\DeclareRobustCommand\widecheck[1]{{\mathpalette\@widecheck{#1}}}
\def\@widecheck#1#2{%
    \setbox\z@\hbox{\m@th$#1#2$}%
    \setbox\tw@\hbox{\m@th$#1%
       \widehat{%
          \vrule\@width\z@\@height\ht\z@
          \vrule\@height\z@\@width\wd\z@}$}%
    \dp\tw@-\ht\z@
    \@tempdima\ht\z@ \advance\@tempdima2\ht\tw@ \divide\@tempdima\thr@@
    \setbox\tw@\hbox{%
       \raise\@tempdima\hbox{\scalebox{1}[-1]{\lower\@tempdima\box
\tw@}}}%
    {\ooalign{\box\tw@ \cr \box\z@}}}
\newcommand{\bi}{\textnormal{\textbf{i}}}  \newcommand{\cB}{\mathcal{B}}  \newcommand{\FF}{\mathbb{F}} \newcommand{\ZZ}{\mathbb{Z}} \newcommand{\NN}{\mathbb{N}} 
  \newcommand{\cV}{\mathcal{V}} \newcommand{\cW}{\mathcal{W}}   \newcommand{\med}{\;|\;}  \newcommand{\cF}{\mathcal{F}}
\DeclareMathOperator{\supp}{\mathrm{Supp}\,} \DeclareMathOperator{\Aut}{\mathrm{Aut}}  \DeclareMathOperator{\Stab}{\mathrm{Stab}}   
\DeclareMathOperator{\End}{\mathrm{End}}   
\DeclareMathOperator{\bAut}{\mathbf{Aut}} \DeclareMathOperator{\bmu}{\boldsymbol{\mu}} 
\DeclareMathOperator{\chr}{\mathrm{char}\,}  \DeclareMathOperator{\lspan}{\mathrm{span}} \DeclareMathOperator{\gl}{\mathfrak{gl}}
  \newcommand{\GL}{\mathrm{GL}}  
\newcommand{\cA}{\mathcal{A}} 
\DeclareMathOperator{\Hom}{\mathrm{Hom}} \DeclareMathOperator{\id}{\mathrm{id}}
 \newcommand{\cM}{\mathcal{M}}
\newcommand{\instr}{{\mathfrak{instr}}}
\newcommand{\Tr}{\mathsf{T}} 
\DeclareMathOperator{\tr}{\textnormal{tr}} 
\DeclareMathOperator{\bGL}{\mathbf{GL}} \DeclareMathOperator{\bSL}{\mathbf{SL}}
\DeclareMathOperator{\bG}{\mathbf{G}}
\DeclareMathOperator{\bT}{\mathbf{T}}
\DeclareMathOperator{\MLSM}{\mathbf{MLSM}} 
\DeclareMathOperator{\MFLSM}{\mathbf{MFLSM}} 
\DeclareMathOperator{\MFLM}{\mathbf{MFLM}} 
\DeclareMathOperator{\MGJSP}{\mathbf{MGJSP}} 
\DeclareMathOperator{\MGJP}{\mathbf{MGJP}} 
\newcommand{\bigboxtimes}{\raisebox{-1.5pt}{\scalebox{1.3}{$\boxtimes$}}}
\DeclareMathOperator{\sgn}{\textnormal{sgn}} 
\DeclareMathOperator{\per}{\textnormal{per}} 
\DeclareMathOperator{\detper}{\textnormal{detper}}
\DeclareMathOperator{\perdet}{\textnormal{perdet}}
\DeclareMathOperator{\diag}{\textnormal{diag}}
\DeclareMathOperator{\altI}{\widehat{\mathcal{I}}}
\DeclareMathOperator{\altB}{\widehat{\mathcal{B}}}
\DeclareMathOperator{\sgnalt}{\widehat{\textnormal{sgn}}}
\DeclareMathOperator{\varsigmahat}{\widehat{\varsigma}}
\DeclareMathOperator{\symI}{\widecheck{\mathcal{I}}}
\DeclareMathOperator{\symB}{\widecheck{\mathcal{B}}}
\DeclareMathOperator{\sgnsym}{\widecheck{\textnormal{sgn}}}
\DeclareMathOperator{\varsigmacheck}{\widecheck{\varsigma}}
\DeclareMathOperator{\bTGS}{\mathbf{1}} 
\DeclareMathOperator{\bker}{\mathbf{ker}} 
\DeclareMathOperator{\adj}{\textnormal{adj}} 
\DeclareMathOperator{\Orb}{\textnormal{Orb}} 
\newtheorem{theorem}{Theorem}
\newtheorem{proposition}[theorem]{Proposition}
\theoremstyle{definition}
\newtheorem{df}[theorem]{Definition}
\newtheorem{dfs}[theorem]{Definitions}
\newtheorem{example}[theorem]{Example}
\newtheorem{notation}[theorem]{Notation}
\newtheorem{remark}[theorem]{Remark}
\newtheorem{remarks}[theorem]{Remarks}
\numberwithin{equation}{section} 
\numberwithin{theorem}{section} 
\begin{document}

\title[Alternating and symmetric superpowers of metric generalized Jordan superpairs]{Alternating and symmetric superpowers \\ of metric generalized Jordan superpairs}

\author[D. Aranda-Orna]{Diego Aranda-Orna}
\address{Departamento de Matem\'{a}ticas,
Universidad de Oviedo, 33007 Oviedo, Spain}
\email{diego.aranda.orna@gmail.com}

\author[A.S. C\'ordova-Mart\'inez]{Alejandra S. C\'ordova-Mart\'inez}
\address{
Departamento de Matem\'{a}tica Aplicada, E.T.S. Ingenier\'ia Inform\'{a}tica, Universidad de M\'{a}laga, 29071 M\'{a}laga, Spain}
\email{acordova@uma.es}

\thanks{Both authors are supported by grant PID2021-123461NB-C21, funded by MCIN/AEI/10.13039/501100011033 and by ``ERDF A way of making Europe''. The second author is also supported by S60\_23R (Gobierno de Arag\'on, Grupo de investigaci\'on ``Investigaci\'on en Educaci\'on Matem\'atica'').}
\date{}

\begin{abstract}
The aim of this paper is to define and study the constructions of alternating and symmetric (super)powers of metric generalized Jordan (super)pairs. These constructions are obtained by transference via the Faulkner construction. The construction of tensor (super)products for metric generalized Jordan (super)pairs is revisited. We always assume that the characteristic of the base field $\FF$ is different from $2$; in case of positive characteristic, sometimes we require that the characteristic is large enough to allow nondegeneracy of certain bilinear forms.
\end{abstract}

\maketitle

\section{Introduction} \label{section.introduction}

For theory of Lie superalgebras, the reader may consult \cite{S79}, \cite{CW12}, \cite{M12}, \cite{FSS00}. The rule of signs of the super case is explained in \cite[\S A.2.2]{M12}. The basics of multilinear algebra can be consulted in \cite{G78}, although some proofs there require characteristic $0$; there, the reader may consult the definition of the permanent of a matrix, and its properties. The dual pairings for alternating (exterior) powers and also for symmetric powers of a finite-dimensional vector space over an arbitrary field can be found in \cite[Appendix A]{K89} (we point out that it is incorrectly stated that the isomorphism in \cite[Corollary A.24(b)]{K89} holds in arbitrary characteristic); we have not found a good and detailed reference about this topic in the super case. Some surveys about metric Lie algebras (also known as quadratic Lie algebras, self-dual Lie algebras, and many other names) are \cite{O16}, \cite{BR23}. For affine group schemes, see \cite{W79} or \cite[Appendix A]{EKmon}. The Faulkner construction was discovered in \cite{F73}, and extended to the super case in \cite{A22}. In \cite{F84}, invariant bilinear forms on generalized Jordan pairs are studied.

A description of good bases of the exterior and symmetric superalgebras of a finite-dimensional vector superspace is well-known, as a certain version of the Poincar\'e-Birkhoff-Witt Theorem (PBW Theorem) \cite[Th. A.2.8]{M12}; we include a different proof of that result in Notations \ref{notation.basis.alternating} and \ref{notation.basis.symmetric} of the present paper, where we deal with bilinear forms; we have not found our approach for the super case in the literature (however, the bilinear forms that extend the determinant and permanent that were studied in \cite{GRS87} seem to be related to the ones in the present paper).

\bigskip

Without further mention, we will always assume that the base field $\FF$ has characteristic different from $2$.
If $\chr\FF > 0$, sometimes we will need to assume that the characteristic is large enough so that certain bilinear forms are nondegenerate. In the present work, as explained in $\S~\ref{subsec.Liesupermodules}$, when we say ``Lie superalgebra'' we always mean ``weak Lie superalgebra''.

\bigskip

This paper is structured as follows:

\smallskip

In $\S\ref{section.preliminaries}$, we will recall some of the most basic definitions used in this work. In $\S\ref{section.modules.tensor}$, the tensor superproduct of Lie supermodules is revisited and some notation is introduced to be used in further sections. In $\S\ref{section.pairs.tensor}$, we focus on metric generalized Jordan superpairs, for which the restricted tensor superpowers are studied, and the tensor superproduct is revisited.

In $\S\ref{section.supermodules.alternating}$ and $\S\ref{section.supermodules.symmetric}$, we recall the basics of alternating and symmetric superpowers of Lie supermodules; although not all results there are original, some results involving dual pairings and superminors are expected to be original (as generalizations of well-known results of the non-super case).

The main goal of this paper is reached in $\S\ref{section.superpairs.alternating}$ and $\S\ref{section.superpairs.symmetric}$, where the Faulkner construction is used to transfer the constructions of alternating and symmetric superpowers to the class of metric generalized Jordan superpairs. We also provide two examples involving decompositions of the simple Jordan pairs of types II and III, which was another of the main motivations of our study.

\bigskip

\noindent\textbf{Motivation:} Let $\cV^\text{(I)}_{p,q}$ denote the simple Jordan pair of type I of parameters $p<q$, with its generic trace $t$. In \cite[Ex.4.7, eq.(4.14)]{A22} it was shown that $\cV^\text{(I)}_{p,q}$ is isomorphic to a tensor-shift of the tensor product $\cV^\text{(I)}_{1,p} \otimes \cV^\text{(I)}_{1,q}$. Let $\cV^\text{(II)}_n$ and $\cV^\text{(III)}_n$ be the simple Jordan pairs of types II and III of parameter $n$, which are Jordan subpairs of $\cV^\text{(I)}_{n,n}$. Note that the vector spaces of $\cV^\text{(I)}_{p,q}$ are both $\cM_{p,q}(\FF)$, the vector spaces of $\cV^\text{(II)}_n$ are given by the antisymmetric matrices $A_n(\FF)$, and the vector spaces of $\cV^\text{(III)}_n$ are given by the symmetric matrices $H_n(\FF)$. Since $\cM_n(\FF) = A_n(\FF) \oplus H_n(\FF)$, we have a decomposition of subpairs (but not ideals) $\cV^\text{(I)}_{n,n} = \cV^\text{(II)}_n \oplus \cV^\text{(III)}_n$. On the other hand, recall from the Schur-Weyl duality (\cite[Chap.3]{CW12}, \cite[Chap.11]{M12}, \cite[\S6]{FH91}) that we have a decomposition of irreducible $\GL_n(\FF) \times S_2$-modules $\cM_n(\FF) \cong \cM_{1,n}(\FF)^{\otimes2} \cong \bigwedge^2 \cM_{1,n}(\FF) \oplus \bigvee^2 \cM_{1,n}(\FF)$ where we can identify $A_n(\FF) \cong \bigwedge^2 \cM_{1,n}(\FF)$ and $H_n(\FF) \cong \bigvee^2 \cM_{1,n}(\FF)$ through the isomorphism. From \cite[Prop.4.3.4) \& eq.(4.14)]{A22} it follows easily that $\big( \GL_n(\FF) \otimes_{\FF^\times} \GL_n(\FF) \big) \rtimes S_2 \leq \Aut(\cV^\text{(I)}_{n,n}, t)$ (because $\Aut(\cV^\text{(I)}_{1,n}, t) \cong \GL_n(\FF)$ and where $S_2$ corresponds to the swapping automorphism of $\cV^\text{(I)}_{1,n} \otimes \cV^\text{(I)}_{1,n}$), where the groups $\GL_n(\FF)$ and $S_2 \cong \ZZ_2$ inducing the Schur-Weyl decomposition appear. Then, it is natural to study which is the relation between the Jordan pairs $\cV^\text{(II)}_n$, $\cV^\text{(III)}_n$ and the Jordan pairs $\bigwedge^2 \cV^\text{(I)}_{1,n}$, $\bigvee^2 \cV^\text{(I)}_{1,n}$. In the present work, we will show that the former ones are isomorphic to the latter, up to a tensor-shift and up to similarity of the bilinear forms.

\bigskip

\noindent\textbf{Notation changes and corrigendum from \cite{A22}:} The classes of objects $\mathbf{GJSP}$, $\mathbf{FLSM}$ will be renamed, with a better notation, as $\MGJSP$, $\MFLSM$. By misuse of language, for bilinear forms, the term ``homogeneous'' was used with the meaning of ``homogeneous of degree $0$'' (in the present paper, we will use the term ``even'' instead). We point out that the definition of Kantor pair was miswritten. We point out that there is a nontrivial detail not explained in \cite[Rem.3.9]{A22}, which is how to identify $L_{-2}$ with the dual Lie module of $L_2$ in a natural way (it will not be explained in the present work). A notational error appearing in the proof of \cite[Prop.4.3]{A22} is also fixed here, see Remark~\ref{erratumNu} below.

\section{Preliminaries} \label{section.preliminaries}

\subsection{Lie supermodules} \label{subsec.Liesupermodules}

Given a group $G$, a $G$-grading on a vector space $V$ is a vector space decomposition $\Gamma:\;V = \bigoplus_{g\in G} V_g $. If $\cA$ is an $\FF$-algebra, then a \emph{$G$-grading} on $\cA$ is a grading on $\cA$ as vector space $$ \Gamma:\;\cA=\bigoplus_{g\in G} \cA_g $$
such that $\cA_g \cA_h\subseteq \cA_{gh}$ for all $g,h\in G$. The subspace $\cA_g$ is called {\em homogeneous component of degree $g$}, any $0 \neq x\in\cA_g$ is called a {\em homogeneous element of degree $g$}, and we write $\deg(x) = g$. The {\em support} of the grading is the set $\supp \Gamma := \{g\in G \med \cA_g\neq 0\}$.

Recall that if $V$,$W$ are $G$-graded vector spaces and $f \colon V \to W$ is a nonzero linear map, then $f$ is called \emph{homogeneous of degree $g$} if $f(V_h) \subseteq W_{gh}$ for each $h\in G$; if $V$ and $W$ are finite-dimensional, then $\Hom(V,W)$ is a $G$-graded vector space, and in particular, $\End(V)$ becomes a $G$-graded algebra.

\medskip

A \emph{vector superspace} is a $\ZZ_2$-graded vector space $V = V_{\bar0} \oplus V_{\bar1}$. The subspaces $V_{\bar0}$ and $V_{\bar1}$ are called the \emph{even} and \emph{odd} components, and their nonzero elements are called, respectively, \emph{even} and \emph{odd} elements. The degree map is given by $\varepsilon(x) := a$ if $0\neq x\in V_a$, and is called the \emph{parity} map. A homogeneous subset $\{v_i\}_{i=1}^m$ of $V$ will be said to be \emph{parity-ordered} if there is some $0\leq k\leq m$ such that $v_i$ is even for $i \leq k$ and odd for $i > k$. A \emph{superalgebra} is a $\ZZ_2$-graded algebra $\cA = \cA_{\bar0} \oplus \cA_{\bar1}$. 

Given a finite-dimensional vector superspace $V$, let $\cB$ be a parity-ordered basis of $V$. Then, taking coordinates in $\cB$, each $h \in \End(V)$ corresponds to a supermatrix
$M = \left( \begin{array}{c|c} M_{\bar0\bar0} & M_{\bar0\bar1} \\
\hline M_{\bar1\bar0} & M_{\bar1\bar1}\end{array} \right)$.
In the case that $h$ is even we have $M_{\bar0\bar1} = (0) = M_{\bar1\bar0}$, and we may denote $M_{\bar0} := M_{\bar0\bar0}$, $M_{\bar1} := M_{\bar1\bar1}$, so that $M = \diag(M_{\bar0}, M_{\bar1})$.
The superalgebra of supermatrices is denoted by $\cM_{(m|n)}(\FF)$, where $m = \dim V_{\bar 0}$, $n = \dim V_{\bar 1}$.

If $R$ is an associative, commutative, unital $\FF$-algebra, we will denote by $V_R := V \otimes R$ the corresponding scalar extension. The group of even invertible linear maps $V \to V$ will be denoted by $\GL^{\bar0}(V)$, instead of $\GL(V)$, to avoid ambiguity with the automorphisms of $V$ regarded as a vector space.

\bigskip

For homogeneous elements in a $\ZZ_2$-grading, we will denote
\begin{align}
\eta_{x,y} &:= (-1)^{\varepsilon(x)\varepsilon(y)}, \\
\eta_{x,y,z} &:= (-1)^{\varepsilon(x)\varepsilon(y)
+ \varepsilon(y)\varepsilon(z) + \varepsilon(z)\varepsilon(x)}.
\end{align}

\smallskip

Recall that a \emph{weak Lie superalgebra} (see \cite[Remark~3.1.1]{K24}) is a superalgebra $L = L_{\bar0} \oplus L_{\bar1}$, with product denoted by $[\cdot,\cdot]$, such that 
\begin{align}
[x, y] &= - \eta_{x,y} [y,x],  \\
[x,[y,z]] &= [[x,y],z] + \eta_{x,y} [y,[x,z]],
\end{align}
for any homogeneous elements $x,y,z\in L$. If $\chr\FF \neq 2,3$, the definition of \emph{Lie superalgebra} coincides with the definition of weak Lie superalgebra, and otherwise some additional conditions are required. In particular, if $\chr\FF = 3$, a Lie superalgebra is a weak Lie superalgebra such that $[x,[x,x]] = 0$ for any odd element $x$. (Since we always assume $\chr\FF \neq 2$, we are not concerned with the additional conditions required for characteristic $2$.) By abuse of terminology in the present paper, when we say ``Lie superalgebra'' we will always mean ``weak Lie superalgebra''.

Recall that if $\cA$ is an associative superalgebra, then $\cA$ becomes a Lie superalgebra with the Lie \emph{superbracket} 
\begin{equation} \label{superbracketAssociative}
[x,y] := xy - \eta_{x,y} yx
\end{equation}
for any homogeneous elements $x,y\in\cA$. In particular, if $V$ is a vector superspace, then $\End(V)$ becomes a Lie superalgebra with the Lie superbracket, which is denoted by $\gl(V_{\bar0}|V_{\bar1})$; similarly, the Lie superalgebra of supermatrices $\cM_{(m|n)}(\FF)$ with the Lie superbracket is denoted by $\gl(m|n)$.

Given a superalgebra $\cA$, a \emph{superderivation of degree $a$} is a homogeneous linear map $d \colon \cA \to \cA$ of degree $a\in\ZZ_2$ such that
\begin{equation}
d(xy) = d(x)y + \eta_{d,x} x d(y)
\end{equation}
for any homogeneous $x,y\in\cA$, where we denote $\varepsilon(d) := a$. The vector space of superderivations of $\cA$ is a Lie subsuperalgebra of $\gl(m|n)$, where $m$ and $n$ are respectively the even and odd dimensions of $\cA$.

\bigskip

Let $L$ be a Lie superalgebra. An \emph{$L$-supermodule} is a vector superspace $M = M_{\bar0} \oplus M_{\bar1}$ with a bilinear map $L \times M \to M$, $(x, v) \mapsto x \cdot v$, 
such that $L_a \cdot M_b \subseteq M_{a+b}$ for any $a,b\in\ZZ_2$ and
\begin{equation} \label{supermodule}
[x, y] \cdot v = x \cdot (y \cdot v) - \eta_{x,y} y \cdot (x \cdot v)
\end{equation}
for any homogeneous $x,y\in L$, $v\in M$.

\bigskip

Let $L$ be a Lie superalgebra and $b \colon L \times L \to \FF$ a bilinear form. Then we will say that $b$ is \emph{homogeneous of degree $0$}, or \emph{even}, if $b(x,y) = 0$ for any homogeneous elements $x,y\in L$ with $\varepsilon(x) + \varepsilon(y) \neq \bar0$. On the other hand, if 
\begin{equation} \label{invariant}
b([x,y],z) = b(x,[y,z])
\end{equation}
for any $x,y,z\in L$, then $b$ is called \emph{invariant}. Besides, if
\begin{equation} \label{supersymmetric_b}
b(x,y) = \eta_{x,y} b(y,x)
\end{equation}
for any homogeneous elements $x,y\in L$, then we say that $b$ is \emph{supersymmetric}.

\bigskip

Let $L$ be a finite-dimensional Lie superalgebra and $M$ a finite-dimensional $L$-supermodule. Recall that the dual space $M^*$ inherits a dual $\ZZ_2$-grading such that the duality bilinear form is even. We will usually denote the dual pairing by $\langle\cdot,\cdot\rangle \colon M^* \times M \to \FF$. The \emph{dual} (or \emph{left-dual}) \emph{$L$-supermodule} of $M$ is the $\ZZ_2$-graded vector space $M^*$ with the dual action $x \cdot f$ given by
\begin{equation} \label{dualityAction}
\langle x \cdot f, v \rangle =
(x \cdot f)(v) := -\eta_{x,f} f(x\cdot v)
= -\eta_{x,f} \langle f, x\cdot v \rangle
\end{equation}
for any homogeneous $x\in L$, $f\in M^*$, $v\in M$. We will denote it as $M^\gets$ or $M^*$ (there is also a notion of right-dual supermodule). 

Given a homogeneous $\varphi\in\End(M)$, its \emph{dual} (or \emph{left-dual}) map, denoted by $\varphi^*$ (or $\varphi^\gets$), is defined by
\begin{equation}
\langle \varphi^\gets(f), v \rangle = \eta_{\varphi, f} \langle f, \varphi(v) \rangle,
\end{equation}
for any homogeneous $f\in M^*$, $v\in M$. There is an analogous notion of \emph{right-dual}, denoted $\varphi^\to$.

\bigskip

For a Lie supermodule $(L,M)$, define
\begin{equation}
\Aut(L,M) := \{ (\varphi, h) \in \Aut(L) \times \GL^{\bar0}(M) \med
h(x \cdot v) = \varphi(x) \cdot h(v) \; \forall x\in L, \; v\in M\}.
\end{equation}
The automorphism group scheme $\bAut(L,M)$ is defined similarly.

\subsection{Generalized Jordan superpairs}

Recall that a \emph{trilinear pair} is a pair of vector spaces $\cV = (\cV^-, \cV^+)$ with a pair of trilinear maps $\{\cdot,\cdot,\cdot\}^\sigma \colon \cV^\sigma \times \cV^{-\sigma} \times \cV^\sigma \to \cV^\sigma$, $\sigma \in \{+, -\}$. Denote
\begin{equation} 
D^\sigma_{x,y}(z) := \{x,y,z\}^\sigma
\end{equation}
for $x,z\in \cV^\sigma$, $y\in \cV^{-\sigma}$, $\sigma = \pm$. The superscript $\sigma$ may be omitted for short.

Let $G$ be an abelian group and $\cV$ a trilinear pair; consider two decompositions of vector spaces $\Gamma^{\sigma} \colon \cV^\sigma = \bigoplus_{g\in G} \cV_g^{\sigma}$; then we will say that $\Gamma=(\Gamma^-,\Gamma^+)$ is a {\em $G$-grading on $\cV$} if $\lbrace \cV^{\sigma}_g, \cV^{-\sigma}_h, \cV^{\sigma}_k \rbrace \subseteq \cV^{\sigma}_{g+h+k}$ for any $g,h,k\in G$ and $\sigma\in \lbrace +,- \rbrace$. The vector space $\cV^-_g \oplus \cV^+_g$ is called the {\em homogeneous component of degree} $g$. If $0\neq x\in \cV^{\sigma}_g$ we say $x$ is {\em homogeneous of degree $g$}. For homogeneous elements $x,y$ of degrees $g,h$, the map $D^\sigma_{x,y}$ is homogeneous of degree $g+h$.

\bigskip

A \emph{trilinear superpair} is just a $\ZZ_2$-graded trilinear pair $\cV = (\cV^-, \cV^+)$; in particular, $\cV^-$ and $\cV^+$ are vector superspaces. A \emph{generalized Jordan superpair} is a trilinear superpair $\cV$ where we have that
\begin{equation} \label{defGJSP}
[D^\sigma_{x,y}, D^\sigma_{z,w}] = 
D^\sigma_{D^\sigma_{x,y}z, w} - \eta_{x,y,z} D^\sigma_{z, D^{-\sigma}_{y,x}w}.
\end{equation}
for any homogeneous $x,z\in \cV^\sigma$, $y,w\in \cV^{-\sigma}$, and $\sigma = \pm$ (here $[\cdot,\cdot]$ denotes the Lie superbracket in \eqref{superbracketAssociative}).

\bigskip

Let $\cV$ be a generalized Jordan superpair, $D = (D^-, D^+) \in \End(\cV^-) \times \End(\cV^+)$, and fix $a\in\ZZ_2$. We say that $D$ is a \emph{superderivation of degree $a$} of $\cV$, of parity $\varepsilon(D) := a$, if $D^\sigma \cV^\sigma_b \subseteq \cV^\sigma_{a+b}$ for any $\sigma=\pm$, $b\in\ZZ_2$, and
\begin{equation} \label{derivGJSPdef1}
D^\sigma(\{x,y,z\}) = \{D^\sigma(x),y,z\} + \eta_{D, x} \{x,D^{-\sigma}(y),z\}
+ \eta_{D, D_{x,y}} \{x,y,D^\sigma(z)\}
\end{equation}
for any homogeneous $x,z\in\cV^\sigma$, $y\in\cV^{-\sigma}$.

\bigskip

Given a generalized Jordan superpair $\cV$ and homogeneous elements $x\in\cV^-$, $y\in\cV^+$, denote
\begin{equation}
\nu(x,y) := (D^-_{x,y}, - \eta_{x,y} D^+_{y,x}) \in\End(\cV^-) \times \End(\cV^+),
\end{equation}
and \begin{equation} \label{nuAntisymmetry}
\nu(y, x) := - \eta_{x,y} \nu(x, y).
\end{equation}
Write $\cV_{a} := \cV^-_a \oplus \cV^+_a$ for $a\in\ZZ_2$. Recall that the \emph{inner structure (Lie) superalgebra} of a generalized Jordan superpair is the Lie superalgebra
\begin{equation}
\instr(\cV) := \lspan\{ \nu(x,y) \med x\in\cV^-, y\in\cV^+ \} \leq \gl(\cV_{\bar0} | \cV_{\bar1}),
\end{equation}
and its elements are superderivations called \emph{inner superderivations} of $\cV$.

\bigskip

Let $\cV = (\cV^-, \cV^+)$ be a generalized Jordan superpair with a bilinear form $\langle\cdot,\cdot\rangle \colon \cV^- \times \cV^+ \to \FF$. We say that $\langle\cdot,\cdot\rangle$ is \emph{superinvariant} (or \emph{left-superinvariant}) if
\begin{equation} \label{superinvariant}
\langle D_{x,y}z, w \rangle = \eta_{x,y,z} \langle z, D_{y,x}w \rangle
\end{equation}
for any homogeneous elements $x,z\in\cV^-$, $y,w\in\cV^+$.
If $\langle x, y \rangle = 0$ for any homogeneous $x\in\cV^-$, $y\in\cV^+$ such that $\varepsilon(x) \neq \varepsilon(y)$, then the bilinear form is said to be \emph{homogeneous of degree $0$} or \emph{even}. We say that $\langle\cdot,\cdot\rangle$ is \emph{supersymmetric} (or \emph{left-supersymmetric}), if we have
\begin{equation}\label{symmetricForm}
\begin{split}
\langle D_{x,y}z, w \rangle &= \eta_{D_{x,y}, D_{z,w}} \langle D_{z,w}x, y \rangle, \\
\langle x, D_{y,z}w \rangle &= \eta_{D_{x,y}, D_{z,w}} \langle z, D_{w,x}y \rangle
\end{split}
\end{equation}
for any homogeneous $x,z\in\cV^-$, $y,w\in\cV^+$.

\bigskip

A \emph{homomorphism} $\varphi \colon \cV \to \cW$ of trilinear superpairs (in particular, for generalized Jordan superpairs) is a pair of even linear maps $\varphi = (\varphi^-, \varphi^+)$, with $\varphi^\sigma \colon \cV^\sigma \to \cW^\sigma$, such that $\varphi^\sigma(\{x,y,z\}^\sigma) = \{ \varphi^\sigma(x), \varphi^{-\sigma}(y), \varphi^\sigma(z) \}^\sigma$ for any $x,z\in\cV^\sigma$, $y\in\cV^{-\sigma}$, $\sigma = \pm$. The automorphism group and the automorphism group scheme of $\cV$ will be denoted, respectively, as $\Aut(\cV)$ and $\bAut(\cV)$. Note that $\bAut(\cV)(R) := \Aut_R(\cV_R)$ where we denote $\cV_R := (\cV^-_R, \cV^+_R)$, where $\cV^\sigma_R := \cV^\sigma \otimes R$.

\subsection{Faulkner construction} \hfill\\
Now we will recall some facts that we need from the Faulkner construction \cite[Lemma~1.1]{F73}, \cite[Prop.3.2]{A22}, and the Faulkner correspondence \cite[Th.3.6]{A22}.

\begin{notation}
If $L$ is a Lie superalgebra and $b \colon L \times L \to \FF$ is a nondegenerate even invariant supersymmetric bilinear form, then we will say that $(L,b)$ is a \emph{metric Lie superalgebra}. If $(L,M)$ is a Lie supermodule and $(L,b)$ is a metric Lie superalgebra, then we will say that $(L,M,b)$ is a \emph{metric Lie supermodule}. Let $\MLSM$ denote the class of finite-dimensional metric Lie supermodules. We will also denote by $\MFLSM$ (or $\MFLM$ for the non-super case) the subclass of elements $(L,M,b)$ of $\MLSM$ such that $(L,M)$ is faithful. The subgroup of elements of $\Aut(L,M)$ preserving the bilinear form $b$ will denoted by $\Aut(L,M,b)$, and similarly we can consider the subgroup scheme $\bAut(L, M, b) \leq \bAut(L, M)$.

If $\cV$ is a generalized Jordan superpair and $\langle\cdot,\cdot\rangle \colon \cV^- \times \cV^+ \to \FF$ is a nondegenerate even superinvariant supersymmetric bilinear form, then we will say that $(\cV, \langle\cdot,\cdot\rangle)$ is a \emph{metric generalized Jordan superpair}. We will denote by $\MGJSP$ (or $\MGJP$ for the non-super case) the class of finite-dimensional metric generalized Jordan superpairs. The subgroup of elements of $\Aut(\cV)$ preserving the bilinear form $\langle\cdot,\cdot\rangle$ will be denoted by $\Aut(\cV, \langle\cdot,\cdot\rangle)$, and similarly we can consider the subgroup scheme $\bAut(\cV, \langle\cdot,\cdot\rangle) \leq \bAut(\cV)$.

We recall that the Faulkner correspondence gives a bijective correspondence (for the isomorphism classes) between $\MFLSM$ and $\MGJSP$ (which restricts to a bijection between $\MFLM$ and $\MGJP$); also, for corresponding objects we have $\bAut(L,M,b) \simeq \bAut(\cV, \langle\cdot,\cdot\rangle)$. On the other hand, the Faulkner construction sends each object in $\MLSM$ (the supermodules are not required to be faithful) to another object in $\MGJSP$.
\end{notation}

\begin{notation}
Given $(L,M,b) \in \MLSM$, the Faulkner construction produces an object $(\cV, \langle\cdot,\cdot\rangle) \in \MGJSP$, defined as follows. The vector superspaces are given by $\cV = \cV_{L,M} := (M^*, M)$. The bilinear form is just the dual pairing of Lie supermodules $\langle\cdot,\cdot\rangle \colon M^* \times M \to \FF$, $(f,v) \mapsto \langle f,v \rangle$, and we will use the convention $\langle v, f \rangle = \eta_{f, v}\langle f,v \rangle$. By nondegeneracy of $b$, a term $[f, v]\in L$ is defined for each $v\in M$, $f\in M^*$, if we impose
\begin{equation} \label{bilinearFormsCorrespondence}
b\big(x, [f, v] \big) = \langle x \cdot f, v \rangle,
\end{equation}
for any $x \in L$, $v \in M = \cV^+$, $f \in M^* = \cV^-$. Similarly we define $[v, f]$, which satisfies $[v,f] = -\eta_{f,v}[f,v]$. Then the triple products of $\cV$ are defined by
\begin{equation} \label{action.tripleproducts.correspondence}
\{f,v,g\}^- := [f, v] \cdot g, \qquad \{v,f,w\}^+ := [v, f] \cdot w,
\end{equation}
for homogeneous $v,w \in M = \cV^+$, $f,g \in M^* = \cV^-$. The \emph{inner structure algebra} of $(L,M,b)$, denoted $\instr(L,M)$, is the Lie subsuperalgebra of $L$ spanned by the elements of the form $[f,v]$. Also, the map 
\begin{equation}\begin{split} \label{epimorphismFaulkner}
\Upsilon \colon \instr(L, M) &\longrightarrow \instr(\cV_{L,M}) \leq \gl(M^* \oplus M), \\
\quad [f,v] &\longmapsto \nu(f,v) := (D_{f,v}, - \eta_{f,v} D_{v,f}).
\end{split}\end{equation}
defines an epimorphism of Lie superalgebras. If $(L,M)$ is faithful, then $L = \instr(L, M) \cong \instr(\cV_{L,M})$ (see \cite[Prop.3.3]{A22}).
\end{notation}

\begin{remark} \label{erratumNu}
In the present paper, we will use the Faulkner construction (but not the correspondence) to transfer the definitions of alternating and symmetric superpowers. In the proof of \cite[Prop.4.3-2)]{A22}, there is a notational error where terms of the form $[f,v]$ should appear instead of $\nu(f,v)$, inside the bilinear form $b$, and it is also necessary to apply the epimorphism $\Upsilon$ in \eqref{epimorphismFaulkner} at the end of the proof; we will revisit that result in Prop.\ref{generalTensorProductPairs} below.
\end{remark}

\begin{notation}
We will denote by $\bG_m := \bGL_1$ the multiplicative group scheme and by $\bmu_n$ the group scheme of the $n$-th roots of unity. Fix $\cV, \cW \in \MGJSP$. Recall that for each $\lambda \in R^\times$ there is an automorphism $c_\lambda = (c_\lambda^-, c_\lambda^+) \in \Aut_R(\cV_R)$ defined by $c_\lambda^\sigma(x) := \lambda^{\sigma 1}x$ for $x\in\cV^\sigma_R$, and we can identify $\bG_m(R) = R^\times \simeq \{c_\lambda \med \lambda \in R^\times\} \leq \bAut(\cV)(R) = \Aut_R(\cV_R)$. Thus we can consider the central product relative to $\bG_m$ (see definition for groups in \cite[Chap.2, p.29]{G80})
\begin{equation}
\bAut(\cV) \otimes_{\bG_m} \bAut(\cW) := \big(\bAut(\cV) \times \bAut(\cW)\big)/\bT_1,
\end{equation}
where $\bT_1(R) := \{(c_\lambda, c^{-1}_\lambda) \in \Aut_R(\cV_R) \times \Aut_R(\cW_R) \med
\lambda \in R^\times \}$.
\end{notation}

\begin{remark}
Let the map $\cF \colon \MLSM \longrightarrow \MGJSP$ denote the Faulkner construction. The Faulkner correspondence (bijection for isomorphism classes) is given by
\begin{align*}
\widetilde{\cF} := \cF|_{\MFLSM} \colon \MFLSM \longrightarrow \MGJSP.
\end{align*}
Let $\kappa := \widetilde{\cF}^{-1} \circ \cF \colon \MLSM \longrightarrow \MFLSM$, thus
\begin{align} \label{eqFaulknerTransfer}
\widetilde{\cF} \circ \kappa = \cF.
\end{align}
Any $n$-ary operator $\theta \colon \MLSM^n \longrightarrow \MLSM$ induces a map for faithful supermodules,
\begin{align}
\theta_F := \kappa \circ \theta \colon \MFLSM^n \longrightarrow \MFLSM,
\end{align}
which can be transferred through $\widetilde{\cF}$ to a map
\begin{align}
\widetilde{\theta} := \widetilde{\cF} \circ \theta_F
  \circ (\widetilde{\cF}^{-1} \times \ldots \times \widetilde{\cF}^{-1})
\colon \MGJSP^n \longrightarrow \MGJSP.
\end{align}
From \eqref{eqFaulknerTransfer}, we get that
\begin{align}
\widetilde{\theta} = \cF \circ \theta \circ (\widetilde{\cF}^{-1} \times \ldots \times \widetilde{\cF}^{-1}),
\end{align}
which is used in further sections to transfer the definitions of tensors from $\MLSM$ to $\MGJSP$.
\end{remark}

\begin{notation}
Recall from \cite[Notation~4.6]{A22} that each $1$-dimensional object in $\MGJSP$ is determined uniquely by a parameter $\alpha = (\lambda, a)\in G := \FF \times \ZZ_2$ (where $a$ corresponds to the parity), thus we can denote it by $\cV_\alpha$. Given $\cV \in \MGJSP$, the tensor superproduct $\cV^{[\alpha]} := \cV \otimes \cV_\alpha$ in $\MGJSP$ will be referred to as a \emph{tensor-shift by $\alpha$} of $\cV$. Recall also that, up to isomorphism, we can identify $\cV^{[\alpha]}$ with the vector superspaces of $\cV$, with shifted degrees $\varepsilon_{[\alpha]}(x) := \varepsilon(x) + a$, with the shifted metric
\begin{equation} \label{general-shifted-metrics}
\begin{split}
& \langle x^+,y^- \rangle_{[\alpha]}^+ := \eta_{a,x} \langle x^+,y^- \rangle^+ = \eta_{a,y} \langle x^+,y^- \rangle^+, \\
& \langle x^-,y^+ \rangle_{[\alpha]}^- := \eta_a\eta_{a,x} \langle x^-,y^+ \rangle^- = \eta_a\eta_{a,y} \langle x^-,y^+ \rangle^-,
\end{split}
\end{equation}
and with shifted triple products given by
\begin{equation} \label{general-shifted-products} \begin{split}
\{ x, y, z \}^+_{[\alpha]} &:= \eta_{a,y} (\{ x, y, z \}^+ + \lambda \langle x, y \rangle z), \\
\{ x, y, z \}^-_{[\alpha]} &:= \eta_a \eta_{a,y} (\{ x, y, z \}^- + \lambda \langle x, y \rangle z),
\end{split} \end{equation}
where $\eta_{a,y} := (-1)^{a \varepsilon(y)}$, $\eta_a := (-1)^a$. In the non-super case, we take $\alpha \equiv \lambda \in \FF$.

Note that for the metric in \eqref{general-shifted-metrics} we are using the notation $\langle x,y \rangle^- := \langle x,y \rangle$ and $\langle y,x \rangle^+ := \eta_{x,y} \langle x,y \rangle^-$ to distinguish both maps of the metric (which arise by ``the rule of signs'' in the Faulkner construction).
\end{notation}

\section{Tensor superproducts of Lie supermodules} \label{section.modules.tensor} \hfill

For $i = 1,\dots,n$, let $M_i$ be an $L_i$-supermodule for a Lie superalgebra $L_i$. We may write $\otimes_i v_i$ to denote a pure tensor $v_1 \otimes \dots \otimes v_n \in\bigotimes_{i=1}^n M_i$. Consider the trivial action of $L_i$ on $M_j$ for $i \neq j$. Then, it is well-known that the pair $(\bigoplus_{i=1}^n L_i, \bigotimes_{i=1}^n M_i)$ defines a Lie supermodule with the action determined by
\begin{equation} \label{def.supermodule}
x \cdot (\otimes_i v_i) := \sum_{i=1}^n \Bigl( \prod_{k < i} \eta_{x, v_k} \Bigr)
v_1 \otimes \dots \otimes (x \cdot v_i) \otimes \dots \otimes v_n
\end{equation}
for each homogeneous $x \in \bigoplus_{i=1}^n L_i$, $v_i \in M_i$, where the parity map is given by $\varepsilon(\otimes_i v_i) := \sum_{i=1}^n \varepsilon(v_i)$. We will refer to $(\bigoplus_{i=1}^n L_i, \bigotimes_{i=1}^n M_i)$ as the \emph{(general) tensor superproduct} of the Lie supermodules $(L_i, M_i)$. In particular, we can consider the \emph{$n$-th (general) tensor superpower} $(\bigoplus_{i=1}^n L, \bigotimes^n M)$ of a Lie supermodule $(L,M)$.

Consider now the case where $L_i = L$ are the same Lie superalgebra. It is also well-known that eq.\eqref{def.supermodule}, for homogeneous $x\in L$ and $v_i \in M_i$, defines an $L$-supermodule on $\bigotimes_{i=1}^n M_i$, where the parity is defined again by $\varepsilon(\otimes_i v_i) := \sum_{i=1}^n \varepsilon(v_i)$. When this action is considered, we will denote $\bigboxtimes_{i=1}^n M_i := \bigotimes_{i=1}^n M_i$, and the pair $(L, \bigboxtimes_{i=1}^n M_i)$ will be referred to as the \emph{restricted tensor superproduct} of the Lie supermodules $(L, M_i)$. In particular, we can consider the \emph{$n$-th restricted tensor superpower} $(L, \bigboxtimes^n M)$ of a Lie supermodule $(L,M)$. The aim of the ``$\bigboxtimes$'' notation is just to avoid ambiguity with the general tensor superproduct, and for dealing with vector superspaces it is unnecessary.
Note that for a general tensor superproduct $(\bigoplus_{i=1}^n L, \bigotimes_{i=1}^n M_i)$ and the diagonal Lie subsuperalgebra
\begin{equation} \label{diagonal.restriction}
\widetilde{L} = \text{diag}\Bigl(\bigoplus_{i=1}^n L \Bigr)
:= \{(x,\dots,x) \med x\in L\} \leq \bigoplus_{i=1}^n L,
\end{equation}
the restricted action defines a Lie supermodule $(\widetilde{L}, \bigotimes_{i=1}^n M_i)$ which is isomorphic to the restricted tensor superproduct $(L, \bigboxtimes_{i=1}^n M_i)$, which follows from the isomorphism $\varphi \colon L \to \widetilde{L}$, $x \mapsto (x,\dots,x)$. In particular, for objects $(L,M_i,b) \in \MLSM$, we have $(\widetilde{L}, \bigotimes_{i=1}^n M_i, \widetilde{b}) \cong (L, \bigboxtimes_{i=1}^n M_i, nb)$ with $\widetilde{b} = \perp_{i=1}^n b$, because $\widetilde{b}(\varphi(x), \varphi(y)) = nb(x,y)$.
However, the pairs of the last isomorphism are metric only if $\chr\FF$ does not divide $n$ (which ensures nondegeneracy of the bilinear forms $\widetilde{b}$ and $nb$).
The binary operators $\boxtimes$ and $\otimes$ are both associative. (Some authors refer to $\otimes$ and $\boxtimes$ as outer and inner tensor products, and denote them with different notation \cite{M12}.)

Given Lie supermodules $(L_i, M_i)$, for $i = 1,\dots,n$, the Lie supermodule $(\bigoplus_{i=1}^n L_i, \bigoplus_{i=1}^n M_i)$ is called their \emph{direct sum}, where we consider the trivial action of $L_i$ on $M_j$ for $i\neq j$. Similarly, the direct sum of objects $(L_i,M_i,b_i) \in \MLSM$ is defined by $(\bigoplus_{i=1}^n L_i, \bigoplus_{i=1}^n M_i, \perp_{i=1}^n b_i)$. 

\medskip

\begin{remarks} \label{remarks.referee} \hfill
\begin{itemize}
\item[1)]
Recall that for vector superspaces $V$, $W$, we have the braiding of superspaces $c_{V,W} \colon V \otimes W \to W \otimes V$, $v \otimes w \mapsto \eta_{v,w} w \otimes v$. This determines natural isomorphisms (by permutation of the components and the ``rule of signs'')
\begin{align} \label{permutation.isomorphism}
c^\sigma_{V_1,\ldots,V_n} \colon V_1 \otimes \cdots \otimes V_n &\longrightarrow V_{\sigma^{-1}(1)} \otimes \cdots \otimes V_{\sigma^{-1}(n)},
\end{align}
for any vector superspaces $V_1, \ldots, V_n$ and $\sigma \in S_n$, where the composition of isomorphisms corresponds to the composition of permutations; in particular this defines an action of $S_n$ on $V^{\otimes n}$. Note that if $V, W$ are $L$-supermodules for some Lie superalgebra $L$, then $f = c_{V,W}$ is a supermodule isomorphism, because
\begin{align*}
& f\big(x \cdot (v \otimes w)\big) = 
f\big((x \cdot v) \otimes w + \eta_{x,v} v \otimes (x \cdot w)\big) \\
&\quad= \eta_{x,w} \eta_{v,w} w \otimes (x \cdot v)
+ \eta_{v,w} (x \cdot w) \otimes v
= \eta_{v,w} x \cdot (w \otimes v)
= x \cdot f(v \otimes w),
\end{align*} 
for each homogeneous $x \in L$, $v \in V$, $w \in W$. Consequently, the natural maps $c^\sigma_{V_1,\ldots,V_n}$ are supermodule isomorphisms.

\item[2)]
Given an $L$-supermodule $M$ over a Lie superalgebra $L$, since the bilinear pairing $M^* \times M \to \FF$, $(f,v) \mapsto f(v)$, is a pairing of Lie supermodules, it follows that the evaluation map
\begin{align}
\text{eval} \colon M^* \otimes M \longrightarrow \FF, \qquad f \otimes v \longmapsto \langle f,v \rangle := f(v),
\end{align}
is a homomorphism of $L$-supermodules (with the trivial action on $\FF$). Indeed, for each homogeneous $x \in L$, $v \in M$, $f \in M^*$, we have
\begin{align*}
& \text{eval}\big(x \cdot (f \otimes v)\big)
= \text{eval}\big( (x \cdot f) \otimes v + \eta_{x,f} f \otimes (x \cdot v) \big)
= \langle x \cdot f, v \rangle + \eta_{x,f} \langle f, x \cdot v \rangle \\
&\quad= -\eta_{x,f} \langle f, x \cdot v \rangle + \eta_{x,f} \langle f, x \cdot v \rangle
= 0 = x \cdot \text{eval}(f \otimes v).
\end{align*}
\end{itemize}
\end{remarks}

\smallskip

The following result is well-known, but the authors have not found a good reference.

\begin{proposition} \label{prop.tensor.pairing} \hfill
\begin{itemize}
\item[1)] For $i=1,\dots,n$, let $L_i$ be a Lie superalgebra and $M_i$ a finite-dimensional $L_i$-supermodule, and consider the dual pairings of $L_i$-supermodules $\langle\cdot,\cdot\rangle \colon M_i^* \times M_i \to \FF$. Then the bilinear form $\langle \cdot,\cdot \rangle \colon \bigotimes_{i=1}^n M_i^* \times \bigotimes_{i=1}^n M_i \to \FF$ defined by
\begin{equation} \label{eq.tensor.superproduct}
\langle \otimes_i f_i, \otimes_j v_j \rangle
:= \Bigl( \prod_{1 \leq j < i \leq n} \eta_{f_i, v_j} \Bigr)
\Bigl( \prod_{i=1}^n \langle f_i, v_i \rangle \Bigr),
\end{equation}
produces a dual pairing of $\widetilde{L}$-supermodules for the (general) tensor superproduct, where $\widetilde{L} := \bigoplus_{i=1}^n L_i$. Consequently, $\langle\cdot,\cdot\rangle$ defines an isomorphism of $\widetilde{L}$-supermodules $\bigotimes_{i=1}^n M_i^* \cong (\bigotimes_{i=1}^n M_i)^*$, $f \mapsto \langle f,\cdot\rangle$.

\item[2)] Let $L$ be a Lie superalgebra. For $i=1,\dots,n$, let $M_i$ be a finite-dimensional $L$-supermodule, and consider the dual pairings of $L$-supermodules $\langle\cdot,\cdot\rangle \colon M_i^* \times M_i \to \FF$. Then the bilinear form $\langle \cdot,\cdot \rangle \colon \bigboxtimes_{i=1}^n M_i^* \times \bigboxtimes_{i=1}^n M_i \to \FF$ defined by the formula \eqref{eq.tensor.superproduct} produces a dual pairing of $L$-supermodules for the restricted tensor superproduct. Consequently, $\langle\cdot,\cdot\rangle$ defines an isomorphism of $L$-supermodules $\bigboxtimes_{i=1}^n M_i^* \cong (\bigboxtimes_{i=1}^n M_i)^*$, $f \mapsto \langle f,\cdot\rangle$.
\end{itemize}
\end{proposition}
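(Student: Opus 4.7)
The plan is to prove part~1) directly and deduce part~2) from the diagonal identification already recorded in the paragraph preceding the statement: the map $\varphi\colon L \to \widetilde{L}$, $x\mapsto(x,\dots,x)$, identifies $(L, \bigboxtimes_{i=1}^n M_i)$ with $(\widetilde{L}, \bigotimes_{i=1}^n M_i)$ as Lie supermodules, and the formula \eqref{eq.tensor.superproduct} takes the same shape in both settings, so any $\widetilde{L}$-dual pairing restricts along $\varphi$ to an $L$-dual pairing. For part~1), there are two items to verify: (i) nondegeneracy of $\langle\cdot,\cdot\rangle$, and (ii) the dual pairing condition $\langle x\cdot F, V\rangle = -\eta_{x,F}\langle F, x\cdot V\rangle$ for homogeneous $F = \otimes_i f_i \in \bigotimes_i M_i^*$, $V = \otimes_j v_j \in \bigotimes_j M_j$, and $x \in \widetilde{L}$.

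Nondegeneracy is straightforward. Fix parity-ordered bases of each $M_i$ together with their dual bases of $M_i^*$; the corresponding pure tensors give homogeneous bases of $\bigotimes_i M_i^*$ and $\bigotimes_i M_i$. By \eqref{eq.tensor.superproduct}, $\langle F, V\rangle$ vanishes on such a pair of basis elements unless each $v_i$ is the basis vector dual to $f_i$, in which case it equals a product of $\pm 1$ signs; hence the Gram matrix is, up to a permutation of its rows, diagonal with all entries $\pm 1$. So $f\mapsto\langle f,\cdot\rangle$ is injective, and by comparing dimensions it will be an isomorphism $\bigotimes_i M_i^* \cong (\bigotimes_i M_i)^*$ once (ii) is established.

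For (ii), by bilinearity in $x$ one reduces to the case $x \in L_i$ for some fixed $i$, so $x$ acts only on the $i$-th tensor factor. Substituting \eqref{def.supermodule} into \eqref{eq.tensor.superproduct}, both sides become $\prod_{k\neq i}\langle f_k, v_k\rangle$ multiplied by $\langle x\cdot f_i, v_i\rangle$ (on the left) or $\langle f_i, x\cdot v_i\rangle$ (on the right), each times a product of sign factors. Applying the individual dual pairing identity $\langle x\cdot f_i, v_i\rangle = -\eta_{x, f_i}\langle f_i, x\cdot v_i\rangle$ reduces the claim to a single equality of $\pm 1$ products. This sign bookkeeping is the main obstacle, but it is mechanical: on the left one gathers $\prod_{k<i}\eta_{x, f_k}$ from the action on $\otimes f_k$, and an additional $\prod_{j<i}\eta_{x, v_j}$ from the expansion $\eta_{x\cdot f_i, v_j} = \eta_{x, v_j}\eta_{f_i, v_j}$ in the pairing; on the right one gathers $-\eta_{x, F} = -\prod_k\eta_{x, f_k}$, the factor $\prod_{k<i}\eta_{x, v_k}$ from the action on $\otimes v_j$, and $\prod_{l>i}\eta_{x, f_l}$ from expanding $\eta_{f_l, x\cdot v_i} = \eta_{x, f_l}\eta_{f_l, v_i}$, together with the invariance sign $-\eta_{x, f_i}$. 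After cancelling the common $\prod_{j<l}\eta_{f_l, v_j}$ and $\prod_{k<i}\eta_{x, v_k}$ and using $\eta_{x,f_l}^2 = 1$, both sides collapse to $\prod_{k<i}\eta_{x, f_k}$, proving the identity.

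Once (ii) is in place, the map $f\mapsto\langle f,\cdot\rangle$ is an $\widetilde{L}$-supermodule homomorphism, and combined with (i) it is an isomorphism, which settles~1). Part~2) then follows by pulling back along $\varphi$, since this pullback turns the general tensor superproduct action of $\widetilde{L}$ into precisely the restricted tensor superproduct action of $L$ on $\bigboxtimes_i M_i$, while leaving the bilinear form \eqref{eq.tensor.superproduct} unchanged.
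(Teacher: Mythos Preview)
Your proof is correct and follows the same overall strategy as the paper: establish nondegeneracy via dual bases, verify the dual-pairing identity for part~1), and deduce part~2) by restriction along the diagonal embedding $L \hookrightarrow \widetilde{L}$. The one genuine difference is that you exploit the direct-sum structure $\widetilde{L} = \bigoplus_i L_i$ to reduce immediately to $x \in L_i$ for a single $i$, so that the action touches only the $i$-th tensor slot; this collapses the sum over positions and makes the sign bookkeeping shorter. The paper instead carries the full sum $\sum_k$ through the computation, which is slightly heavier but has the advantage that the same chain of equalities literally applies to both the general and restricted actions without invoking the diagonal reduction until the very end. Both routes are sound; yours is a clean simplification.
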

\begin{proof}
Let $F \colon (\bigotimes_{i=1}^n M_i^*) \otimes (\bigotimes_{i=1}^n M_i) \to \FF$ be the composition of the isomorphism $M_1^* \otimes \cdots \otimes M_n^* \otimes M_1 \otimes \cdots \otimes M_n \to M_1^* \otimes M_1 \otimes M_2^* \otimes M_2 \otimes \cdots \otimes M_n^* \otimes M_n$ defined as in \eqref{permutation.isomorphism} (here, the permutation $\sigma$ is a shuffle that moves $M_i$ after the corresponding $M_i^*$), and the tensor product of the evaluation maps $M_i^* \otimes M_i \to \FF$. Note that $F$ is exactly the linear map associated to the bilinear pairing $\langle\cdot,\cdot\rangle$ in eq.\eqref{eq.tensor.superproduct}. By Remarks~\ref{remarks.referee}, it follows that $F$ is a composition of supermodule homomorphisms. Therefore $F$ is a supermodule homomorphism, thus the associated bilinear pairing in eq.\eqref{eq.tensor.superproduct} is a pairing of Lie supermodules.
Case 2) follows by restriction to the diagonal in eq.\eqref{diagonal.restriction}.
\end{proof}

\begin{df}
The bilinear form $\langle \cdot,\cdot \rangle \colon \bigotimes^n_{i=1} M_i^* \times \bigotimes^n_{i=1} M_i \to \FF$ in Proposition~\ref{prop.tensor.pairing} is called the \emph{tensor superproduct} of the dual pairings of supermodules $M_i^* \times M_i \to \FF$.
\end{df}

\section{Tensor superpowers of metric generalized Jordan superpairs} \label{section.pairs.tensor} \hfill

In this section, restricted tensor superpowers in $\MGJSP$ are introduced. Also, (general) tensor superproducts in $\MGJSP$ are revisited (these were studied in \cite{A22}).

\begin{df}
Take objects $(\cV_i, \langle\cdot,\cdot\rangle) \in\MGJSP$ and their corresponding objects $(L_i,M_i,b_i) \in\MFLSM$, for $i=1,\dots,n$. By the Faulkner construction, the tensor superproduct of supermodules $(\bigoplus_{i=1}^n L_i, \bigotimes_{i=1}^n M_i, \perp_{i=1}^n b_i)$ defines an object $(\bigotimes_{i=1}^n \cV_i, \langle\cdot,\cdot\rangle) \in\MGJSP$ that we will call the \emph{(general) tensor superproduct} of the objects 
$(\cV_i, \langle\cdot,\cdot\rangle)$. Given $(\cV, \langle\cdot,\cdot\rangle) \in\MGJSP$ and its corresponding object $(L,M,b) \in\MFLSM$, the restricted tensor superpower $(L, \bigboxtimes^n M, b)$ defines an object $(\bigboxtimes^n \cV, \langle\cdot,\cdot\rangle) \in\MGJSP$ that we will call the \emph{restricted tensor superpower} of $(\cV, \langle\cdot,\cdot\rangle)$. We can also consider the \emph{(general) tensor superpower} $\bigotimes^n \cV$ of $\cV$. By the Faulkner correspondence, we can consider the corresponding operations in $\MFLSM$, which define a (general) tensor superproduct and a restricted tensor superpower in $\MFLSM$, although we are not interested in these.
\end{df}

\begin{proposition} \label{restrictedTensorPowerPairs}
Let $\cV$ be a nonzero object in $\MGJSP$, $1 < n \in\NN$, and consider the restricted tensor superpower $\cW = \bigboxtimes^n \cV$. Then:
\begin{itemize}
\item[1)] The bilinear form $\langle\cdot,\cdot\rangle$ on $\cW$ is given by the tensor superpower of the bilinear form of $\cV$, that is,
$$ \langle \otimes_i f_i, \otimes_i v_i \rangle
= \prod_{i=1}^n \Bigl( \prod_{k < i} \eta_{f_i, v_k} \Bigr)
\langle f_i, v_i \rangle
= \Bigl( \prod_{1 \leq j < i \leq n} \eta_{f_i, v_j} \Bigr)
\Bigl( \prod_{i=1}^n \langle f_i, v_i \rangle \Bigr). $$
\item[2)] The generators of $\instr(\cW)$ are of the form
$$ \nu(\otimes_i f_i, \otimes_i v_i) = 
\Bigl( \prod_{1 \leq j < i \leq n} \eta_{f_i, v_j} \Bigr)
\sum_{i=1}^n \Bigl( \prod_{k \neq i} \langle f_k, v_k \rangle \Bigr)
\nu(f_i, v_i). $$
\item[3)] The triple products on $\cW$, for homogeneous elements $x_i, z_i \in \cV^\sigma$, $y_i \in \cV^{-\sigma}$, are given by
\begin{align*}
& \{\otimes_i x_i, \otimes_i y_i, \otimes_i z_i \} = \\
& \quad = \Bigl( \prod_{1 \leq j < i \leq n} \eta_{x_i, y_j} \Bigr)
\sum_{i,j=1}^n \Bigl( \prod_{t < j} \eta_{x_i, z_t} \eta_{y_i, z_t} \Bigr)
\Bigl( \prod_{k \neq i} \langle x_k, y_k \rangle \Bigr)
z_1 \otimes\dots\otimes \{x_i, y_i, z_j\} \otimes\dots\otimes z_n.
\end{align*}
\item[4)] $\bAut(\cV, \langle\cdot,\cdot\rangle) / \bmu_n \lesssim \bAut(\cW, \langle\cdot,\cdot\rangle)$.
\end{itemize}
\end{proposition}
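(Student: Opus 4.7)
The whole statement will follow by unwinding the Faulkner construction applied to the restricted tensor superpower of Lie supermodules. Let $(L,M,b) \in \MFLSM$ correspond to $(\cV, \langle\cdot,\cdot\rangle)$ under the Faulkner correspondence; by definition, $\cW$ arises by applying the Faulkner construction to $(L, \bigboxtimes^n M, b)$, so $\cW^- = (\bigboxtimes^n M)^* \cong \bigboxtimes^n M^*$ and $\cW^+ = \bigboxtimes^n M$, while the bilinear form on $\cW$ is the dual pairing of $L$-supermodules. Part 1) is then just Proposition~\ref{prop.tensor.pairing}-2) applied to this pairing, since the right-hand side of eq.~\eqref{eq.tensor.superproduct} matches the claimed formula after identifying $M^* = \cV^-$ and $M = \cV^+$.

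For part 2), the plan is to use the defining relation $b(x, [f,v]) = \langle x \cdot f, v \rangle$ from eq.~\eqref{bilinearFormsCorrespondence}, together with the action formula~\eqref{def.supermodule} for $x \in L$ on a pure tensor $f = \otimes_i f_i \in \bigboxtimes^n M^*$ and the pairing formula of part 1). Pairing $x \cdot f$ with $v = \otimes_i v_i$ and substituting $\langle x \cdot f_i, v_i \rangle = b(x, [f_i, v_i])$ at each single factor, the right-hand side collapses into a linear combination of terms $b(x, [f_i, v_i])$. Nondegeneracy of $b$ then identifies $[f,v] \in L$ as the same linear combination of the $[f_i, v_i]$'s, and applying the epimorphism $\Upsilon$ from eq.~\eqref{epimorphismFaulkner} (cf.\ Remark~\ref{erratumNu}) translates the identity from $L$ to $\instr(\cW)$, giving the claimed formula for $\nu(\otimes_i f_i, \otimes_i v_i)$.

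For part 3), I would substitute the expression of $[f,v]$ obtained in part 2) into the definitions $\{f,v,g\}^- = [f,v]\cdot g$ and $\{v,f,w\}^+ = [v,f]\cdot w$ from eq.~\eqref{action.tripleproducts.correspondence}, and expand once more via eq.~\eqref{def.supermodule}; the outer sum $\sum_i$ in the final formula will inherit from part 2), while the inner sum $\sum_j$ will come from the $L$-action on the tensor factors. For part 4), by the Faulkner correspondence every $\varphi \in \bAut(\cV, \langle\cdot,\cdot\rangle)(R)$ lifts uniquely to a point $(\varphi_L, \varphi_M) \in \bAut(L,M,b)(R)$; then $(\varphi_L, \varphi_M^{\otimes n})$ is an $R$-point of $\bAut(L, \bigboxtimes^n M, b)$, and transferring back through the Faulkner construction yields a morphism $\bAut(\cV, \langle\cdot,\cdot\rangle) \to \bAut(\cW, \langle\cdot,\cdot\rangle)$. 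Its kernel is exactly the set of $c_\lambda \in \bG_m$ for which $c_\lambda^{\otimes n} = \id$ on $\cW^\pm$, i.e.\ $\lambda^{\pm n} = 1$, which is $\bmu_n$; this gives the required embedding.

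The main obstacle throughout the plan will be the careful bookkeeping of the Koszul signs $\eta_{\cdot,\cdot}$ that arise from commuting $x$ past the factors $f_1, \dots, f_{i-1}$ in the derivation of part 2), since those signs must match precisely those produced in part 1) and then propagate consistently into the double sum of part 3).
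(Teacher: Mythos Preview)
For parts 1)--3), your plan matches the paper's proof: Proposition~\ref{prop.tensor.pairing}-2) for 1), expanding $b(x,[\otimes_i f_i,\otimes_i v_i]) = \langle x\cdot(\otimes_i f_i),\otimes_i v_i\rangle$ and using nondegeneracy of $b$ followed by $\Upsilon$ for 2), and substitution into $\nu(\cdot,\cdot)\cdot(\otimes_k z_k)$ for 3). The sign obstacle you anticipate is resolved in the paper via the single observation $\eta_{x,f_k}\eta_{x,v_k}\langle f_k,v_k\rangle = \langle f_k,v_k\rangle$ (evenness of the form), which collapses the accumulated factor $\prod_{k<i}\eta_{x,f_k}\eta_{x,v_k}$ to $1$ in each summand.

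For part 4) your route differs. The paper stays on the pair side, using the explicit formulas of 1) and 3) to check directly that $\varphi^{\otimes n}:=((\varphi^-)^{\otimes n},(\varphi^+)^{\otimes n})$ preserves both the bilinear form and the triple products of $\cW$. Your approach---lift to $(\varphi_L,\varphi_M)\in\bAut(L,M,b)$, form $(\varphi_L,\varphi_M^{\otimes n})$, and descend---is correct and more conceptual, but it relies on functoriality of the Faulkner construction on automorphisms for the possibly non-faithful $(L,\bigboxtimes^n M,b)$, a fact the paper only records in the faithful case; this is easy to verify but should be stated. Both approaches yield the same morphism $\varphi\mapsto\varphi^{\otimes n}$, and both leave the kernel identification $\ker=\bmu_n$ (i.e., that $(\varphi^\pm)^{\otimes n}=\id$ forces $\varphi^\pm$ to be a scalar) as a brief assertion.
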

\begin{proof}
1) Consequence of the isomorphism of supermodules $(\bigboxtimes^n M)^* \cong \bigboxtimes^n M^*$ produced by the bilinear pairing of $L$-supermodules $\bigboxtimes^n M^* \times \bigboxtimes^n M \to \FF$, which is given by the tensor superpower of the bilinear pairing of $L$-supermodules $M^* \times M \to \FF$.

2) Take homogeneous elements $x\in \instr(\cV)$, $f\in\cV^-$, $v\in\cV^+$. We claim that
\begin{equation} \label{auxEq1}
\eta_{x,f}\eta_{x,v} \langle f, v \rangle = \langle f, v \rangle.
\end{equation}
Indeed, if $\varepsilon(f) = \varepsilon(v)$ we have that $\eta_{x,f}\eta_{x,v} = 1$, otherwise we have that $\langle f, v \rangle = 0$ because $\langle\cdot,\cdot\rangle$ is even, and in both cases the claim follows. Then
\begin{align*}
b\big(&x, [\otimes_i f_i, \otimes_i v_i]\big) =_\eqref{bilinearFormsCorrespondence}
\quad \langle x \cdot (\otimes_i f_i), \otimes_j v_j \rangle \\
&= \sum_i \Bigl( \prod_{k < i} \eta_{x, f_k} \Bigr)
\langle f_1 \otimes\dots\otimes (x \cdot f_i) \otimes\dots\otimes f_n, \otimes_j v_j \rangle \\
&= \sum_i \Bigl( \prod_{k < i} \eta_{x, f_k} \Bigr)
\Bigl( \prod_{j=1}^n \prod_{k < j} \eta_{f_j, v_k} \Bigr)
\Bigl( \prod_{k < i} \eta_{x, v_k} \Bigr)
\Bigl( \prod_{k \neq i} \langle f_k, v_k \rangle \Bigr)
\langle x \cdot f_i, v_i \rangle
=_\eqref{bilinearFormsCorrespondence} \\
&= \Bigl( \prod_j \prod_{k < j} \eta_{f_j, v_k} \Bigr)
\sum_i \Bigl( \prod_{k < i} \eta_{x, f_k} \eta_{x, v_k} \Bigr)
\Bigl( \prod_{k \neq i} \langle f_k, v_k \rangle \Bigr)
b\big( x, [f_i, v_i] \big) =_\eqref{auxEq1} \\
&= \Bigl( \prod_{j < i} \eta_{f_i, v_j} \Bigr)
\sum_i \Bigl( \prod_{k \neq i} \langle f_k, v_k \rangle \Bigr)
b\big( x, [f_i, v_i] \big) \\
&= b\big( x, 
\Bigl( \prod_{j < i} \eta_{f_i, v_j} \Bigr)
\sum_i \Bigl( \prod_{k \neq i} \langle f_k, v_k \rangle \Bigr)
[f_i, v_i] \big),
\end{align*}
and since $b$ is nondegenerate we get
$[\otimes_i f_i, \otimes_i v_i] = \Bigl( \prod_{j < i} \eta_{f_i, v_j} \Bigr)
\sum_i \Bigl( \prod_{k \neq i} \langle f_k, v_k \rangle \Bigr)
[f_i, v_i]$,
and then applying the epimorphism $\Upsilon$ in \eqref{epimorphismFaulkner}, the property follows.

3) The triple product for homogeneous elements is given by
\begin{align*}
\{& \otimes_i x_i, \otimes_j y_j, \otimes_k z_k \} =
\nu(\otimes_i x_i, \otimes_j y_j) \cdot (\otimes_k z_k) \\
&= \Bigl( \prod_{j < i} \eta_{x_i, y_j} \Bigr)
\sum_i \Bigl( \prod_{k \neq i} \langle x_k, y_k \rangle \Bigr)
\nu(x_i, y_i) \cdot (\otimes_k z_k) \\
&= \Bigl( \prod_{j < i} \eta_{x_i, y_j} \Bigr)
\sum_i \Bigl( \prod_{k \neq i} \langle x_k, y_k \rangle \Bigr)
\sum_j \Bigl( \prod_{t < j} \eta_{x_i, z_t} \eta_{y_i, z_t} \Bigr) \cdot \\
& \qquad\qquad \cdot z_1 \otimes\dots\otimes (\nu(x_i, y_i) \cdot z_j) \otimes\dots\otimes z_n \\
&= \Bigl( \prod_{j < i} \eta_{x_i, y_j} \Bigr)
\sum_{i,j} \Bigl( \prod_{t < j} \eta_{x_i, z_t} \eta_{y_i, z_t} \Bigr)
\Bigl( \prod_{k \neq i} \langle x_k, y_k \rangle \Bigr)
z_1 \otimes\dots\otimes \{x_i, y_i, z_j\} \otimes\dots\otimes z_n. \\
\end{align*}

4) Let $\varphi \in \Aut_R(\cV_R, \langle\cdot,\cdot\rangle)$ and consider the pair of maps $\varphi^{\otimes n} := \big((\varphi^-)^{\otimes n}, (\varphi^+)^{\otimes n}\big)$ where $\otimes = \otimes_R$. We will first show that $\langle\cdot,\cdot\rangle$ is $\varphi^{\otimes n}$-invariant. Given homogeneous elements $f_i \in \cV^-$, $v_i \in \cV^+$, we have that
\begin{align*}
\langle &(\varphi^-)^{\otimes n}( \otimes_i f_i ), (\varphi^+)^{\otimes n}( \otimes_i v_i ) \rangle
= \langle \otimes_i \varphi^-(f_i), \otimes_i \varphi^+(v_i) \rangle \\
&= \prod_{i=1}^n \Bigl( \prod_{k < i} \eta_{\varphi^-(f_i), \varphi^+(v_k)} \Bigr)
\langle \varphi^-(f_i), \varphi^+(v_i) \rangle \\
&= \prod_{i=1}^n \Bigl( \prod_{k < i} \eta_{f_i, v_k} \Bigr)
\langle f_i, v_i \rangle
= \langle \otimes_i f_i, \otimes_i v_i \rangle,
\end{align*}
which proves that $\langle\cdot,\cdot\rangle$ is $\varphi^{\otimes n}$-invariant. On the other hand,
\begin{align*}
\{ & (\varphi^-)^{\otimes n}(\otimes_i x_i), (\varphi^+)^{\otimes n}(\otimes_j y_j),
(\varphi^-)^{\otimes n}(\otimes_k z_k) \}
= \{ \otimes_i \varphi^-(x_i), \otimes_j \varphi^+(y_j), \otimes_k \varphi^-(z_k) \} \\
&= \Bigl( \prod_{j < i} \eta_{\varphi^-(x_i), \varphi^+(y_j)} \Bigr)
\sum_{i,j} \Bigl( \prod_{t < j} \eta_{\varphi^-(x_i), \varphi^-(z_t)}
\eta_{\varphi^+(y_i), \varphi^-(z_t)} \Bigr)
\Bigl( \prod_{k \neq i} \langle \varphi^-(x_k), \varphi^+(y_k) \rangle \Bigr)
\cdot \\
& \qquad \cdot\varphi^-(z_1)
\otimes\dots\otimes \{\varphi^-(x_i), \varphi^+(y_i), \varphi^-(z_j)\}
\otimes\dots\otimes \varphi^-(z_n) \\
&= \Bigl( \prod_{j < i} \eta_{x_i, y_j} \Bigr)
\sum_{i,j} \Bigl( \prod_{t < j} \eta_{x_i, z_t} \eta_{y_i, z_t} \Bigr)
\Bigl( \prod_{k \neq i} \langle x_k, y_k \rangle \Bigr)
\varphi^-(z_1) \otimes\dots\otimes \varphi^- \big(\{ x_i, y_i, z_j \}\big)
\otimes\dots\otimes \varphi^-(z_n) \\
&= (\varphi^-)^{\otimes n}(\{ \otimes_i x_i, \otimes_j y_j, \otimes_k z_k \}),
\end{align*}
and similarly we get 
$$ \{ (\varphi^+)^{\otimes n}(\otimes_i x_i), (\varphi^-)^{\otimes n}(\otimes_j y_j),
(\varphi^+)^{\otimes n}(\otimes_k z_k) \}
= (\varphi^+)^{\otimes n}(\{ \otimes_i x_i, \otimes_j y_j, \otimes_k z_k \}). $$
We have proven that $\varphi^{\otimes n}\in\Aut_R(\cW_R,\langle\cdot,\cdot\rangle)$. Thus we have a morphism of affine group schemes 
\begin{equation}
\Phi^\otimes_n \colon \bAut(\cV, \langle\cdot,\cdot\rangle) \to \bAut(\cW, \langle\cdot,\cdot\rangle),
\end{equation}
whose kernel is given by $\ker(\Phi^\otimes_n)_R = \{ c_\lambda \med \lambda\in R^\times, \; \lambda^n = 1 \} \simeq \bmu_n(R)$, and the result follows.
\end{proof}

\bigskip

The following result is a minor generalization of the case $n=2$ in \cite[Prop.4.3]{A22}, and includes (general) tensor superpowers as a particular case. (See also Remark~\ref{erratumNu} above.)

\begin{proposition} \label{generalTensorProductPairs}
Let $\cV_i$ be nonzero objects in $\MGJSP$ for $i = 1,\dots,n$ and consider the tensor superproduct $\cW = \bigotimes_{i=1}^n\cV_i$. Then:
\begin{itemize}
\item[1)] The bilinear form $\langle\cdot,\cdot\rangle$ on $\cW$ is given by the tensor superproduct of the bilinear forms of the $\cV_i$, that is,
$$ \langle \otimes_i f_i, \otimes_i v_i \rangle
= \prod_{i=1}^n \Bigl( \prod_{k < i} \eta_{f_i, v_k} \Bigr)
\langle f_i, v_i \rangle
= \Bigl( \prod_{1 \leq j < i \leq n} \eta_{f_i, v_j} \Bigr)
\Bigl( \prod_{i=1}^n \langle f_i, v_i \rangle \Bigr). $$
\item[2)] The generators of $\instr(\cW)$ are of the form
$$ \nu(\otimes_i f_i, \otimes_i v_i) = 
\Bigl( \prod_{1 \leq j < i \leq n} \eta_{f_i, v_j} \Bigr)
\sum_{i=1}^n \Bigl( \prod_{k \neq i} \langle f_k, v_k \rangle \Bigr)
\nu(f_i, v_i). $$
\item[3)] The triple products on $\cW$, for homogeneous elements $x_i, z_i \in \cV^\sigma$, $y_i \in \cV^{-\sigma}$, are given by
\begin{align*}
& \{\otimes_i x_i, \otimes_i y_i, \otimes_i z_i \} = \\
& \quad = \Bigl( \prod_{1 \leq j < i \leq n} \eta_{x_i, y_j} \Bigr)
\sum_{i=1}^n \Bigl( \prod_{t < i} \eta_{x_i, z_t} \eta_{y_i, z_t} \Bigr)
\langle x_1, y_1 \rangle z_1 \otimes\dots\otimes \{x_i, y_i, z_i\}
\otimes\dots\otimes \langle x_n, y_n \rangle z_n.
\end{align*}
\item[4)] For the automorphism group schemes, we have
$$\bigotimes_{i=1}^n {}_{\bG_m} \bAut(\cV_i, \langle\cdot,\cdot\rangle)
\leq \bAut(\cW, \langle\cdot,\cdot\rangle).$$
\end{itemize}
\end{proposition}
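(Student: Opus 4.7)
The plan is to adapt the proof of Proposition~\ref{restrictedTensorPowerPairs} to the general tensor superproduct, with the simplification that now $L = \bigoplus_{i=1}^n L_i$ and each summand $L_i$ acts only on the $i$-th tensor factor of $\bigotimes_i M_i$.

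For part~1), the Faulkner construction builds $\cW$ from the tensor superproduct of supermodules $(\bigoplus_i L_i, \bigotimes_i M_i, \perp_i b_i)$, and part~1) of Proposition~\ref{prop.tensor.pairing} identifies the dual supermodule $(\bigotimes_i M_i)^*$ with $\bigotimes_i M_i^*$ via the bilinear form~\eqref{eq.tensor.superproduct}. Hence the pairing on $\cW^- \times \cW^+$ produced by the Faulkner construction is precisely that tensor superproduct.

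For part~2), I would mimic the computation in the proof of Proposition~\ref{restrictedTensorPowerPairs}(2). By nondegeneracy of $b = \perp_i b_i$, to identify $[\otimes_i f_i, \otimes_i v_i]$ it suffices to compute $b(x_k, [\otimes_i f_i, \otimes_i v_i])$ as $x_k$ ranges over each summand $L_k$. Using~\eqref{bilinearFormsCorrespondence} this equals $\langle x_k \cdot (\otimes_i f_i), \otimes_j v_j\rangle$, and since $x_k$ acts only on the $k$-th factor, the tensor action~\eqref{def.supermodule} produces a single term. Expanding the resulting pairing via~\eqref{eq.tensor.superproduct} and simplifying signs by the same evenness argument used in the proof of Proposition~\ref{restrictedTensorPowerPairs} (each factor $\eta_{x_k, f_t}\eta_{x_k, v_t}$ with $t<k$ equals $1$ whenever $\langle f_t, v_t\rangle \neq 0$, since the pairing is even), one recovers $b_k\bigl(x_k, \alpha_k [f_k, v_k]\bigr)$ with the scalar $\alpha_k = \bigl(\prod_{j<i}\eta_{f_i,v_j}\bigr)\prod_{i\neq k}\langle f_i, v_i\rangle$. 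This determines the $L_k$-component of $[\otimes_i f_i, \otimes_i v_i]$. Summing over $k$ and applying the epimorphism $\Upsilon$ from~\eqref{epimorphismFaulkner} (as per Remark~\ref{erratumNu}) yields the stated formula for $\nu(\otimes_i f_i, \otimes_i v_i)$.

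Part~3) then follows from part~2) by expanding $\{\otimes_i x_i, \otimes_j y_j, \otimes_k z_k\} = \nu(\otimes_i x_i, \otimes_j y_j)\cdot(\otimes_k z_k)$. The crucial simplification here, compared to the restricted case of Proposition~\ref{restrictedTensorPowerPairs}(3), is that each $\nu(x_i, y_i)\in\instr(\cV_i)$ acts only on the $i$-th tensor factor; hence the double sum over pairs $(i,j)$ present in the restricted case collapses to a single sum over $i$, with the triple product $\{x_i, y_i, z_i\}$ appearing exclusively in the $i$-th slot. The scalars $\langle x_k, y_k\rangle$ for $k\neq i$ are absorbed into the tensor entries $z_k$ by multilinearity.

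For part~4), the plan is to define a morphism of group schemes
\[
\prod_{i=1}^n \bAut(\cV_i, \langle\cdot,\cdot\rangle) \longrightarrow \bAut(\cW, \langle\cdot,\cdot\rangle), \qquad (\varphi_i) \longmapsto \bigl(\otimes_i \varphi_i^-, \otimes_i \varphi_i^+\bigr),
\]
and verify by direct substitution into parts~1) and~3) that $\otimes_i\varphi_i$ preserves both the bilinear form and the triple products (each $\varphi_i$ handles the $i$-th factor, and the signs are preserved because every $\varphi_i$ is even). The kernel consists of tuples $(c_{\lambda_i})$ with $\prod_i \lambda_i = 1$, since $\otimes_i c_{\lambda_i}$ acts on $\cW^\sigma$ as multiplication by $\prod_i \lambda_i^{\sigma 1}$. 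This kernel is precisely the subgroup scheme used to define the iterated central product $\bigotimes_{i=1}^n{}_{\bG_m}\bAut(\cV_i, \langle\cdot,\cdot\rangle)$, giving the desired embedding. The main obstacle I foresee is the sign bookkeeping in part~2): replacing $f_k$ by $x_k\cdot f_k$ inside the tensor product shifts its parity and perturbs the products of $\eta_{\cdot,\cdot}$ in~\eqref{eq.tensor.superproduct}, and the evenness of the pairing on each $\cV_i$ is the ingredient that makes these perturbations telescope to the clean formula.
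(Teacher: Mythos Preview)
Your proposal is correct and aligns with the paper's approach. The paper's primary stated argument is induction on $n$ from the case $n=2$ in \cite[Prop.4.3]{A22}, but it explicitly remarks that parts 2) and 3) can alternatively be obtained by adapting the computations from Proposition~\ref{restrictedTensorPowerPairs}, which is precisely what you do; your direct kernel computation for part 4) is likewise equivalent to what the inductive step would establish.
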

\begin{proof}
The four properties follow easily by induction from the case $n=2$ in \cite[Prop.4.3]{A22}. Note that 2) also follows from the calculations in the proof of Prop.~\ref{restrictedTensorPowerPairs}-2). Property 3) can also be proven using the calculations in the proof of Prop.~\ref{restrictedTensorPowerPairs}-3) as a shortcut, where we get that the triple product for homogeneous elements is given by
\begin{align*}
\{& \otimes_i x_i, \otimes_j y_j, \otimes_k z_k \} = \dots = \\
&= \Bigl( \prod_{j < i} \eta_{x_i, y_j} \Bigr)
\sum_i \Bigl( \prod_{k \neq i} \langle x_k, y_k \rangle \Bigr)
\sum_j \Bigl( \prod_{t < j} \eta_{x_i, z_t} \eta_{y_i, z_t} \Bigr)
z_1 \otimes\dots\otimes (\nu(x_i, y_i) \cdot z_j) \otimes\dots\otimes z_n \\
&= \Bigl( \prod_{j < i} \eta_{x_i, y_j} \Bigr)
\sum_i \Bigl( \prod_{k \neq i} \langle x_k, y_k \rangle \Bigr)
\Bigl( \prod_{t < i} \eta_{x_i, z_t} \eta_{y_i, z_t} \Bigr)
z_1 \otimes\dots\otimes (\nu(x_i, y_i) \cdot z_i) \otimes\dots\otimes z_n \\
&= \Bigl( \prod_{j < i} \eta_{x_i, y_j} \Bigr)
\sum_i \Bigl( \prod_{t < i} \eta_{x_i, z_t} \eta_{y_i, z_t} \Bigr)
\langle x_1, y_1 \rangle z_1 \otimes\dots\otimes \{x_i, y_i, z_i\}
\otimes\dots\otimes \langle x_n, y_n \rangle z_n. \\
\end{align*}
\end{proof}

\section{Alternating superpowers of Lie supermodules} \label{section.supermodules.alternating} \hfill

Throughout this section, unless otherwise stated, we will assume that $M$ is a nonzero finite-dimensional $L$-supermodule, where $L$ is a Lie superalgebra. Note that the results for supermodules also hold for vector superspaces, since these can be thought as Lie supermodules over $L = 0$. The results where the $L$-action is unimportant will be stated in terms of a finite-dimensional vector superspace $V$.

\begin{dfs}
Let $n \geq 2$ and consider the $L$-supermodule $\bigboxtimes^n M$. For $1\leq i < j \leq n$, let $\tau_{ij} \in \End(\bigotimes^n M )$ the linear map swapping the $i$-th and $j$-th components of pure tensors, that is, 
\begin{equation} \label{tauSwap}
\tau_{ij}(v_1 \otimes \dots \otimes v_n) := v_1 \otimes \dots \otimes v_{i-1} \otimes v_j \otimes v_{i+1} \otimes \dots \otimes
v_{j-1} \otimes v_i \otimes v_{j+1} \otimes \dots \otimes v_n.
\end{equation}
Consider the vector subsuperspace of $\bigboxtimes^n M$ given by
\begin{equation} \label{submodule_alternating}
\widehat{R}_n = \widehat{R}_n(M) := \lspan\{ \otimes_k v_k + \eta_{v_i, v_{i+1}} \tau_{i,i+1}( \otimes_k v_k )
\med 0 \neq v_k \in M_{\bar0} \cup M_{\bar1}, 1 \leq i < n \}.
\end{equation}
We will also denote $\widehat{R}^*_n = \widehat{R}^*_n(M) := \widehat{R}_n(M^*)$.
It is not too hard to see that
\begin{equation} \label{submodule_alternating2}
\widehat{R}_n = \{ v - \sgn(\sigma) \sigma \cdot v \med v \in \bigboxtimes^n M, \sigma \in S_n \},
\end{equation}
with the action $\sigma \cdot v$ defined as in Remarks~\ref{remarks.referee}-1).

The elements of the vector superspace $\bigwedge^n M := (\bigboxtimes^n M) / \widehat{R}_n$ will be called \emph{alternating supertensors}. Note that if $M$ is even, then $\bigwedge^n M$ is an alternating power of $M$, and if $M$ is odd, then (as a vector space) $\bigwedge^n M$ is a symmetric power of $M$. We will use the convention $\bigwedge^1 M := M$. The projection of a pure supertensor $v_1 \otimes \dots \otimes v_n$ in $\bigwedge^n M$ will be denoted by $v_1 \wedge \dots \wedge v_n$, or just $\wedge_i v_i$, and referred to as a \emph{pure alternating supertensor}. Note that the parity map of $\bigwedge^n M$ is given by $\varepsilon(\wedge_i v_i) := \sum_i \varepsilon(v_i)$ for homogeneous elements $v_i \in M$. We will say that $\wedge_i v_i$ is \emph{parity-ordered} if there exists $k\in\{0,1,\dots,n\}$ such that $v_i\in M_{\bar0}$ for $i \leq k$ and $v_j\in M_{\bar1}$ for $j > k$.

Note that $\widehat{R}_n$ is an $L$-subsupermodule of $\bigboxtimes^n M$, because for elements $x \in L$, $v \in \bigboxtimes^n M$, $\sigma \in S_n$, we have
$$ x \cdot \big( v - \sgn(\sigma) \sigma \cdot v \big)
= x \cdot v - \sgn(\sigma) \sigma \cdot (x \cdot v) \in \widehat{R}_n. $$
Consequently, $\bigwedge^n M$ becomes an $L$-supermodule with the action given by
\begin{equation} \label{eq.action.alternating}
x \cdot (\wedge_i v_i) := \sum_{i=1}^n  \Bigl( \prod_{k < i} \eta_{x, v_k} \Bigr)
v_1 \wedge \dots \wedge (x \cdot v_i) \wedge \dots \wedge v_n
\end{equation}
for each homogeneous $x \in L$, $v_i \in M$. We will refer to $(L, \bigwedge^n M)$ as the \emph{$n$-th alternating (or exterior) superpower} of the Lie supermodule $(L,M)$.
\end{dfs}

\begin{remark} \label{remark.restrictions.alternating}
Let $d_{\bar0} := \dim M_{\bar0}$, $d_{\bar1} := \dim M_{\bar1}$ be the even and odd dimensions of the Lie supermodule $M$, and $d := \dim M = d_{\bar0} + d_{\bar1}$. Note that for an even Lie supermodule $M$ (i.e., a Lie module), $\bigwedge^n M$ is just the usual alternating power, so that $\bigwedge^n M = 0$ for $n > \dim M$. Without further mention, we will only consider the cases with $\bigwedge^n M \neq 0$, i.e., we will assume
$\boxed{\text{$n \leq d_{\bar0}$ if $d_{\bar1} = 0$}}$.
On the other hand, we will also assume
$\boxed{\text{$\chr\FF = 0$ or $\chr\FF > n$ if $d_{\bar1} > 0$}}$,
which will grant nondegeneracy for certain bilinear form $\widehat{F}$ defined in \eqref{eq.F.hat}.
\end{remark}

\medskip

\begin{notation} \label{notationDetper1}
Let $L$ be a Lie superalgebra and $M$ a finite-dimensional $L$-supermodule. Our next goal is to construct a bilinear pairing of $L$-supermodules $\bigwedge^n M^* \times \bigwedge^n M \to \FF$.

Let $\langle\cdot,\cdot\rangle \colon \bigotimes^n M^* \times \bigotimes^n M \to \FF$ be the bilinear form defined as in \eqref{eq.tensor.superproduct}, where we consider $M_i = M$ for each $i = 1,\ldots,n$. Recall that $S_n$ acts on $\bigotimes^n M$ by means of the automorphisms in \eqref{permutation.isomorphism}. We claim that
\begin{align} \label{action.property}
\langle \sigma \cdot f, v \rangle = \langle f, \sigma^{-1} \cdot v \rangle
\end{align}
for all $f \in \bigotimes^n M^*$, $v \in \bigotimes^n M$, $\sigma \in S_n$. To prove the claim, first consider the case $n = 2$. Let $\tau = (1 \quad 2) \in S_2$, fix homogeneous elements $v,w \in M$, $f,g \in M^*$, and note that
\begin{align*}
& \langle \tau \cdot (f \otimes g), v \otimes w \rangle
= \langle \eta_{f,g} g \otimes f,  v \otimes w \rangle
= \eta_{f,g} \eta_{f,v} \langle g,v \rangle \langle f,w \rangle \\
&\quad= \eta_{w,v} \eta_{w,g} \langle g,v \rangle \langle f,w \rangle
= \langle f \otimes g, \eta_{v,w} w \otimes v \rangle
= \langle f \otimes g, \tau \cdot (v \otimes w) \rangle.
\end{align*}
For an arbitrary $n$ and any elementary transposition $\tau = (i \quad i+1) \in S_n$, the calculation is analogous; and since elementary transpositions generate $S_n$, the claim follows easily.

Now define a new bilinear form by
\begin{align} \label{def.F.no.hat}
F \colon \bigboxtimes^n M^* \times \bigboxtimes^n M \longrightarrow \FF, \qquad F(f,v) := \sum_{\sigma \in S_n} \sgn(\sigma) \langle f, \sigma \cdot v \rangle,
\end{align}
for all $f \in \bigotimes^n M^*$, $v \in \bigotimes^n M$. Now we claim that
\begin{align} \label{property.F.permutations}
F(f, \sigma \cdot v) = \sgn(\sigma) F(f,v) = F(\sigma \cdot f, v),
\end{align}
for all $f \in \bigotimes^n M^*$, $v \in \bigotimes^n M$, $\sigma \in S_n$.
Indeed, the left equality follows from a straightforward calculation, and the right equality follows using \eqref{def.F.no.hat} and \eqref{action.property}.
Since $S_n$ acts by automorphisms of $L$-supermodules, we have that
\begin{align*}
&F(f, x \cdot v) = \sum_{\sigma \in S_n} \sgn(\sigma) \langle f, \sigma \cdot (x \cdot v) \rangle
= \sum_{\sigma \in S_n} \sgn(\sigma) \langle f, c^\sigma_{M,\ldots,M} (x \cdot v) \rangle
= \sum_{\sigma \in S_n} \sgn(\sigma) \langle f, x \cdot c^\sigma_{M,\ldots,M}(v) \rangle \\
&\quad= \sum_{\sigma \in S_n} \sgn(\sigma) \langle f, x \cdot (\sigma \cdot v) \rangle
= -\eta_{x,f} \sum_{\sigma \in S_n} \sgn(\sigma) \langle x \cdot f, \sigma \cdot v \rangle
= -\eta_{x,f} F(x \cdot f, v),
\end{align*}
for all $x \in L$, $f \in \bigotimes^n M^*$, $v \in \bigotimes^n M$; in other words, $F$ is also a bilinear pairing of $L$-supermodules.

By \eqref{property.F.permutations}, it follows that $F$ satisfies the properties
\begin{align*}
F\big( f, v - \sgn(\sigma)\sigma \cdot v \big) = 0 = F\big( f - \sgn(\sigma)\sigma \cdot f, v\big),
\end{align*}
so that $F(\bigotimes^n M^*, \widehat{R}_n) = 0 = F(\widehat{R}^*_n, \bigotimes^n M)$.
Thus $F$ induces a bilinear map
\begin{equation} \label{eq.F.hat}
\widehat{F} \colon \bigwedge^n M^* \times \bigwedge^n M \to \FF,
\end{equation}
which is also a pairing of $L$-supermodules. (We will show in Notation~\ref{notation.basis.alternating} that $\widehat{F}$ is nondegenerate.)
\end{notation}

\medskip

\begin{notation} \label{notationDetper2}
Our aim now is to obtain an explicit expression for the bilinear form $\widehat{F}$ in \eqref{eq.F.hat}.

For $\alpha = (\alpha_1,\dots,\alpha_n) \in \ZZ_2^n$, consider the homogeneous subspace of $\bigotimes^n M$ given by $\bigotimes_\alpha M := \bigotimes_{i=1}^n M_{\alpha_i}$ and note that $\bigotimes^n M = \bigoplus_{\alpha\in\ZZ_2^n} \bigotimes_\alpha M$. Given $\alpha\in\ZZ_2^n$, define the ordered sets $\iota_{\bar0}(\alpha) := \{i_1,\dots,i_k\}$ and $\iota_{\bar1}(\alpha) := \{j_1,\dots,j_{n-k}\}$, where $i_1 < \dots < i_k$ are the subscripts $i$ where $\alpha_i = \bar0$ and $j_1 < \dots < j_{n-k}$ are the subscripts $j$ where $\alpha_j = \bar1$. For $0\leq k\leq n$, let $\bigotimes^{(k,n-k)}M$ denote the direct sum of the subspaces $\bigotimes_\alpha M$ such that $\alpha$ has $\bar0$ appearing in $k$ entries and $\bar1$ appearing in $n-k$ entries. We will denote the image of $\bigotimes^{(k,n-k)}M$ on the quotient $\bigwedge^n M$ by $\bigwedge^{(k, n-k)} M$. (For $k > d_{\bar0}$, we have $\bigwedge^{(k,n-k)}M = 0$, which can be proven as for alternating powers in the non-super case.) The relations in $\widehat{R}_n$ show that $\bigwedge^{(k,n-k)}M$ is also the image of $\bigotimes^k M_{\bar0} \otimes \bigotimes^{n-k} M_{\bar1}$ on the quotient.

Let $\widehat{R}_n^{(k,n-k)} := \widehat{R}_n \cap \bigotimes^{(k,n-k)}M$, then it is easy to see that $\widehat{R}_n = \bigoplus_{k=0}^n \widehat{R}_n^{(k,n-k)}$, and it follows that
\begin{equation} \label{decompositionAlternating}
\bigwedge^n M = \bigoplus_{k=0}^{\min(d_{\bar0},n)} \bigwedge^{(k, n-k)} M.
\end{equation}
Let $\widehat{F}^{(k,n-k)}$ denote the restriction of $\widehat{F}$ to $\bigwedge^{(k, n-k)} M^* \times \bigwedge^{(k, n-k)} M$. By \eqref{def.F.no.hat} and \eqref{decompositionAlternating}, we have that
\begin{equation} \label{decompositionAlternatingF}
\widehat{F} = \perp_{k=0}^{\min(d_{\bar0},n)} \widehat{F}^{(k,n-k)}.
\end{equation}

For convenience, we introduce the $\detper$ operators, defined as a combination of the determinant and the permanent. For $0\leq k\leq n$ and $A = (a_{ij})_{ij} \in \cM_n(\FF)$, set
\begin{equation} \label{detper}
\detper_{k,n-k}(A) :=
\det\Bigl( (a_{ij})_{1 \leq i,j \leq k} \Bigr)
\per\Bigl( (a_{ij})_{k < i,j \leq n} \Bigr),
\end{equation}
where we use the convention $\det(\emptyset) = 1 = \per(\emptyset)$ for the ``empty submatrix''.
For $0 \leq k \leq n$, denote
\begin{align} \label{def.omega_n}
\omega_k := (-1)^{\sum_{0 \leq i < k} i} = (-1)^{\frac{k(k-1)}{2}} = (-1)^{k \choose 2}.
\end{align}
Note that
\begin{equation} \label{omegaProp}
\omega_k\omega_{k+1} = (-1)^k.
\end{equation}

Fix $0\leq k \leq \min(d_{\bar0}, n)$. We will now prove that $\widehat{F}^{(k,n-k)}$ is given, for parity-ordered elements, by
\begin{equation} \label{eq.F.hat.restriction} \begin{split}
\widehat{F}^{(k,n-k)} \colon &
\bigwedge^{(k, n-k)} M^* \times \bigwedge^{(k, n-k)} M \longrightarrow \FF, \\
& (\wedge_i f_i, \wedge_j v_j) \longmapsto
\omega_{n-k} \detper_{k,n-k}\Bigl( \big( \langle f_i, v_j \rangle \big)_{ij} \Bigr).
\end{split} \end{equation}
Fix parity-ordered elements $\wedge_i f_i \in \bigwedge^{(k, n-k)} M^*$, $\wedge_j v_j \in \bigwedge^{(k, n-k)} M$. Let $f'_1 = \otimes_{i \leq k} f_i$, $f'_2 = \otimes_{i > k} f_i$, $v'_1 = \otimes_{i \leq k} v_i$, $v'_2 = \otimes_{i > k} v_i$.
Identify $S_k \times S_{n-k}$ with the subgroup of $S_n$ that fixes the sets $\{1,\dots,k\}$ and $\{k+1,\dots,n\}$.
Note that the nonzero terms contributing to the sum $\widehat{F}(\wedge_i f_i, \wedge_j v_j)$ correspond to the permutations in $S_k \times S_{n-k}$, thus
\begin{align*}
&\widehat{F}(\wedge_i f_i, \wedge_j v_j) = F(\otimes_i f_i, \otimes_j v_j)
= \sum_{\sigma = \sigma_1\sigma_2 \in S_k \times S_{n-k}}
\sgn(\sigma) \langle \otimes_i f_i, \sigma \cdot \otimes_j v_j \rangle \\
&\quad= \Bigg( \sum_{\sigma_1 \in S_k} \sgn(\sigma_1) \langle f'_1, \sigma_1 \cdot v'_1 \rangle \Bigg)
\Bigg( \sum_{\sigma_2 \in S_{n-k}} \sgn(\sigma_2)
\langle f'_2, \sigma_2 \cdot v'_2 \rangle \Bigg) \\
&\quad= \Bigg( \sum_{\sigma_1 \in S_k} \sgn(\sigma_1)
\prod_{1 \leq i,j \leq k} \langle f_i, v_{\sigma_1^{-1}(j)} \rangle \Bigg)
\Bigg( \sum_{\sigma_2 \in S_{n-k}} \sgn(\sigma_2) 
\Big( \sgn(\sigma_2) \omega_{n-k} \Big)
\prod_{k < i,j \leq n} \langle f_i, v_{\sigma_2^{-1}(j)} \rangle \Bigg) \\
&\quad= \omega_{n-k} 
\det\Bigl( (\langle f_i, v_j \rangle)_{1 \leq i,j \leq k} \Bigr)
\per\Bigl( (\langle f_i, v_j \rangle)_{k < i,j \leq n} \Bigr)
= \omega_{n-k} \detper_{k,n-k}\Bigl( \big( \langle f_i, v_j \rangle \big)_{ij} \Bigr).
\end{align*}
We have proven eq.\eqref{eq.F.hat.restriction}.

In general, for parity-ordered elements $\wedge_i f_i \in\bigwedge^{(r, n-r)} M^*$, $\wedge_j v_j \in\bigwedge^{(s, n-s)} M$, we have
\begin{equation} \label{eq.F.hat.unrestricted}
\widehat{F}( \wedge_i f_i, \wedge_j v_j )
= \omega_{n-r} \detper_{r,n-r}\Bigl( \big( \langle f_i, v_j \rangle \big)_{ij} \Bigr)
= \omega_{n-s} \detper_{s,n-s}\Bigl( \big( \langle f_i, v_j \rangle \big)_{ij} \Bigr),
\end{equation}
because $\detper_{r,n-r}\Bigl( \big( \langle f_i, v_j \rangle \big)_{ij} \Bigr) = 0
= \detper_{s,n-s}\Bigl( \big( \langle f_i, v_j \rangle \big)_{ij} \Bigr)$ if $r \neq s$.
\end{notation}

\medskip

\begin{notation} \label{notation.basis.alternating}
Finally, we will describe some properties of the bilinear form $\widehat{F}$ in \eqref{eq.F.hat}.

For $0\leq k \leq \min(d_{\bar0},n)$, consider the family of ordered $n$-tuples
\begin{equation}
\altI^{(k,n-k)} = \altI^{(k,n-k)}(M) := \{ I=(i_1,\dots,i_n)
\med 1 \leq i_1 < \dots <i_k \leq d_{\bar0} < i_{k+1} \leq \dots \leq i_n \leq d \},
\end{equation}
and
\begin{equation}
\altI_n = \altI_n(M) := \bigcup_{0 \leq k \leq \min(d_{\bar0},n)} \altI^{(k,n-k)}(M).
\end{equation}
The elements of $\altI_n$ and $\altI^{(k,n-k)}$ will be used as sets of ordered indices of supermatrices, where $k$ and $n-k$ correspond to the number of indices coming from the even and odd subspaces, respectively.

Fix a basis $\cB = \{ v_i \}_{i=1}^d$ of $M$ such that $\cB_{\bar0} := \{ v_i \}_{i=1}^{d_{\bar0}}$ and $\cB_{\bar1} := \{ v_i \}_{i=d_{\bar0}+1}^d$ are bases of $M_{\bar0}$ and $M_{\bar1}$, respectively. Let $\cB^* = \{ f_i \}_{i=1}^d$ be the dual basis of $\cB$. Thus
$\cB^*_{\bar0} := \{ f_i \}_{i=1}^{d_{\bar0}}$
and
$\cB^*_{\bar1} := \{ f_i \}_{i=d_{\bar0}+1}^d$
are the dual bases of $\cB_{\bar0}$ and $\cB_{\bar1}$.
The relations given by $\widehat{R}_n$ show that $\bigwedge^n M$ is spanned by the set
\begin{equation} \label{basisAlternating}
\altB_n := \{ \hat{e}_I := \wedge_{i\in I} v_i \med I \in\altI_n \},
\end{equation}
and similarly $\bigwedge^n M^*$ is spanned by
\begin{equation} \label{dualBasisAlternating}
\altB^*_n := \{ \hat{e}^*_I := \wedge_{i\in I} f_i \med I \in\altI_n \}.
\end{equation}

Note that $\widehat{F}(\hat{e}^*_I, \hat{e}_J) = 0$ for all $\hat{e}^*_I \in \altB^*_n$, $\hat{e}_J \in \altB_n$ with $I \neq J$. 
On the other hand, given $I \in\altI_n$, let $M_I = (m_1,\ldots,m_t)$ denote the sequence of multiplicities of the entries of $I$ (thus $\sum_i m_i = n$), and $k$ the number of even coordinates $v_i$ for the element $\hat{e}_I = \wedge_{i\in I} v_i $. Then, define
\begin{equation} \label{kappa.coef}
\kappa_I := \omega_{n-k} \prod_{m \in M_I} m! = \omega_{n-k} m_1! \cdots m_t!,
\end{equation}
with $\omega_{n-k}$ as in \eqref{def.omega_n}.
Then, by \eqref{eq.F.hat.unrestricted} it is clear that
\begin{equation} \label{dual.pairing.alternating}
\widehat{F}(\hat{e}^*_I, \hat{e}_J) = \kappa_I \delta_{I,J} = \kappa_J \delta_{I,J}.
\end{equation}

Since we are assuming that $\chr\FF = 0$ or $\chr\FF > n$ whenever $d_{\bar1} > 0$, it follows from \eqref{dual.pairing.alternating} that $\widehat{F}$ defines a dual pairing (i.e., $\widehat{F}$ is nondegenerate), and also that $\altB_n$ and $\altB^*_n$ are bases of $\bigwedge^n M$ and $\bigwedge^n M^*$, respectively. Unfortunately, $\altB_n$ and $\altB^*_n$ are not $\widehat{F}$-dual bases in general. 
It is clear that
\begin{equation}
\dim \bigwedge^n M = |\altB_n| = |\altI_n|
= \sum_{k=0}^{\min(d_{\bar0},n)} {d_{\bar0} \choose k} {d_{\bar1} + (n - k) - 1 \choose n-k}.
\end{equation}
Since $\widehat{F}$ is a dual pairing of supermodules, it defines an isomorphism of $L$-supermodules $\bigwedge^n M^* \cong (\bigwedge^n M)^*$, $f \mapsto \widehat{F}(f,\cdot)$.
\end{notation}

\begin{df}
The Lie supermodules duality map $\widehat{F}$ defined in \eqref{eq.F.hat} will be referred to as the \emph{$n$-th alternating superpower} of the corresponding Lie supermodules duality map $M^* \times M \to \FF$. It will be denoted by $\langle\cdot,\cdot\rangle$ in further sections.
\end{df}

\begin{notation}
Let $L$ be a Lie superalgebra. Given a homomorphism of finite-dimensional Lie supermodules, $h \in \Hom_L(M,N)$, it is clear that $h^{\otimes n} \in \Hom_L(\bigotimes^n M, \bigotimes^n N)$, and $h^{\otimes n}\big(\widehat{R}_n(M)\big) \subseteq \widehat{R}_n(N)$ because $h$ is even. Thus $h^{\otimes n}$ induces an element $h^{\wedge n} \in \Hom_L(\bigwedge^n M, \bigwedge^n N)$, given by $h^{\wedge n}(\wedge_i x_i) = \wedge_i h(x_i)$ for any elements $x_i\in M$. It is also clear that the composition of two homomorphisms, $h_1$ and $h_2$, satisfies the property
$(h_2 \circ h_1)^{\wedge n} = h_2^{\wedge n} \circ h_1^{\wedge n}$.

Consider now the case $M = N$, i.e., $h \in \End_L(M)$. Let $h^*$ be the dual map of $h$ for the bilinear pairing $\langle\cdot,\cdot\rangle \colon M^* \times M \to \FF$, and $(h^{\wedge n})^*$ the $\widehat{F}$-dual map of $h^{\wedge n}$. Then for parity-ordered elements $\wedge_i f_i \in \bigwedge^{(k,n-k)} M^*$ and $\wedge_j v_j \in \bigwedge^{(k,n-k)} M$, we get
\begin{align*}
\widehat{F}\big( \wedge_i & f_i, h^{\wedge n}(\wedge_j v_j) \big)
= \widehat{F}\big( \wedge_i f_i, \wedge_j h(v_j) \big)
= \omega_{n-k} \detper_{k,n-k}\Bigl(\big( \langle f_i,h(v_j) \rangle \big)_{ij}\Bigr) \\
&= \omega_{n-k} \detper_{k,n-k}\Bigl(\big( \langle h^*(f_i),v_j \rangle \big)_{ij}\Bigr)
= \widehat{F}\big( \wedge_i h^*(f_i), \wedge_j v_j \big)
= \widehat{F}\big( (h^*)^{\wedge n}(\wedge_i f_i), \wedge_j v_j \big),
\end{align*}
thus $(h^*)^{\wedge n} = (h^{\wedge n})^*$.
\end{notation}

\smallskip

\begin{notation} \label{notationSgnAlt}
Let $0\leq k \leq \min(d_{\bar0}, n)$ and identify $S_k \times S_{n-k}$ with the subgroup of $S_n$ that fixes the sets $\{1,\dots,k\}$ and $\{k+1,\dots,n\}$. Let $I = (i_1,\dots,i_n) \in\altI^{(k,n-k)}$ and consider the parity-ordered element $\hat{e}_I := \wedge_{i\in I} v_i = \wedge_t v_{i_t} \in\altB_n$. For each permutation $\sigma\in S_n$, let $\sgnalt_{k,n-k}(\sigma)$ denote the sign defined by
\begin{equation} \label{sgnAlt}
\wedge_{i\in I} v_i = \sgnalt_{k,n-k}(\sigma) \wedge_t v_{i_{\sigma(t)}},
\end{equation}
and note that $\sgnalt_{k,n-k}(\sigma\rho) = \sgn(\sigma)$ for $\sigma\in S_k$, $\rho\in S_{n-k}$.
\end{notation}

\smallskip

\begin{notation} \label{notationPermutationRepetitions}
Let $V$ be a finite-dimensional vector superspace. Consider the action of $S_n$ on $S = \{1,\dots,d\}^n$ given by $\sigma\big( (i_1,\dots,i_n) \big) := (i_{\sigma^{-1}(1)},\dots,i_{\sigma^{-1}(n)})$. Fix $I = (i_1,\dots,i_n) \in\altI_n(V) \subseteq S$ and take $H = \Stab_{S_n}(I)$, $O = \Orb_{S_n}(I)$. By abuse of notation, we will denote by $S_n(I)$ any left transversal of $H$ in $S_n$. Note that by the Orbit-Stabilizer Theorem, the multiset $\{ \sigma(I) \med \sigma\in S_n \}$ is the orbit $O$, where each element has multiplicity $|H|$, whereas the multiset $\{ \sigma(I) \med \sigma\in S_n(I) \}$ has the same elements with multiplicity one. We will use the notation $S_n(I)$ for parametrizations of $O$ without repetitions. In particular, $S_n(I)$ will be used as a set of indices for sums where we do not want to repeat elements of $O$ (if $\chr\FF = 0$, this is equivalent to iterate on $S_n$ and divide the sum by $|H|$). If all entries in $I$ are different, then we have $S_n(I) = S_n$.
\end{notation}

\smallskip

\begin{notation}
Let $V$ be a finite-dimensional vector superspace. Let $I = (i_1,\dots,i_n) \in\altI^{(p,n-p)}(V)$, $J = (j_1,\dots,j_n) \in\altI^{(q,n-q)}(V)$ with $0\leq p,q \leq \min(d_{\bar0},n)$. Take an even supermatrix $A = (a_{ij})_{ij} = \diag(A_{\bar0}, A_{\bar1}) \in \cM_{(d_{\bar0}|d_{\bar1}) \times (r|s)}(\FF) = \cM_{d \times n}(\FF)$, with $A_{\bar0} \in\cM_{d_{\bar0} \times r}(\FF)$ and $A_{\bar1} \in\cM_{d_{\bar1} \times s}(\FF)$. Identify as subgroup $S_p \times S_{n-p} \leq S_n$ (as in Notation~\ref{notationSgnAlt}). We define the \emph{(alternating) $(I,J)$-superminor} of $A$ by
\begin{equation}
\widehat{\cM}_{I,J}(A) := \sum_{\sigma \in S_n(I)}
\sgnalt_{p,n-p}(\sigma) \prod_{t=1}^n a_{i_{\sigma(t)}, j_t}.
\end{equation}
We may also refer to alternating superminors as \emph{$\detper$-superminors}.
By the block structure of $A$ it follows that
\begin{equation}
\widehat{\cM}_{I,J}(A) = 
\Bigl( \sum_{\sigma \in S_p} \sgn(\sigma)
\prod_{t=1}^p a_{i_{\sigma(t)}, j_t} \Bigr)
\Bigl( \sum_{\rho \in S_{n-p}(I)}
\prod_{t=p+1}^n a_{i_{\rho(t)}, j_t} \Big).
\end{equation}
Note that $\widehat{\cM}_{I,J}(A) = 0$ if $p \neq q$. If $I = J$, then the superminor will be said to be a \emph{principal} superminor.

Consider the case with $n = d-1$. For $1 \leq i,j \leq n$, let $I_i = (1,\dots,i-1,i+1,\dots,d)$, $I_j = (1,\dots,j-1,j+1,\dots,d)$. Then the term $\widehat{\cM}_{ij}(A) := \widehat{\cM}_{I_i,I_j}(A)$ will be called the \emph{(alternating) $(i,j)$-superminor} of $A$. Also, $(i,j)$-superminors will be referred to as \emph{(alternating) first superminors}.
\end{notation}

\bigskip

The following result generalizes \cite[Chap.3, \S8.5, Prop.9 \& Prop.10]{B89}.

\begin{proposition} \label{altBourbaki}
Let $V$ and $V'$ be finite-dimensional vector superspaces. Let $\cB = \{ v_i \}_{i=1}^d$ and $\cB' = \{ v'_i \}_{i=1}^{d'}$ be parity-ordered bases of $V$ and $V'$, respectively. Consider the associated bases $\altB_n = \{\hat{e}_I\}_{I\in\altI_n(V)}$ and $\altB'_n = \{\hat{e}'_{I'}\}_{I'\in\altI_n(V')}$ of $\bigwedge^n V$ and $\bigwedge^n V'$, defined as in \eqref{basisAlternating} by using $\cB$ and $\cB'$. Then:

\begin{itemize}
\item[$1)$] Take a parity-ordered subset $\{w_j\}_{j=1}^n \subseteq V$, with $\{w_j\}_{j=1}^r \subseteq V_{\bar0}$ and $\{w_j\}_{j=r+1}^n \subseteq V_{\bar1}$ for some $0 \leq r \leq \min(d_{\bar0}, n)$, let $s = n - r$, and set $w_j = \sum_{i=1}^d a_{ij} v_i$. Consider the even supermatrix $A = (a_{ij})_{ij} = \diag(A_{\bar0}, A_{\bar1}) \in \cM_{(d_{\bar0}|d_{\bar1}) \times (r|s)}(\FF) = \cM_{d \times n}(\FF)$, with $A_{\bar0} \in\cM_{d_{\bar0} \times r}(\FF)$ and $A_{\bar1} \in\cM_{d_{\bar1} \times s}(\FF)$. Let $J = (1,\dots,n)$. Then
\begin{equation}
\wedge_j w_j = \sum_{I \in\altI_n(V)} \widehat{\cM}_{I,J}(A) \hat{e}_I,
\end{equation}
where $\widehat{\cM}_{I,J}(A) = 0$ if $I \notin \altI^{(r,n-r)}(V)$.

\item[$2)$] Let $h \colon V \to V'$ be an even homomorphism of vector superspaces, and let $A = (a_{ij})_{ij} = \diag(A_{\bar0}, A_{\bar1})\in \cM_{(d'_{\bar0}|d'_{\bar1}) \times (d_{\bar0}|d_{\bar1})}(\FF) = \cM_{d' \times d}(\FF)$ be its coordinate matrix on the bases $\cB$ and $\cB'$, which is an even supermatrix with $A_{\bar0} \in\cM_{d'_{\bar0} \times d_{\bar0}}(\FF)$ and $A_{\bar1} \in\cM_{d'_{\bar1} \times d_{\bar1}}(\FF)$. Then, the coordinate matrix of $h^{\wedge n}$ in the bases $\altB_n$ and $\altB'_n$ is
\begin{equation} \label{matrixAltPower}
A^{\wedge n} := \big( \widehat{\cM}_{I',I}(A) \big)_{I'\in\altI_n(V'), I \in\altI_n(V)}.
\end{equation}
(Here, $\widehat{\cM}_{I',I}(A) = 0$ if $r \neq s$, where $I\in\altI^{(r,n-r)}(V)$, $I'\in\altI^{(s,n-s)}(V')$.)
\end{itemize}
\end{proposition}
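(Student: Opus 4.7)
The plan is to establish part (1) by direct multilinear expansion of $\wedge_j w_j$, followed by a sign-tracked reordering into parity-ordered form; part (2) then follows by applying (1) to the tuples $(h(v_{i_1}),\dots,h(v_{i_n}))$ for $I=(i_1,\dots,i_n)\in\altI_n(V)$.

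For (1), I will first expand by multilinearity of each factor $w_t=\sum_i a_{i,t}v_i$, obtaining
$$\wedge_t w_t=\sum_{(k_1,\dots,k_n)\in\{1,\dots,d\}^n}\Bigl(\prod_t a_{k_t,t}\Bigr)\,v_{k_1}\wedge\dots\wedge v_{k_n}.$$
Since $\cB$ is parity-ordered and $A$ respects the block structure (i.e., $a_{k_t,t}=0$ unless $v_{k_t}$ and $w_t$ share parity), only tuples with $k_t\leq d_{\bar0}$ for $t\leq r$ and $k_t>d_{\bar0}$ for $t>r$ contribute. Next, I will reorder each surviving pure wedge using the generators \eqref{submodule_alternating} of $\widehat{R}_n$, which translate to $v\wedge v'=-\eta_{v,v'}\,v'\wedge v$: adjacent transpositions of two even vectors produce $-1$, while those of two odd vectors produce $+1$. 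Grouping tuples by their parity-ordered representative $I\in\altI^{(r,n-r)}(V)$, the surviving tuples collapsing to $I$ are parametrized without repetition by a transversal $S_n(I)$ of $\Stab_{S_n}(I)$, and each $\sigma\in S_n(I)$ contributes $\sgnalt_{r,n-r}(\sigma)\prod_t a_{i_{\sigma(t)},t}$ by \eqref{sgnAlt}; summing yields exactly $\widehat{\cM}_{I,(1,\dots,n)}(A)$ as the coefficient of $\hat{e}_I$. If $I\notin\altI^{(r,n-r)}(V)$, the block structure of $A$ forces the corresponding coefficient to vanish.

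For (2), I will fix $I=(i_1,\dots,i_n)\in\altI_n(V)$ and note that, since $h$ is even, the sequence $(h(v_{i_1}),\dots,h(v_{i_n}))$ is parity-ordered with the same parity split as $I$; its coordinate matrix in $\cB'$ is the submatrix $A_I$ of $A$ consisting of the columns indexed by $I$. Applying part (1) gives
$$h^{\wedge n}(\hat{e}_I)=\wedge_t h(v_{i_t})=\sum_{I'\in\altI_n(V')}\widehat{\cM}_{I',(1,\dots,n)}(A_I)\,\hat{e}'_{I'},$$
and direct inspection of the definition shows $\widehat{\cM}_{I',(1,\dots,n)}(A_I)=\widehat{\cM}_{I',I}(A)$, which is precisely the $(I',I)$-entry of \eqref{matrixAltPower}.

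The main obstacle will be the sign accounting in the reordering step of (1): I must verify that the only non-trivial sign to track is the sign of the induced permutation on the first $r$ slots (the odd-block contribution being trivial by symmetry), and that the surviving tuples collapsing to a given $I\in\altI^{(r,n-r)}$ are correctly enumerated by the transversal $S_n(I)$ rather than by all of $S_n$ (to avoid overcounting when the odd part of $I$ has repeated entries). The factorization $\sgnalt_{r,n-r}(\sigma\rho)=\sgn(\sigma)$ for $\sigma\in S_r$, $\rho\in S_{n-r}$ from Notation~\ref{notationSgnAlt} is exactly what is needed to match the factored $\det$-$\per$ expression for $\widehat{\cM}_{I,J}(A)$.
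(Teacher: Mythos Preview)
Your proposal is correct, and part (2) matches the paper's argument exactly. For part (1), however, you take a genuinely different route from the paper. After the common multilinear expansion
\[
\wedge_j w_j=\sum_{k_1,\dots,k_n}\Bigl(\prod_t a_{k_t,t}\Bigr)\wedge_t v_{k_t},
\]
the paper does \emph{not} reorder and collect by hand. Instead it exploits the nondegenerate pairing $\widehat{F}$ from Notation~\ref{notationDetper}: since $\altB^*_n$ is the $\widehat{F}$-dual basis of $\altB_n$, the coefficient of $\hat{e}_I$ is simply $\widehat{F}(\hat{e}^*_I,\wedge_j w_j)$, and the $\detper$ formula for $\widehat{F}$ immediately yields $\widehat{\cM}_{I,J}(A)$ after dropping the terms orthogonal to $\hat{e}^*_I$. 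Your approach is more elementary and self-contained (it does not rely on the $\widehat{F}$-machinery already built), while the paper's approach is shorter and offloads all sign bookkeeping to the previously established duality. One small point to tidy in your write-up: the ``surviving'' (block-respecting) tuples that collapse to a fixed $I$ are really parametrized by a transversal of $\Stab(I)$ inside $S_r\times S_{n-r}$, not inside $S_n$; to match the defining sum of $\widehat{\cM}_{I,J}(A)$ over $S_n(I)$ you should note that the additional coset representatives outside $S_r\times S_{n-r}$ contribute zero by the block structure of $A$, so the two sums agree.
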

\begin{proof}
\noindent 1) Let $\cB^* = \{f_i\}_{i=1}^d$ be the dual basis of $\cB$, and $\altB^*_n$ the $\widehat{F}$-dual basis of $\altB_n$ constructed from $\cB^*$ as in \eqref{dualBasisAlternating}. Then
\begin{align*}
\wedge_j w_j &= \wedge_j\Bigl( \sum_{i=1}^d a_{ij} v_i \Bigr)
= \sum_{k_1,\dots,k_n} \wedge_t (a_{k_t, t} v_{k_t})
= \sum_{k_1,\dots,k_n} \Bigl( \prod_{t=1}^n a_{k_t, t} \Bigr) \Bigl(\wedge_t v_{k_t}\Bigr).
\end{align*}
For $I = (i_1,\dots,i_n) \in\altI^{(k,n-k)}(V)$, the coefficient of $\wedge_{i\in I}v_i \in \altB_n$ in the coordinates of $\wedge_j w_j$ in $\altB_n$ is given by $\kappa_I^{-1} \widehat{F}\big(\wedge_{i\in I} f_i, \wedge_j w_j\big)$. Besides,
\begin{align*} 
\kappa_I^{-1} \widehat{F}( & \wedge_{i\in I} f_i, \wedge_j w_j)
= \kappa_I^{-1} \widehat{F}\big( \wedge_{i\in I} f_i, \sum_{k_1,\dots,k_n}
\Bigl( \prod_{t=1}^n a_{k_t, t} \Bigr) \Bigl(\wedge_t v_{k_t}\Bigr) \big)
=_\text{$(\star)$, Not.\ref{notationPermutationRepetitions}} \\
&= \kappa_I^{-1} \widehat{F}\big( \wedge_{i\in I} f_i, \sum_{\sigma \in S_n(I)}
\Bigl( \prod_{t=1}^n a_{i_{\sigma(t)},t} \Bigr)\Bigl(\wedge_t v_{i_{\sigma(t)}}\Bigr) \big)
=_{\eqref{sgnAlt}} \\
&= \kappa_I^{-1} \sum_{\sigma \in S_n(I)} \sgnalt_{k,n-k}(\sigma) \widehat{F}\big( \wedge_{i\in I} f_i,
\Bigl( \prod_{t=1}^n a_{i_{\sigma(t)},t} \Bigr) \wedge_{i\in I} v_i \big) \\
&= \sum_{\sigma \in S_n(I)} \sgnalt_{k,n-k}(\sigma) \prod_{t=1}^n a_{i_{\sigma(t)},t}
= \widehat{\cM}_{I,J}(A).
\end{align*}
In equality $(\star)$ we have used eq.\eqref{dual.pairing.alternating}, so that the terms not proportional to $\wedge_{i\in I} v_i$ can be dropped (any permutation of the indices in $I$ has to be included).

\smallskip

\noindent 2) Let $I = (i_1,\dots,i_n) \in\altI^{(k,n-k)}(V)$. Then
\begin{align*}
h^{\wedge n}(\hat{e}_I) = h^{\wedge n} (\wedge_j v_{i_j})
= \wedge_j \big( h(v_{i_j}) \big)
= \wedge_j\Bigl( \sum_{t=1}^{d'} a_{t,i_j} v'_t \Bigr)
= \sum_{I'\in\altI_n(V')} \widehat{\cM}_{I',I}(A) \hat{e}'_{I'},
\end{align*}
where the last equality follows by 1).
\end{proof}

\begin{df} The matrix $A^{\wedge n}$, defined as in \eqref{matrixAltPower} from an even supermatrix
$A = \diag(A_{\bar0}, A_{\bar1})$, will be called the \emph{$n$-th alternating superpower} of $A$. Then $A^{\wedge n}$ can be regarded as an even supermatrix whose rows and columns are indexed by $\altI_n(V')$ and $\altI_n(V)$, respectively.
\end{df}

\begin{notation} \label{notationKernelAlt}
Let $V$ be a finite-dimensional vector superspace. Consider the morphism of affine group schemes $\widehat{\Psi}_n \colon \bGL^{\bar0}(V) \to \bGL^{\bar0}(\bigwedge^n V)$ given by
\begin{equation}
(\widehat{\Psi}_n)_R \colon \bGL^{\bar0}(V)(R) := \GL^{\bar0}_R(V_R)
\longrightarrow \bGL^{\bar0}(\bigwedge^n V)(R) := \GL^{\bar0}_R(\bigwedge^n V_R),
\quad \varphi \longmapsto \varphi^{\wedge n}.
\end{equation}
Given $\varphi \in \GL^{\bar0}_R(V_R)$, it is clear that $\varphi^{\wedge n}$ is even and invertible, with inverse $(\varphi^{\wedge n})^{-1} = (\varphi^{-1})^{\wedge n}$, and therefore $\widehat{\Psi}_n$ is well-defined. Moreover, 
$$\bmu_n(R) \cong \{ r \id_V \med r\in R, \; r^n = 1 \} \leq \ker(\widehat{\Psi}_n)_R \leq \GL^{\bar0}_R(V_R).$$
A description of the affine group scheme $\bker\widehat{\Psi}_n$ is given by the following result.
\end{notation}

\begin{proposition} \label{propKernelAlt}
Let $V$ be a finite-dimensional vector superspace. Let $\widehat{\Psi}_n$ be defined as above.
\begin{itemize}
\item[$1)$] If $V$ is even and has dimension $n$, then $\bker\widehat{\Psi}_n \simeq \bSL_n$.
\item[$2)$] Otherwise, $\bker\widehat{\Psi}_n \simeq \bmu_n$.
\end{itemize}
\end{proposition}
\begin{proof}
Let $\varphi \in \ker(\widehat{\Psi}_n)_R$ for some associative commutative unital $\FF$-algebra $R$.
Notice that Prop.~\ref{altBourbaki} also holds for the scalar extension $V_R$. Let $A = (a_{ij})_{ij} = \diag(A_{\bar0},A_{\bar1}) \in\GL^{\bar0}_{(d_{\bar0}|d_{\bar1})}(R)$ be the coordinate matrix of $\varphi \in \GL^{\bar0}_R(V_R)$ in a parity-ordered basis $\cB$, and consider the associated basis $\altB_n$ of $\bigwedge^n V$ as in \eqref{basisAlternating}. As usual, we can regard $A$ as an even supermatrix. Since $\varphi \in \ker(\widehat{\Psi}_n)_R$ and by Prop.~\ref{altBourbaki}-2), we have that $(\delta_{I,J})_{I,J\in\altI_n(V)} = A^{\wedge n} = \big( \widehat{\cM}_{I,J}(A) \big)_{I,J\in\altI_n(V)}$. 

In particular, if $V$ is even of dimension $n$ we get $(1) = A^{\wedge n} = \big(\det(A)\big)$, which proves case 1). Now consider the case where $V$ is even and $n < d = \dim V$. Let $S$ be a principal submatrix of $A$ of order $n+1$ (by principal, we mean that we take the same indices of rows and columns). Then, all minors (respectively, principal minors) of $S$ are minors (respectively, principal minors) of $A$. It follows that the principal first minors of $S$ are $1$ and the non-principal first minors of $S$ are $0$. Thus, the adjugate matrix of $S$ is $\adj(S) = I_{n+1}$. By the inversion formula, we get $\det(S)I_{n+1} = S \adj(S) = S$. Since this holds for any principal submatrix of $A$ of order $n+1$, it follows that $A = r I_d$ for some $r \in R^\times$, and therefore $\varphi = r\id_V$. Since $A^{\wedge n} = (\delta_{I,J})_{I,J\in\altI_n(V)}$, we must have $r^n = 1$. We have proven the property for the case where $V$ is even.

From now on, assume that the odd dimension is $d_{\bar1}>0$. We claim that $A_{\bar1} = r I_{d_{\bar1}}$ for some $r\in R^\times$ such that $r^n = 1$. To show the claim, we will only consider indices $i$ and $j$ corresponding to the rows and columns of the block $A_{\bar1}$. For $I = (i, \dots, i) \in \altI_n$, we get $1 = \delta_{I,I} = \widehat{\cM}_{I,I}(A) = a_{ii}^n$, and consequently $a_{ii} \in R^\times$. For $i<j$, take $I = (i,\dots,i) \in\altI_n$ and $J = (i,\dots,i,j) \in\altI_n$, and we get $0 = \delta_{I,J} = \widehat{\cM}_{I,J}(A) = a_{ii}^{n-1} a_{ij}$, so that $a_{ij} = 0$. For $i<j$, take $I = (j,\dots,j) \in\altI_n$ and $J = (i,j,\dots,j) \in\altI_n$, and we get $0 = \delta_{I,J} = \widehat{\cM}_{I,J}(A) = a_{jj}^{n-1} a_{ji}$, so that $a_{ji} = 0$. For $i<j$ and $I = (i,\dots,i,j) \in\altI_n$, we get $1 = \delta_{I,I} = \widehat{\cM}_{I,I}(A) = (n-1)a_{ii}^{n-2} a_{ij} a_{ji} + a_{ii}^{n-1} a_{jj} = a_{ii}^{n-1} a_{jj}$, so that $a_{ii} = a_{jj}$. We have proven the claim, and the case 2) follows if $V$ is odd. Assume now that $V$ is not odd. If $i$ is an index corresponding to $A_{\bar0}$ and $j$ is an index corresponding to $A_{\bar1}$, take $I = (i,j,\dots,j) \in \altI_n$, and we get $1 = \delta_{I,I} = \widehat{\cM}_{I,I}(A) = a_{ii} a_{jj}^{n-1} = a_{ii} r^{n-1}$, thus $a_{ii} = r$. If $i\neq j$ are indices corresponding to $A_{\bar0}$ and $k$ is an index corresponding to $A_{\bar1}$, take $I = (i,k,\dots,k) \in \altI_n$ and $J = (j,k,\dots,k) \in \altI_n$, so that $0 = \delta_{I,J} = \widehat{\cM}_{I,J}(A) = a_{ij} a_{kk}^{n-1} = a_{ij} r^{n-1}$, thus $a_{ij} = 0$. We have proven that $A = r I_d$, and the result follows.
\end{proof}

\bigskip

\begin{notation} \label{notationKernelModuleAlt}
We claim that if $(\varphi, h) \in\Aut_R(L_R,M_R)$, then we have $(\varphi, h^{\wedge n}) \in\Aut_R(L_R,\bigwedge^n M_R)$. Indeed, by Notation~\ref{notationKernelAlt}, we know that $h^{\wedge n} \in\GL^{\bar0}_R(\bigwedge^n M_R)$. Besides, for homogeneous $x\in L_R$, $v_k \in M_R$ we have that
\begin{align*}
h^{\wedge n}\big(x \cdot (\wedge_i v_i)\big)
&= h^{\wedge n}\Biggl(\sum_i \Bigl(\prod_{k < i} \eta_{x, v_k}\Bigr)
v_1 \wedge\cdots\wedge (x\cdot v_i) \wedge\cdots\wedge v_n\Biggr) \\
&= \sum_i \Bigl(\prod_{k < i} \eta_{x, v_k}\Bigr)
h(v_1) \wedge\cdots\wedge h(x\cdot v_i) \wedge\cdots\wedge h(v_n) \\
&= \sum_i \Bigl(\prod_{k < i} \eta_{\varphi(x), h(v_k)}\Bigr)
h(v_1) \wedge\cdots\wedge \Bigl( \varphi(x)\cdot h(v_i) \Bigr) \wedge\cdots\wedge h(v_n) \\
&= \varphi(x) \cdot \big(\wedge_i h(v_i)\big)
= \varphi(x) \cdot \big( h^{\wedge n}(\wedge_i v_i) \big),
\end{align*}
and there exists $(\varphi, h^{\wedge n})^{-1} = \big(\varphi^{-1}, (h^{-1})^{\wedge n}\big) \in \Aut_R(L_R) \times \GL^{\bar0}_R(\bigwedge^n M_R)$, which proves the claim. Consequently, there is a morphism of group schemes $\widehat{\Phi}_n \colon \bAut(L,M) \to \bAut(L,\bigwedge^n M)$ determined by
\begin{equation} \label{inclusionSchemesAlternating} \begin{split}
(\widehat{\Phi}_n)_R \colon \bAut(L,M)(R) := \Aut_R(L_R,M_R)
& \longrightarrow \bAut(L,\bigwedge^n M)(R) := \Aut_R(L_R,\bigwedge^n M_R), \\
(\varphi, h) & \longmapsto (\varphi, h^{\wedge n}).
\end{split} \end{equation}
Moreover,
$$\bmu_n(R) \cong \{ (\id_L, r \id_M) \med r\in R, \; r^n = 1 \} \leq \Aut_R(L_R,M_R),$$
thus $\bmu_n \lesssim \bker (\widehat{\Phi}_n)$.
\end{notation}

\smallskip

\begin{proposition} \label{propKernelModuleAlt}
Let $\widehat{\Phi}_n$ be defined as above. Then
$\bker\widehat{\Phi}_n = \bAut(L,M) \cap (\bTGS \times \bker\widehat{\Psi}_n)$. In particular:
\begin{itemize}
\item[$1)$] If $M$ is even and has dimension $n$, then
$\bker\widehat{\Phi}_n = \bAut(L,M) \cap (\bTGS \times \bSL_n)$.
\item[$2)$] Otherwise $\bker\widehat{\Phi}_n = \bTGS \times \bmu_n \simeq \bmu_n$,
and therefore $\bAut(L,M)/\bmu_n \lesssim \bAut(L,\bigwedge^n M)$.
\end{itemize}
\end{proposition}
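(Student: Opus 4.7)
The plan is to unwind the definition of $\widehat{\Phi}_n$ directly. An $R$-point $(\varphi,h)\in\bAut(L,M)(R)$ lies in $\bker\widehat{\Phi}_n(R)$ if and only if $(\varphi,h^{\wedge n})=(\id_L,\id_{\bigwedge^n M_R})$, i.e., $\varphi=\id_L$ and $h\in\ker(\widehat{\Psi}_n)_R$. Since $\bTGS$ denotes the trivial group scheme, the pairs $(\id_L,h)$ with $h\in\bker\widehat{\Psi}_n$ are precisely the $R$-points of $\bTGS\times\bker\widehat{\Psi}_n$, and intersecting with $\bAut(L,M)$ enforces the compatibility condition $h(x\cdot v)=x\cdot h(v)$. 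This gives the first equality $\bker\widehat{\Phi}_n = \bAut(L,M)\cap(\bTGS\times\bker\widehat{\Psi}_n)$ essentially tautologically.

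For part 1), I would simply invoke Proposition~\ref{propKernelAlt}-1): when $M$ is even of dimension $n$ we have $\bker\widehat{\Psi}_n\simeq\bSL_n$, and substitution yields the claim. (I would not try to describe the intersection $\bAut(L,M)\cap(\bTGS\times\bSL_n)$ more concretely, since in general it depends on the $L$-action on $M$.)

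For part 2), I would use Proposition~\ref{propKernelAlt}-2) to get $\bker\widehat{\Psi}_n\simeq\bmu_n$, realized as the scalar maps $\{r\id_M\med r^n=1\}$. The key observation — and essentially the only content of this part — is that any scalar map $h=r\id_{M_R}$ commutes with the $L_R$-action, i.e.\ $h(x\cdot v)=r(x\cdot v)=x\cdot(rv)=x\cdot h(v)$, so the pair $(\id_L,r\id_M)$ automatically belongs to $\Aut_R(L_R,M_R)$. Therefore the intersection $\bAut(L,M)\cap(\bTGS\times\bmu_n)$ is all of $\bTGS\times\bmu_n$, which is canonically isomorphic to $\bmu_n$.

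The final assertion $\bAut(L,M)/\bmu_n\lesssim\bAut(L,\bigwedge^n M)$ then follows from the first isomorphism theorem for affine group schemes applied to $\widehat{\Phi}_n$: its image is a closed subgroup scheme of $\bAut(L,\bigwedge^n M)$ isomorphic to the quotient by the kernel $\bmu_n$. No step is a genuine obstacle here; the proposition is really a packaging result, and the substantive work was already carried out in Proposition~\ref{propKernelAlt}.
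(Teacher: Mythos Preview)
Your proposal is correct and follows essentially the same approach as the paper: the paper's proof is the one-liner ``This follows from Proposition~\ref{propKernelAlt} and Notation~\ref{notationKernelModuleAlt},'' and what you have written is precisely the unpacking of those two references, including the observation (already recorded in Notation~\ref{notationKernelModuleAlt}) that the scalar pairs $(\id_L, r\id_M)$ lie in $\Aut_R(L_R,M_R)$.
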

\begin{proof}
This follows from Proposition~\ref{propKernelAlt} and Notation~\ref{notationKernelModuleAlt}.
\end{proof}

\section{Symmetric superpowers of Lie supermodules} \label{section.supermodules.symmetric} \hfill

Throughout this section, unless otherwise stated, we will assume that $M$ is a nonzero finite-dimensional $L$-supermodule, where $L$ is a Lie superalgebra. As in the previous section, the results where the $L$-action is unimportant will be stated in terms of a finite-dimensional vector superspace $V$ (which is just the case $M = V$, $L = 0$).

\begin{dfs}
Let $n \geq 2$ and consider the vector subsuperspace of $\bigboxtimes^n M$ given by
\begin{equation} \label{submodule_symmetric}
\widecheck{R}_n = \widecheck{R}_n(M) := \lspan\{ \otimes_k v_k - \eta_{v_i, v_{i+1}} \tau_{i,i+1}( \otimes_k v_k )
\med 0 \neq v_k \in M_{\bar0} \cup M_{\bar1}, 1 \leq i < n \},
\end{equation}
where $\tau_{ij}$ is the map defined in eq.\eqref{tauSwap} for $1\leq i < j \leq n$.
We will also denote $\widecheck{R}^*_n = \widecheck{R}^*_n(M) := \widecheck{R}_n(M^*)$.
It is not too hard to see that
\begin{equation} \label{submodule_symmetric2}
\widecheck{R}_n = \{ v - \sigma \cdot v \med v \in \bigboxtimes^n M, \sigma \in S_n \},
\end{equation}
with the action $\sigma \cdot v$ defined as in Remarks~\ref{remarks.referee}-1).

The elements of the vector superspace $\bigvee^n M := (\bigboxtimes^n M) / \widecheck{R}_n$ will be called \emph{symmetric supertensors}. Note that if $M$ is even, then $\bigvee^n M$ is a symmetric power of $M$, and if $M$ is odd, then (as a vector space) $\bigvee^n M$ is an alternating power of $M$. We will use the convention $\bigvee^1 M := M$. The projection of a pure supertensor $v_1 \otimes \dots \otimes v_n$ in $\bigvee^n M$ will be denoted by $v_1 \vee \dots \vee v_n$, or just $\vee_i v_i$, and referred to as a \emph{pure symmetric supertensor}. 
Note that the parity map of $\bigvee^n M$ is given by $\varepsilon(\vee_i v_i) := \sum_i \varepsilon(v_i)$ for homogeneous elements $v_i \in M$. We will say that $\vee_i v_i$ is \emph{parity-ordered} if there exists $k\in\{0,1,\dots,n\}$ such that $v_i\in M_{\bar0}$ for $i \leq k$ and $v_j\in M_{\bar1}$ for $j > k$.

Note that $\widecheck{R}_n$ is an $L$-subsupermodule of $\bigboxtimes^n M$, because for elements $x \in L$, $v \in \bigboxtimes^n M$, $\sigma \in S_n$, we have
$$ x \cdot \big( v - \sigma \cdot v \big)
= x \cdot v - \sigma \cdot (x \cdot v) \in \widecheck{R}_n. $$
Consequently, $\bigvee^n M$ becomes an $L$-supermodule with the action given by
\begin{equation} \label{eq.action.symmetric}
x \cdot (\vee_i v_i) := \sum_{i=1}^n  \Bigl( \prod_{k < i} \eta_{x, v_k} \Bigr)
v_1 \vee \dots \vee (x \cdot v_i) \vee \dots \vee v_n
\end{equation}
for each homogeneous $x \in L$, $v_i \in M$. 
We will refer to $(L, \bigvee^n M)$ as the \emph{$n$-th symmetric superpower} of the Lie supermodule $(L,M)$.
\end{dfs}

\begin{remark} \label{remark.restrictions.symmetric}
Let $d_{\bar0} := \dim M_{\bar0}$ and $d_{\bar1} := \dim M_{\bar1}$ be the even and odd dimensions of the Lie supermodule $M$, and $d := \dim M = d_{\bar0} + d_{\bar1}$. Note that for an odd Lie supermodule $M$, $\bigvee^n M$ (regarded as a vector space) is just the usual alternating power, so that $\bigvee^n M = 0$ for $n > \dim M$. Without further mention, we will only consider the cases with $\bigvee^n M \neq 0$, i.e., we will assume $\boxed{\text{$n \leq d_{\bar1}$ if $d_{\bar0} = 0$}}$.
On the other hand, we will also assume
$\boxed{\text{$\chr\FF = 0$ or $\chr\FF > n$ if $d_{\bar0} > 0$}}$,
which will grant nondegeneracy for certain bilinear form $\widecheck{F}$ defined in \eqref{eq.F.vee}.
\end{remark}

\medskip

\begin{notation} \label{notationPerdet1}
Let $L$ be a Lie superalgebra and $M$ a finite-dimensional $L$-supermodule. Our next goal is to construct a bilinear pairing of $L$-supermodules $\bigvee^n M^* \times \bigvee^n M \to \FF$.

Let $\langle\cdot,\cdot\rangle \colon \bigotimes^n M^* \times \bigotimes^n M \to \FF$ be the bilinear form defined as in \eqref{eq.tensor.superproduct}, where we consider $M_i = M$ for each $i = 1,\ldots,n$. Now define a new bilinear form by
\begin{align} \label{def.F.no.vee}
F \colon \bigboxtimes^n M^* \times \bigboxtimes^n M \longrightarrow \FF, \qquad F(f,v) := \sum_{\sigma \in S_n} \langle f, \sigma \cdot v \rangle,
\end{align}
for all $f \in \bigotimes^n M^*$, $v \in \bigotimes^n M$. 
With the same arguments used in Notation~\ref{notationDetper1}, one checks that
\begin{align} \label{property.F.permutations.bis}
F(f, \sigma \cdot v) = F(f,v) = F(\sigma \cdot f, v),
\end{align}
for all $f \in \bigotimes^n M^*$, $v \in \bigotimes^n M$, $\sigma \in S_n$,
and also that $F$ is a bilinear pairing of $L$-supermodules.

By \eqref{property.F.permutations.bis}, it follows that $F$ satisfies the properties
\begin{align*}
F\big( f, v - \sigma \cdot v \big) = 0 = F\big( f - \sigma \cdot f, v\big),
\end{align*}
so that $F(\bigotimes^n M^*, \widecheck{R}_n) = 0 = F(\widecheck{R}^*_n, \bigotimes^n M)$.
Thus $F$ induces a bilinear map
\begin{equation} \label{eq.F.vee}
\widecheck{F} \colon \bigvee^n M^* \times \bigvee^n M \to \FF,
\end{equation}
which is also a pairing of $L$-supermodules. (We will show in Notation~\ref{notation.basis.symmetric} that $\widecheck{F}$ is nondegenerate.)
\end{notation}

\medskip

\begin{notation} \label{notationPerdet2}
Our aim now is to obtain an explicit expression for the bilinear form $\widecheck{F}$ in \eqref{eq.F.vee}.

As in Notation~\ref{notationDetper2}, we will consider the vector superspaces $\bigotimes_\alpha M$ for $\alpha\in \ZZ_2^n$, and the vector superspaces $\bigotimes^{(k,n-k)}M$. For $\alpha = (\alpha_1,\dots,\alpha_n)\in \ZZ_2^n$, consider again the ordered sets $\iota_{\bar0}(\alpha)$ and $\iota_{\bar1}(\alpha)$. We will denote the image of $\bigotimes^{(k,n-k)}M$ on the quotient $\bigvee^n M$ by $\bigvee^{(k, n-k)} M$. (For $k$ such that $n-k > d_{\bar1}$, we have $\bigvee^{(k,n-k)}M = 0$, which can be proven as for alternating powers in the non-super case.) The relations in $\widecheck{R}_n$ show that $\bigvee^{(k,n-k)}M$ is also the image of $\bigotimes^k M_{\bar0} \otimes \bigotimes^{n-k} M_{\bar1}$ on the quotient.

Let $\widecheck{R}_n^{(k,n-k)} := \widecheck{R}_n \cap \bigotimes^{(k,n-k)}M$, then it is easy to see that $\widecheck{R}_n = \bigoplus_{k=0}^n \widecheck{R}_n^{(k,n-k)}$, and it follows that
\begin{equation} \label{decompositionSymmetric}
\bigvee^n M = \bigoplus_{k=0}^{\min(d_{\bar1},n)} \bigvee^{(n-k, k)} M.
\end{equation}
Let $\widecheck{F}^{(k,n-k)}$ denote the restriction of $\widecheck{F}$ to $\bigvee^{(k, n-k)} M^* \times \bigvee^{(k, n-k)} M$. By \eqref{def.F.no.hat} and \eqref{decompositionSymmetric}, we have that
\begin{equation} \label{decompositionSymmetricF}
\widecheck{F} = \perp_{k=0}^{\min(d_{\bar1},n)} \widecheck{F}^{(n-k,k)}.
\end{equation}

For convenience, we introduce the $\perdet$ operators, defined as a combination of the permanent and the determinant. For $0\leq k\leq n$ and $A = (a_{ij})_{ij} \in \cM_n(\FF)$, set
\begin{equation} \label{perdet}
\perdet_{k,n-k}(A) :=
\per\Bigl( (a_{ij})_{1 \leq i,j \leq k} \Bigr)
\det\Bigl( (a_{ij})_{k < i,j \leq n} \Bigr),
\end{equation}
where we use the convention $\det(\emptyset) = 1 = \per(\emptyset)$ for the ``empty submatrix''.
Also, consider $\omega_k$ defined as in \eqref{def.omega_n}.

Fix $k$ with $0\leq n-k \leq \min(d_{\bar1}, n)$. With the same arguments as in Notation~\ref{notationDetper2}, it is not too hard to see that $\widecheck{F}^{(k,n-k)}$ is given, for parity-ordered elements, by
\begin{equation} \label{eq.F.vee.restriction} \begin{split}
\widecheck{F}^{(k,n-k)} \colon &
\bigvee^{(k, n-k)} M^* \times \bigvee^{(k, n-k)} M \longrightarrow \FF, \\
& (\vee_i f_i, \vee_j v_j) \longmapsto
\omega_{n-k} \perdet_{k,n-k}\Bigl( \big( \langle f_i, v_j \rangle \big)_{ij} \Bigr).
\end{split} \end{equation}

In general, for parity-ordered elements $\vee_i f_i \in\bigvee^{(r, n-r)} M^*$, $\vee_j v_j \in\bigvee^{(s, n-s)} M$, we have
\begin{equation} \label{eq.F.vee.unrestricted}
\widecheck{F}( \vee_i f_i, \vee_j v_j )
= \omega_{n-r} \perdet_{r,n-r}\Bigl( \big( \langle f_i, v_j \rangle \big)_{ij} \Bigr)
= \omega_{n-s} \perdet_{s,n-s}\Bigl( \big( \langle f_i, v_j \rangle \big)_{ij} \Bigr),
\end{equation}
because $\perdet_{r,n-r}\Bigl( \big( \langle f_i, v_j \rangle \big)_{ij} \Bigr) = 0
= \perdet_{s,n-s}\Bigl( \big( \langle f_i, v_j \rangle \big)_{ij} \Bigr)$ if $r \neq s$.
\end{notation}

\medskip

\begin{notation} \label{notation.basis.symmetric}
Finally, we will describe some properties of the bilinear form $\widecheck{F}$ in \eqref{eq.F.vee}.

For $k$ such that $0\leq n-k \leq \min(d_{\bar1},n)$, consider the family of ordered $n$-tuples
\begin{equation}
\symI^{(k,n-k)} = \symI^{(k,n-k)}(M) := \{ I=(i_1,\dots,i_n)
\med 1 \leq i_1 \leq \dots \leq i_k \leq d_{\bar0} < i_{k+1} < \dots < i_n \leq d \},
\end{equation}
and 
\begin{equation}
\symI_n = \symI_n(M) := \bigcup_{0 \leq k \leq \min(d_{\bar1},n)} \symI^{(n-k,k)}(M).
\end{equation}
The elements of $\symI_n$ and $\symI^{(k,n-k)}$ will be used as sets of ordered indices of supermatrices, where $k$ and $n-k$ correspond to the number of indices coming from the even and odd subspaces, respectively.

Fix a basis $\cB = \{ v_i \}_{i=1}^d$ of $M$ such that $\cB_{\bar0} := \{ v_i \}_{i=1}^{d_{\bar0}}$ and $\cB_{\bar1} := \{ v_i \}_{i=d_{\bar0}+1}^d$ are bases of $M_{\bar0}$ and $M_{\bar1}$, respectively. Let $\cB^* = \{ f_i \}_{i=1}^d$ be the dual basis of $\cB$. Thus
$\cB^*_{\bar0} := \{ f_i \}_{i=1}^{d_{\bar0}}$
and
$\cB^*_{\bar1} := \{ f_i \}_{i=d_{\bar0}+1}^d$
are the dual bases of $\cB_{\bar0}$ and $\cB_{\bar1}$.
The relations given by $\widecheck{R}_n$ show that $\bigvee^n M$ is spanned by the set
\begin{equation} \label{basisSymmetric}
\symB_n := \{ \check{e}_I := \vee_{i\in I} v_i \med I \in\symI_n \},
\end{equation}
and similarly $\bigvee^n M^*$ is spanned by
\begin{equation} \label{dualBasisSymmetric}
\symB^*_n := \{ \check{e}^*_I := \vee_{i\in I} f_i \med I \in\symI_n \}.
\end{equation}

Consider $\kappa_I$ defined as in \eqref{kappa.coef}. Then, by \eqref{eq.F.vee.unrestricted} it is clear that
\begin{equation} \label{dual.pairing.symmetric}
\widecheck{F}(\check{e}^*_I, \check{e}_J) = \kappa_I \delta_{I,J} = \kappa_J \delta_{I,J}.
\end{equation}

Since we are assuming that $\chr\FF = 0$ or $\chr\FF > n$ whenever $d_{\bar0} > 0$, it follows from \eqref{dual.pairing.symmetric} that $\widecheck{F}$ defines a dual pairing (i.e., $\widecheck{F}$ is nondegenerate), and also that $\symB_n$ and $\symB^*_n$ are bases of $\bigvee^n M$ and $\bigvee^n M^*$, respectively. Unfortunately, $\symB_n$ and $\symB^*_n$ are not $\widecheck{F}$-dual bases in general. It is clear that
\begin{equation}
\dim \bigvee^n M = |\symB_n| = |\symI_n|
= \sum_{k=0}^{\min(d_{\bar1},n)} {d_{\bar1} \choose k} {d_{\bar0} + (n - k) - 1 \choose n-k}.
\end{equation}
Since $\widecheck{F}$ is a dual pairing of supermodules, it defines an isomorphism of $L$-supermodules $\bigvee^n M^* \cong (\bigvee^n M)^*$, $f \mapsto \widecheck{F}(f,\cdot)$.
\end{notation}

\begin{df}
The Lie supermodules duality map $\widecheck{F}$ defined in \eqref{eq.F.vee} will be referred to as the \emph{$n$-th symmetric superpower} of the corresponding Lie supermodules duality map $M^* \times M \to \FF$. It will be denoted by $\langle\cdot,\cdot\rangle$ in further sections.
\end{df}

\begin{notation}
Let $L$ be a Lie superalgebra. Given a homomorphism of finite-dimensional Lie supermodules, $h \in \Hom_L(M,N)$, it is clear that $h^{\otimes n} \in \Hom_L(\bigotimes^n M, \bigotimes^n N)$, and $h^{\otimes n}\big(\widecheck{R}_n(M)\big) \subseteq \widecheck{R}_n(N)$ because $h$ is even. Thus $h^{\otimes n}$ induces an element $h^{\vee n} \in \Hom_L(\bigvee^n M, \bigvee^n N)$, given by $h^{\vee n}(\vee_i x_i) = \vee_i h(x_i)$ for any elements $x_i\in M$. It is also clear that the composition of two homomorphisms, $h_1$ and $h_2$, satisfies the property
$(h_2 \circ h_1)^{\vee n} = h_2^{\vee n} \circ h_1^{\vee n}$.

Consider now the case $M = N$, i.e., $h \in \End_L(M)$. Let $h^*$ be the dual map of $h$ for the bilinear pairing $\langle\cdot,\cdot\rangle \colon M^* \times M \to \FF$, and $(h^{\vee n})^*$ the $\widecheck{F}$-dual map of $h^{\vee n}$. Then for parity-ordered elements $\vee_i f_i \in \bigvee^{(k,n-k)} M^*$ and $\vee_j v_j \in \bigvee^{(k,n-k)} M$, we get
\begin{align*}
\widecheck{F}\big( \vee_i & f_i, h^{\vee n}(\vee_j v_j) \big)
= \widecheck{F}\big( \vee_i f_i, \vee_j h(v_j) \big)
= \omega_{n-k} \perdet_{k,n-k}\Bigl(\big( \langle f_i,h(v_j) \rangle \big)_{ij}\Bigr) \\
&= \omega_{n-k} \perdet_{k,n-k}\Bigl(\big( \langle h^*(f_i),v_j \rangle \big)_{ij}\Bigr)
= \widecheck{F}\big( \vee_i h^*(f_i), \vee_j v_j \big)
= \widecheck{F}\big( (h^*)^{\vee n}(\vee_i f_i), \vee_j v_j \big),
\end{align*}
thus $(h^*)^{\vee n} = (h^{\vee n})^*$.
\end{notation}

\smallskip

\begin{notation} \label{notationSgnSym}
Let $k$ be such that $0\leq n-k \leq \min(d_{\bar1}, n)$ and identify $S_k \times S_{n-k}$ with the subgroup 
of $S_n$ that fixes the sets $\{1,\dots,k\}$ and $\{k+1,\dots,n\}$. Let $I = (i_1,\dots,i_n) \in \symI^{(k,n-k)}$ 
and consider the parity-ordered element $\check{e}_I := \vee_{i\in I} v_i = \vee_t v_{i_t} \in \symB_n$. For each permutation $\sigma\in S_n$, let $\sgnsym_{k,n-k}(\sigma)$ denote the sign defined by
\begin{equation} \label{sgnSym}
\vee_{i\in I} v_i = \sgnsym_{k,n-k}(\sigma) \vee_t v_{i_{\sigma(t)}},
\end{equation}
and note that $\sgnsym_{k,n-k}(\sigma\rho) = \sgn(\rho)$ for $\sigma\in S_k$, $\rho\in S_{n-k}$.
\end{notation}

\smallskip

\begin{notation}
Let $V$ be a finite-dimensional vector superspace. Let $I = (i_1,\dots,i_n) \in\symI^{(p,n-p)}(V)$, $J = (j_1,\dots,j_n) \in\symI^{(q,n-q)}(V)$ with $p$ and $q$ such that $0\leq n-p,n-q \leq \min(d_{\bar1}, n)$. Take an even supermatrix $A = (a_{ij})_{ij} = \diag(A_{\bar0}, A_{\bar1}) \in \cM_{(d_{\bar0}|d_{\bar1}) \times (r|s)}(\FF) = \cM_{d \times n}(\FF)$, with $A_{\bar0} \in\cM_{d_{\bar0} \times r}(\FF)$ and $A_{\bar1} \in\cM_{d_{\bar1} \times s}(\FF)$. Identify as subgroup $S_p \times S_{n-p} \leq S_n$ (as in Notation~\ref{notationSgnSym}). We define the \emph{(symmetric) $(I,J)$-superminor} of $A$ by
\begin{equation}
\widecheck{\cM}_{I,J}(A) := \sum_{\sigma \in S_n(I)}
\sgnsym_{p,n-p}(\sigma) \prod_{t=1}^n a_{i_{\sigma(t)}, j_t}.
\end{equation}
We may also refer to symmetric superminors as \emph{$\perdet$-superminors}.
By the block structure of $A$ it follows that
\begin{equation}
\widecheck{\cM}_{I,J}(A) = 
\Bigl( \sum_{\sigma \in S_p(I)} 
\prod_{t=1}^p a_{i_{\sigma(t)}, j_t} \Bigr)
\Bigl( \sum_{\rho \in S_{n-p}} \sgn(\rho)
\prod_{t=p+1}^n a_{i_{\rho(t)}, j_t} \Big).
\end{equation}
Note that $\widecheck{\cM}_{I,J}(A) = 0$ if $p \neq q$. If $I = J$, then the superminor will be said to be a \emph{principal} superminor.

Consider the case with $n = d-1$. For $1 \leq i,j \leq n$, let $I_i = (1,\dots,i-1,i+1,\dots,d)$, $I_j = (1,\dots,j-1,j+1,\dots,d)$. Then the term $\widecheck{\cM}_{ij}(A) := \widecheck{\cM}_{I_i,I_j}(A)$ will be called the \emph{(symmetric) $(i,j)$-superminor} of $A$. Also, $(i,j)$-superminors will be referred to as \emph{(symmetric) first superminors}.
\end{notation}

\bigskip

\begin{proposition} \label{symBourbaki}
Let $V$ and $V'$ be finite-dimensional vector superspaces. Let $\cB = \{ v_i \}_{i=1}^d$ and $\cB' = \{ v'_i \}_{i=1}^{d'}$ be parity-ordered bases of $V$ and $V'$, respectively. Consider the associated bases $\symB_n = \{\check{e}_I\}_{I\in\symI_n(V)}$ and $\symB'_n = \{\check{e}'_{I'}\}_{I'\in\symI_n(V')}$ of $\bigvee^n V$ and $\bigvee^n V'$, defined as in \eqref{basisSymmetric} by using $\cB$ and $\cB'$. Then:

\begin{itemize}
\item[$1)$] Take a parity-ordered subset $\{w_j\}_{j=1}^n \subseteq V$, with $\{w_j\}_{j=1}^r \subseteq V_{\bar0}$ and $\{w_j\}_{j=r+1}^n \subseteq V_{\bar1}$ for some $r$ such that $0\leq n-r \leq \min(d_{\bar1}, n)$, let $s = n - r$, and set $w_j = \sum_{i=1}^d a_{ij} v_i$. Consider the even supermatrix $A = (a_{ij})_{ij} = \diag(A_{\bar0}, A_{\bar1}) \in \cM_{(d_{\bar0}|d_{\bar1}) \times (r|s)}(\FF) = \cM_{d \times n}(\FF)$, with $A_{\bar0} \in\cM_{d_{\bar0} \times r}(\FF)$ and $A_{\bar1} \in\cM_{d_{\bar1} \times s}(\FF)$. Let $J = (1,\dots,n)$. Then
\begin{equation}
\vee_j w_j = \sum_{I \in\symI_n(V)} \widecheck{\cM}_{I,J}(A) \check{e}_I,
\end{equation}
where $\widecheck{\cM}_{I,J}(A) = 0$ if $I \notin \symI^{(r,n-r)}(V)$.

\item[$2)$] Let $h \colon V \to V'$ be an even homomorphism of vector superspaces, and let $A = (a_{ij})_{ij} = \diag(A_{\bar0}, A_{\bar1})\in \cM_{(d'_{\bar0}|d'_{\bar1}) \times (d_{\bar0}|d_{\bar1})}(\FF) = \cM_{d' \times d}(\FF)$ be its coordinate matrix on the bases $\cB$ and $\cB'$, which is an even supermatrix with $A_{\bar0} \in\cM_{d'_{\bar0} \times d_{\bar0}}(\FF)$ and $A_{\bar1} \in\cM_{d'_{\bar1} \times d_{\bar1}}(\FF)$. Then, the coordinate matrix of $h^{\vee n}$ in the bases $\symB_n$ and $\symB'_n$ is
\begin{equation} \label{matrixSymPower}
A^{\vee n} := \big( \widecheck{\cM}_{I',I}(A) \big)_{I'\in\symI_n(V'), I \in\symI_n(V)}.
\end{equation}
(Here, $\widecheck{\cM}_{I',I}(A) = 0$ if $r \neq s$, where $I\in\symI^{(r,n-r)}(V)$, $I'\in\symI^{(s,n-s)}(V')$.)
\end{itemize}
\end{proposition}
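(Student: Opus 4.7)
The plan is to mirror the proof of Proposition~\ref{altBourbaki}, replacing $\widehat{R}_n$, $\widehat{F}$, $\altB_n$, $\sgnalt_{k,n-k}$ and $\widehat{\cM}_{I,J}$ by their symmetric counterparts $\widecheck{R}_n$, $\widecheck{F}$, $\symB_n$, $\sgnsym_{k,n-k}$ and $\widecheck{\cM}_{I,J}$. The key tool is again the dual pairing: by the discussion in Notation~\ref{notationPerdet}, $\symB^*_n$ is the $\widecheck{F}$-dual basis of $\symB_n$, so for any $\omega \in \bigvee^n V$ and any $I \in \symI_n(V)$, the coordinate of $\omega$ at $\check{e}_I$ is exactly $\widecheck{F}(\check{e}^*_I, \omega)$.

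For part 1), I would expand by multilinearity
\[
\vee_j w_j = \vee_j\Bigl(\sum_{i=1}^d a_{ij} v_i\Bigr)
= \sum_{k_1,\dots,k_n} \Bigl(\prod_{t=1}^n a_{k_t,t}\Bigr)\vee_t v_{k_t},
\]
and evaluate $\widecheck{F}(\check{e}^*_I,\cdot)$ term by term. Since $\symB^*_n$ is dual to $\symB_n$, only those index tuples $(k_1,\dots,k_n)$ that are rearrangements of $I = (i_1,\dots,i_n)$ contribute. Parametrizing these rearrangements without repetition by a left transversal $S_n(I)$ of $\Stab_{S_n}(I)$ (Notation~\ref{notationPermutationRepetitions}) and applying \eqref{sgnSym}, we get
\[
\widecheck{F}(\check{e}^*_I,\vee_j w_j)
= \sum_{\sigma\in S_n(I)} \sgnsym_{r,n-r}(\sigma)\prod_{t=1}^n a_{i_{\sigma(t)},t}
= \widecheck{\cM}_{I,J}(A),
\]
which is zero unless $I\in\symI^{(r,n-r)}(V)$ by the block structure of the even supermatrix $A$ and the fact that $\sgnsym_{p,n-p}(\sigma\rho)=\sgn(\rho)$ for $(\sigma,\rho)\in S_p\times S_{n-p}$. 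Part 2) then follows routinely by applying 1) to the set $\{w_j := h(v_{i_j})\}_{j=1}^n$ associated to a fixed $I=(i_1,\dots,i_n)\in\symI_n(V)$, whose coordinate supermatrix on $\cB'$ is the submatrix of $A$ formed by its columns $i_1,\dots,i_n$; summing the expansion of $\vee_j h(v_{i_j})$ over the basis $\symB'_n$ yields the desired expression for $h^{\vee n}(\check{e}_I)$.

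The main subtlety I expect is bookkeeping around the sign convention: in contrast with the alternating setting, the sign $\sgnsym_{p,n-p}$ is contributed by the permutation of the odd indices (where strict order forbids repetition), while the permutation on the even block (allowing repetitions) does not contribute a sign. This asymmetry forces the use of the transversal $S_n(I)$ rather than all of $S_n$, since the stabilizer of $I$ acts nontrivially on the even block whenever $I$ has repeated even entries, and it is precisely the $\sgnsym$-orbit sum that yields $\widecheck{\cM}_{I,J}(A)$ with the correct factor. Once this orbit/stabilizer argument is carried out cleanly, both assertions follow.
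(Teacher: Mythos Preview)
Your proposal is correct and follows essentially the same approach as the paper: expand $\vee_j w_j$ multilinearly, use that $\symB^*_n$ is the $\widecheck{F}$-dual basis of $\symB_n$ to read off coordinates, parametrize the surviving terms by a transversal $S_n(I)$ and apply \eqref{sgnSym} to obtain $\widecheck{\cM}_{I,J}(A)$, then derive 2) from 1) by taking $w_j = h(v_{i_j})$. The only cosmetic point is that in your displayed coefficient computation you write $\sgnsym_{r,n-r}$ with $r$ the type of $J$, whereas the definition of $\widecheck{\cM}_{I,J}$ uses the type of $I$; since you immediately observe the term vanishes unless $I\in\symI^{(r,n-r)}(V)$, this is harmless.
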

\begin{proof}
The proof is analogous to the one of Proposition~\ref{altBourbaki}.
\end{proof}

\begin{df} The matrix $A^{\vee n}$, defined as in \eqref{matrixSymPower} from an even supermatrix
$A = \diag(A_{\bar0}, A_{\bar1})$, will be called the \emph{$n$-th symmetric superpower} of $A$. Then $A^{\vee n}$ can be regarded as an even supermatrix whose rows and columns are indexed by $\symI_n(V')$ and $\symI_n(V)$, respectively.
\end{df}

\begin{notation} \label{notationKernelSym}
Let $V$ be a finite-dimensional vector superspace.
Consider the morphism of affine group schemes $\widecheck{\Psi}_n \colon \bGL^{\bar0}(V) \to \bGL^{\bar0}(\bigvee^n V)$ given by
\begin{equation}
(\widecheck{\Psi}_n)_R \colon \bGL^{\bar0}(V)(R) := \GL^{\bar0}_R(V_R)
\longrightarrow \bGL^{\bar0}(\bigvee^n V)(R) := \GL^{\bar0}_R(\bigvee^n V_R),
\quad \varphi \longmapsto \varphi^{\vee n}.
\end{equation}
Given $\varphi \in \GL^{\bar0}_R(V_R)$, it is clear that $\varphi^{\vee n}$ is even and invertible, with inverse $(\varphi^{\vee n})^{-1} = (\varphi^{-1})^{\vee n}$, and therefore $\widecheck{\Psi}_n$ is well-defined. Moreover, 
$$\bmu_n(R) \cong \{ r \id_V \med r\in R, \; r^n = 1 \} \leq \ker(\widecheck{\Psi}_n)_R \leq \GL^{\bar0}_R(V_R).$$
A description of the affine group scheme $\bker\widecheck{\Psi}_n$ is given by the following result.
\end{notation}

\begin{proposition} \label{propKernelSym}
Let $\widecheck{\Psi}_n$ be defined as above.
\begin{itemize}
\item[$1)$] If $V$ is odd and has dimension $n$, then $\bker\widecheck{\Psi}_n \simeq \bSL_n$.
\item[$2)$] Otherwise, $\bker\widecheck{\Psi}_n \simeq \bmu_n$.
\end{itemize}
\end{proposition}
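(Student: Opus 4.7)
The plan is to mirror the proof of Proposition~\ref{propKernelAlt}, exchanging the roles of the even and odd components and invoking Proposition~\ref{symBourbaki}-2) together with $\perdet$-superminors in place of Proposition~\ref{altBourbaki}-2) and $\detper$-superminors. Fix $\varphi \in \ker(\widecheck{\Psi}_n)_R$ with coordinate matrix $A = \diag(A_{\bar0},A_{\bar1})$ in a parity-ordered basis $\cB$ of $V_R$. By Proposition~\ref{symBourbaki}-2), $A^{\vee n}$ is the identity on $\bigvee^n V_R$ in the associated basis, i.e., $\widecheck{\cM}_{I,J}(A) = \delta_{I,J}$ for every $I,J \in \symI_n(V)$.

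For case 1), $V$ odd of dimension $n$ forces $\symI_n(V) = \{(1,\dots,n)\}$ to be a singleton, and $\widecheck{\cM}_{I,I}(A) = \perdet_{0,n}(A) = \det(A) = 1$, giving $\bker\widecheck{\Psi}_n \simeq \bSL_n$. In case 2) I distinguish two subcases. If $V$ is odd with $n < d_{\bar1}$, then the $\widecheck{\cM}_{I,J}$ for $I,J \in \symI^{(0,n)}$ are, up to sign, ordinary $n \times n$ minors of $A_{\bar1}$, and the adjugate/inversion argument from the proof of Proposition~\ref{propKernelAlt} (the ``$V$ even, $n<d$'' subcase) applies verbatim to any principal submatrix $S$ of $A_{\bar1}$ of order $n+1$: principal first minors of $S$ are $1$, non-principal ones vanish, so $\adj(S) = I_{n+1}$ and $S = \det(S)I_{n+1}$ is scalar; hence $A_{\bar1} = rI_{d_{\bar1}}$ with $r^n = 1$.

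The remaining subcase has $d_{\bar0} \geq 1$, where the $\perdet$-superminors involve permanents and no adjugate shortcut is available; I would proceed by explicit superminor identities. First, $I = J = (i,\dots,i) \in \symI^{(n,0)}$ gives $a_{ii}^n = 1$, so $a_{ii}$ is a unit; next, $I = (i,\dots,i)$ and $J = (i,\dots,i,j)$ on the even block with $j \neq i$ force $a_{ii}^{n-1}a_{ij} = 0$, so $A_{\bar0}$ is diagonal; then $I = J = (i,\dots,i,j)$ with both indices on the even block expands, using $|S_n(I)| = n$ and $\sgnsym_{n,0} \equiv 1$, to $\widecheck{\cM}_{I,I}(A) = a_{ii}^{n-1}a_{jj} + (n-1)a_{ii}^{n-2}a_{ij}a_{ji} = 1$, which together with $a_{ij} = a_{ji} = 0$ and $a_{ii}^n = 1$ forces $a_{jj} = a_{ii}$; thus $A_{\bar0} = rI_{d_{\bar0}}$ with $r^n = 1$. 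If in addition $d_{\bar1} > 0$, I would use mixed $I = J = (i,\dots,i,j) \in \symI^{(n-1,1)}$ with $i$ even and $j$ odd: the block-diagonal shape of $A$ kills all contributions except the stabilizer representative $\sigma = e$, yielding $a_{ii}^{n-1}a_{jj} = 1$ and hence $a_{jj} = r$; and taking $I = (i,\dots,i,j) \neq J = (i,\dots,i,k)$ with $j,k$ distinct odd indices gives $a_{ii}^{n-1}a_{jk} = 0$, so $A_{\bar1}$ is diagonal and equal to $rI_{d_{\bar1}}$. Throughout these mixed computations, $\sgnsym_{n-1,1}$ evaluates to $+1$ on the surviving transversal elements because moving an odd index past even ones carries no sign.

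The main technical point, and the only real difference from the alternating setting, is that permanents do not satisfy an adjugate identity, so the adjugate shortcut is unavailable whenever the even block is involved; this forces the direct superminor calculations in the last subcase. These are short and follow the same template used for the odd block in the proof of Proposition~\ref{propKernelAlt}, so no genuine new obstacle arises.
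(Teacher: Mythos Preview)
Your proposal is correct and takes essentially the same approach as the paper: the paper's proof is the single sentence ``The proof is analogous to the one in Proposition~\ref{propKernelAlt},'' and you have faithfully carried out that analogy, swapping the roles of even and odd blocks, replacing $\detper$-superminors by $\perdet$-superminors, and invoking Proposition~\ref{symBourbaki}-2) in place of Proposition~\ref{altBourbaki}-2). Your case split and computations mirror those of Proposition~\ref{propKernelAlt} exactly as intended.
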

\begin{proof}
The proof is analogous to the one in Proposition \ref{propKernelAlt}
\end{proof}

\begin{notation} \label{notationKernelModuleSym}
We claim that if $(\varphi, h) \in\Aut_R(L_R,M_R)$, then we have $(\varphi, h^{\vee n}) \in\Aut_R(L_R,\bigvee^n M_R)$. Indeed, by Notation~\ref{notationKernelSym}, we know that $h^{\vee n} \in\GL^{\bar0}_R(\bigvee^n M_R)$. Besides, for homogeneous $x\in L_R$, $v_k \in M_R$ 
it is easy to see that
$$
h^{\vee n}\big(x \cdot (\vee_i v_i)\big)
= \varphi(x) \cdot \big( h^{\vee n}(\vee_i v_i) \big),
$$
and there exists $(\varphi, h^{\vee n})^{-1} = \big(\varphi^{-1}, (h^{-1})^{\vee n}\big) \in \Aut_R(L_R) \times \GL^{\bar0}_R(\bigvee^n M_R)$, which proves the claim. Consequently, there is a morphism of group schemes $\widecheck{\Phi}_n \colon \bAut(L,M) \to \bAut(L,\bigvee^n M)$ determined by
\begin{equation} \label{inclusionSchemesSymmetric} \begin{split}
(\widecheck{\Phi}_n)_R \colon \bAut(L,M)(R) := \Aut_R(L_R,M_R)
& \longrightarrow \bAut(L,\bigvee^n M)(R) := \Aut_R(L_R,\bigvee^n M_R), \\
(\varphi, h) & \longmapsto (\varphi, h^{\vee n}).
\end{split} \end{equation}
Moreover,
$$\bmu_n(R) \cong \{ (\id_L, r \id_M) \med r\in R, \; r^n = 1 \} \leq \Aut_R(L_R,M_R),$$
thus $\bmu_n \lesssim \bker (\widecheck{\Phi}_n)$.
\end{notation}

\smallskip

\begin{proposition} \label{propKernelModuleSym}
Let $\widecheck{\Phi}_n$ be defined as above. Then
$\bker\widecheck{\Phi}_n = \bAut(L,M) \cap (\bTGS \times \bker\widecheck{\Psi}_n)$. In particular:
\begin{itemize}
\item[$1)$] If $M$ is odd and has dimension $n$, then
$\bker\widecheck{\Phi}_n = \bAut(L,M) \cap (\bTGS \times \bSL_n)$.
\item[$2)$] Otherwise $\bker\widecheck{\Phi}_n = \bTGS \times \bmu_n \simeq \bmu_n$,
and therefore $\bAut(L,M)/\bmu_n \lesssim \bAut(L,\bigvee^n M)$.
\end{itemize}
\end{proposition}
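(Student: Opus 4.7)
The plan is to proceed in exact parallel with the proof of Proposition~\ref{propKernelModuleAlt}, simply replacing the alternating gadgets by their symmetric counterparts. Since Notation~\ref{notationKernelModuleSym} already produces the morphism $\widecheck{\Phi}_n$ and establishes $\bmu_n \lesssim \bker\widecheck{\Phi}_n$, and since Proposition~\ref{propKernelSym} already describes $\bker\widecheck{\Psi}_n$ in both possible situations, nothing beyond a short bookkeeping argument should be required.

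First I would unpack the general equality $\bker\widecheck{\Phi}_n = \bAut(L,M) \cap (\bTGS \times \bker\widecheck{\Psi}_n)$. Given any associative, commutative, unital $\FF$-algebra $R$ and any $(\varphi,h) \in \Aut_R(L_R,M_R)$, the condition $(\widecheck{\Phi}_n)_R(\varphi,h) = (\id_{L_R},\id_{\bigvee^n M_R})$ forces $\varphi = \id_{L_R}$ (so $\varphi \in \bTGS(R)$) and $h^{\vee n} = \id_{\bigvee^n M_R}$ (so $h \in \ker(\widecheck{\Psi}_n)_R$). Conversely, any such pair lies in $\ker(\widecheck{\Phi}_n)_R$ provided it still satisfies the compatibility $h(x\cdot v) = \varphi(x)\cdot h(v)$, i.e.\ it belongs to $\Aut_R(L_R,M_R)$. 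This yields the claimed description of $\bker\widecheck{\Phi}_n$ as a functorial intersection.

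For case~$1)$, when $M$ is odd of dimension $n$, Proposition~\ref{propKernelSym}-$1)$ identifies $\bker\widecheck{\Psi}_n \simeq \bSL_n$, and substituting this into the general formula gives the stated equality. For case~$2)$, Proposition~\ref{propKernelSym}-$2)$ gives $\bker\widecheck{\Psi}_n \simeq \bmu_n$, whose $R$-points consist of scalar maps $r\id_M$ with $r^n = 1$; since any such scalar map trivially satisfies $(r\id_M)(x \cdot v) = x \cdot (r\id_M)(v)$, the pair $(\id_L, r\id_M)$ always belongs to $\Aut_R(L_R,M_R)$, so the intersection collapses to $\bTGS \times \bmu_n \simeq \bmu_n$. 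The embedding $\bAut(L,M)/\bmu_n \lesssim \bAut(L,\bigvee^n M)$ is then a direct application of the first isomorphism theorem for affine group schemes.

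There is no genuine obstacle: the nondegeneracy and functoriality work already done in Proposition~\ref{propKernelSym} and Notation~\ref{notationKernelModuleSym} does all the heavy lifting. The one small point to be careful about is that in case~$1)$ the intersection cannot in general be simplified further, because whether a given element of $\bSL_n$ pairs with $\id_L$ to form an automorphism of $(L,M)$ depends on the specific $L$-module structure; this is why the statement is kept in the intersection form, exactly as in Proposition~\ref{propKernelModuleAlt}-$1)$.
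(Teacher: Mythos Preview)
Your proposal is correct and follows exactly the same approach as the paper, which simply writes ``This follows from Proposition~\ref{propKernelSym} and Notation~\ref{notationKernelModuleSym}.'' You have merely spelled out the bookkeeping that the paper leaves implicit.
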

\begin{proof}
This follows from Proposition~\ref{propKernelSym} and Notation~\ref{notationKernelModuleSym}.
\end{proof}

\section{Alternating superpowers of metric generalized Jordan superpairs} \label{section.superpairs.alternating} \hfill 

\begin{df}
Let $(L,M,b) \in \MFLSM$ and $(\cV,\langle\cdot,\cdot\rangle) \in \MGJSP$ be nonzero corresponding objects through the Faulkner correspondence. The Lie supermodule $(L,\bigwedge^n M, b)$ is not neccesarily faithful, but by the Faulkner construction, it defines an object $(\bigwedge^n \cV, \langle\cdot,\cdot\rangle) \in \MGJSP$ that will be called the \emph{$n$-th alternating} (or \emph{exterior}) \emph{superpower} of $(\cV, \langle\cdot,\cdot\rangle)$ in the class $\MGJSP$. The object in $\MFLSM$ that corresponds to $(\bigwedge^n \cV, \langle\cdot,\cdot\rangle)$ will be called the \emph{$n$-th alternating superpower} of $(L,M,b)$ in the class $\MFLSM$, which is given by  $(\widetilde{L}, \bigwedge^n M, \widetilde{b})$, where $\widetilde{L}$ is a quotient of $\instr(L,\bigwedge^n M)$, and $\widetilde{b}$ is determined by $b$ (this follows from \cite[Prop.3.3]{A22}).
\end{df}

\begin{remark}
Throughout this section, and without further mention unless otherwise stated, we will only consider objects $(\cV, \langle\cdot,\cdot\rangle)\in\MGJSP$ and $n > 1$ such that both vector superspaces $\cV^+$ and $\cV^-$ satisfy the conditions from Remark~\ref{remark.restrictions.alternating}. This will avoid considering the case where $\bigwedge^n\cV = 0$, and the restrictions of $\chr(\FF)$ are necessary for nondegeneracy of the bilinear form (which is used in the Faulkner construction).
\end{remark}

\bigskip

\begin{notation}
Again, we need more auxiliary notation. For $1\leq i,j,p,q \leq n$, define:
\begin{equation}
\varsigmahat_{p,q}(i,j) = \varsigmahat_{p,q,n}(i,j) :=
\begin{cases}
(-1)^{i+j}, \quad & \text{for $i\leq p, j \leq q$}, \\
(-1)^{i+n}, \quad & \text{for $i \leq p$, $j > q$}, \\
(-1)^{j+n}, \quad & \text{for $i > p$, $j \leq q$}, \\
1, \quad & \text{for $i > p, j > q$}.
\end{cases}
\end{equation}
\end{notation}

\smallskip

\begin{proposition} \label{alternatigPowerPairs}
Let $\cV$ be a nonzero object in $\MGJSP$, $1 < n\in\NN$, and $\cW = \bigwedge^n\cV$. Then:
\begin{itemize}
\item[1)] The bilinear form $\langle\cdot,\cdot\rangle$ on $\cW$ is given by the $n$-th alternating superpower of the bilinear form of $\cV$. That is, it is determined, for parity-ordered elements $\wedge_i f_i \in \bigwedge^{(p,n-p)}\cV^-$, $\wedge_j v_j \in \bigwedge^{(q,n-q)}\cV^+$, by
$$ \langle \wedge_i f_i, \wedge_j v_j \rangle = \omega_{n-p} \detper_{p,n-p}\Bigl( \big(\langle f_i,v_j \rangle\big)_{ij}\Bigr), $$
which is zero if $p \neq q$.
\item[2)] Fix $\sigma \in\{+, - \}$. For parity-ordered elements $\wedge_i f_i \in \bigwedge^{(p,n-p)}\cV^{-\sigma}$ and $\wedge_j v_j \in \bigwedge^{(q,n-q)}\cV^\sigma$, the spanning elements of $\instr(\cW)$ are of the form
\begin{equation} \label{inderAlt}
\nu( \wedge_i f_i, \wedge_j v_j )
= \omega_{n-p} \sum_{i,j = 1}^n \varsigmahat_{p,q}(i,j) \widehat{\cM}_{ij}(B) \nu(f_i, v_j),
\end{equation}
where $\widehat{\cM}_{ij}(B)$ is the alternating $(i,j)$-superminor of the even supermatrix
$$B = \diag(B_{\bar0},B_{\bar1}) := (\langle f_i, v_j \rangle)_{ij}
\in \cM_{(p|n-p) \times (q|n-q)}(\FF).$$
\item[3)] Fix parity-ordered elements $\wedge_i f_i \in \bigwedge^{(p,n-p)}\cV^{-\sigma}$, $\wedge_j v_j \in \bigwedge^{(q,n-q)}\cV^\sigma$, $\wedge_k g_k \in \bigwedge^{(r,n-r)}\cV^{-\sigma}$, for some $\sigma = \pm$. Then the triple products of $\cW$ are given by
\begin{equation}
\{ \wedge_i f_i, \wedge_j v_j, \wedge_k g_k \}
= \omega_{n-p} \sum_{i,j,k = 1}^n \varsigmahat_{p,q}(i,j) \widehat{\cM}_{ij}(B)
\Bigl( \prod_{t<k} \eta_{g_t, D(f_i,v_j)} \Bigr)
g_1 \wedge \dots \wedge \{ f_i, v_j, g_k \} \wedge \dots \wedge g_n,
\end{equation}
where $B := (\langle f_i, v_j \rangle)_{ij} \in \cM_{(p|n-p) \times (q|n-q)}(\FF)$.
\item[4)] There is a morphism of affine group schemes
$ \widehat{\Omega}_n \colon \bAut(\cV, \langle\cdot,\cdot\rangle)
\to \bAut(\cW, \langle\cdot,\cdot\rangle) $
given by
\begin{equation} \begin{split}
(\widehat{\Omega}_n)_R \colon \Aut_R(\cV_R, \langle\cdot,\cdot\rangle)
& \longrightarrow \Aut_R(\cW_R, \langle\cdot,\cdot\rangle), \\
\varphi = (\varphi^-, \varphi^+) & \longmapsto \varphi^{\wedge n}
:= \big( (\varphi^-)^{\wedge n}, (\varphi^+)^{\wedge n}) \big).
\end{split}\end{equation}
Furthermore: \\
\noindent $i)$ If $\cV$ is even and $\dim\cV = n$, then
$\bker \widehat{\Omega}_n = \bSL_n \cap \bAut(\cV, \langle\cdot,\cdot\rangle)$
and 
\begin{equation}
\bAut(\cV, \langle\cdot,\cdot\rangle)/\bker \widehat{\Omega}_n
\lesssim \bAut(\cW, \langle\cdot,\cdot\rangle) \simeq \bG_m .
\end{equation}
\noindent $ii)$ Otherwise, $\bker \widehat{\Omega}_n = \bmu_n$ and
\begin{equation}
\bAut(\cV, \langle\cdot,\cdot\rangle)/\bmu_n \lesssim \bAut(\cW, \langle\cdot,\cdot\rangle).
\end{equation}
\end{itemize}
\end{proposition}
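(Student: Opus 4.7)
The plan is to transfer all four properties from the corresponding statements for Lie supermodules (Propositions~\ref{duality.alternating.supermodules} and~\ref{propKernelModuleAlt}) through the Faulkner construction, mimicking the structure of the proofs of Propositions~\ref{restrictedTensorPowerPairs} and~\ref{generalTensorProductPairs}.

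Let $(L,M,b)\in\MFLSM$ correspond to $(\cV,\langle\cdot,\cdot\rangle)$ via the Faulkner correspondence, so $\cV^-=M^*$ and $\cV^+=M$. By definition of $\bigwedge^n\cV$, the Faulkner construction is applied to $(L,\bigwedge^n M, b)$, giving $\cW^+=\bigwedge^n M$ and $\cW^-=(\bigwedge^n M)^*$. Proposition~\ref{duality.alternating.supermodules} identifies $(\bigwedge^n M)^*$ with $\bigwedge^n M^*=\bigwedge^n\cV^-$ as $L$-supermodules through the pairing $\perp_k \omega_{n-k}\widehat{F}^{(k,n-k)}$, and Part~1) is then an immediate restatement of \eqref{dualPairingAlternating} under this identification, since on parity-ordered elements of $\bigwedge^{(p,n-p)}$ the pairing $\widehat{F}^{(p,n-p)}$ is exactly $\detper_{p,n-p}\!\bigl((\langle f_i,v_j\rangle)_{ij}\bigr)$.

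For Part~2), I would follow the pattern of the proof of Proposition~\ref{restrictedTensorPowerPairs}-2): use the nondegeneracy of $b$ together with the defining relation
$$b\bigl(x,[\wedge_i f_i,\wedge_j v_j]\bigr) = \langle x\cdot(\wedge_i f_i),\wedge_j v_j\rangle$$
valid for every $x\in L$. Expanding the right-hand side via the action formula \eqref{eq.action.alternating} and by Part~1), I perform a Laplace cofactor expansion of the determinant-permanent $\detper_{p,n-p}$ appearing in the pairing: the determinant block contributes cofactor signs $(-1)^{i+j}$ and the permanent block contributes $+1$. When $x$ is odd (so necessarily $|p-q|=1$) the elements $x\cdot f_i$ flip parity, and one must reorder the wedge product to restore parity order; this produces the additional factors $(-1)^{i+n}$, $(-1)^{j+n}$ in the mixed cases of $\varsigmahat_{p,q}(i,j)$. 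Combining with the identity $\langle x\cdot f_i, v_j\rangle=b\bigl(x,[f_i,v_j]\bigr)$ and cancelling $b(x,-)$ by nondegeneracy yields
$$[\wedge_i f_i,\wedge_j v_j] = \omega_{n-p}\sum_{i,j=1}^{n}\varsigmahat_{p,q}(i,j)\,\widehat{\cM}_{ij}(B)\,[f_i,v_j]\in L,$$
and applying the epimorphism $\Upsilon$ in~\eqref{epimorphismFaulkner} converts each $[f_i,v_j]$ into $\nu(f_i,v_j)$, giving \eqref{inderAlt}. Part~3) is then immediate: by \eqref{action.tripleproducts.correspondence} the triple product equals $\nu(\wedge_i f_i,\wedge_j v_j)\cdot(\wedge_k g_k)$, and substituting \eqref{inderAlt} and expanding the action of each $\nu(f_i,v_j)$ on $\wedge_k g_k$ via \eqref{eq.action.alternating} produces the stated formula, with the parity factor $\prod_{t<k}\eta_{g_t,D(f_i,v_j)}$ arising from $\varepsilon(\nu(f_i,v_j))=\varepsilon(D_{f_i,v_j})$.

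For Part~4), I verify that for $\varphi=(\varphi^-,\varphi^+)\in\Aut_R(\cV_R,\langle\cdot,\cdot\rangle)$ the pair $\varphi^{\wedge n}$ preserves $\langle\cdot,\cdot\rangle_\cW$ (using the formula of Part~1) together with the invariance $\langle\varphi^-(f_i),\varphi^+(v_j)\rangle=\langle f_i,v_j\rangle$) and preserves the triple products (using Part~3) together with $\varphi^\sigma(\{x,y,z\})=\{\varphi^\sigma(x),\varphi^{-\sigma}(y),\varphi^\sigma(z)\}$). Under the Faulkner correspondence $\bAut(\cV,\langle\cdot,\cdot\rangle)\simeq\bAut(L,M,b)$, the morphism $\widehat{\Omega}_n$ corresponds to the restriction of $\widehat{\Phi}_n$ from~\eqref{inclusionSchemesAlternating}, so Proposition~\ref{propKernelModuleAlt} gives $\bker\widehat{\Omega}_n=\bAut(\cV,\langle\cdot,\cdot\rangle)\cap(\bTGS\times\bker\widehat{\Psi}_n)$; case~$ii)$ follows from Proposition~\ref{propKernelAlt}-2). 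For case~$i)$, when $\cV$ is even of dimension $n$ the object $\cW=\bigwedge^n\cV$ is one-dimensional and even, so $\bAut(\cW,\langle\cdot,\cdot\rangle)\simeq\bG_m$; the surjectivity of $\widehat{\Omega}_n$ onto $\bG_m$ is witnessed by its restriction to the scalar subgroup $\bG_m\leq\bAut(\cV,\langle\cdot,\cdot\rangle)$, since $c_r\mapsto c_{r^n}$ is the $n$-th power map on $\bG_m$, which is schematically surjective. The main obstacle is the sign-bookkeeping in Part~2), particularly reconciling the cofactor signs of the determinant/permanent Laplace expansions with the parity-reorderings that occur when $x\in L$ is odd; once this is handled, Parts~3) and~4) reduce to the formal machinery of the Faulkner construction and the supermodule-level results of~\S\ref{section.supermodules.alternating}.
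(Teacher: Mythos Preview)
Your proposal is correct and follows essentially the same approach as the paper: Part~1) via the Faulkner construction and Proposition~\ref{duality.alternating.supermodules}; Part~2) via nondegeneracy of $b$, the identity $b(x,[\wedge_i f_i,\wedge_j v_j])=\langle x\cdot(\wedge_i f_i),\wedge_j v_j\rangle$, a case split on the parity of $x$ (with the cofactor expansions you describe, which the paper implements by quoting the intermediate computations inside the proof of Proposition~\ref{duality.alternating.supermodules}), and then applying $\Upsilon$; Part~3) by substituting~\eqref{inderAlt} into the action~\eqref{eq.action.alternating}; Part~4) by checking invariance of the bilinear form and triple products directly from Parts~1) and~3). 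The only minor deviation is in 4)\,$i)$: the paper computes the quotient $\bAut(\cV,\langle\cdot,\cdot\rangle)/\bker\widehat{\Omega}_n$ via the second isomorphism theorem, using $\bSL_n\cdot\bAut(\cV,\langle\cdot,\cdot\rangle)=\bSL_n\cdot\bG_m\cdot\bAut(\cV,\langle\cdot,\cdot\rangle)=\bGL_n$ and $\bGL_n/\bSL_n\simeq\bG_m$, whereas you obtain surjectivity from the $n$-th power map on the scalar subgroup $\bG_m$; both arguments are valid and yield the same conclusion.
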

\begin{proof}
1) The property follows from the Faulkner construction.

\smallskip

2) We will prove the property by nondegeneracy of $b$.
Fix a homogeneous element $x\in \instr(\cW)$ and parity-ordered elements $\wedge_i f_i \in \bigwedge^{(p,n-p)}\cV^{-\sigma}$, $\wedge_j v_j \in \bigwedge^{(q,n-q)}\cV^\sigma$. There are three nontrivial cases to check.

\noindent $\bullet$ First, consider the case where $x$ is even and $p = q$. Then:
\begin{align*}
b\big(&x, [\wedge_i f_i, \wedge_j v_j]\big)
=_\eqref{bilinearFormsCorrespondence} \langle x \cdot (\wedge_i f_i), \wedge_j v_j \rangle
= \sum_i \langle f_1 \wedge \dots \wedge (x \cdot f_i) \wedge \dots \wedge f_n, \wedge_j v_j \rangle \\
&= \omega_{n-p} \sum_{i\leq p} \det \begin{pmatrix}
\langle f_1, v_1 \rangle & \cdots & \langle f_1, v_p \rangle \\
\vdots & \cdots & \vdots \\
\langle x \cdot f_i, v_1 \rangle & \cdots & \langle x\cdot f_i, v_p \rangle \\
\vdots & \cdots & \vdots \\
\langle f_p, v_1 \rangle & \cdots & \langle f_p, v_p \rangle
\end{pmatrix}
\per\Bigl( (\langle f_r,v_s \rangle)_{r,s > p} \Bigr) \\
& \quad + \omega_{n-p} \sum_{i > p}
\det\Bigl( (\langle f_r,v_s \rangle)_{r,s \leq p} \Bigr)
\per \begin{pmatrix}
\langle f_{p+1}, v_{p+1} \rangle & \cdots & \langle f_{p+1}, v_n \rangle \\
\vdots & \cdots & \vdots \\
\langle x \cdot f_i, v_{p+1} \rangle & \cdots & \langle x\cdot f_i, v_n \rangle \\
\vdots & \cdots & \vdots \\
\langle f_n, v_{p+1} \rangle & \cdots & \langle f_n, v_n \rangle
\end{pmatrix} \\
&= \omega_{n-p} \sum_{i,j \leq p} (-1)^{i+j} \langle x \cdot f_i, v_j \rangle
\det\Biggl( (\langle f_r,v_s \rangle)_{\substack{i \neq r \leq p \\ j \neq s \leq p}} \Biggr)
\per\Bigl( (\langle f_r,v_s \rangle)_{r,s > p} \Bigr) \\
& \quad + \omega_{n-p} \sum_{i,j > p}
\det\Bigl( (\langle f_r,v_s \rangle)_{r,s \leq p} \Bigr)
\langle x \cdot f_i, v_j \rangle
\per\Biggl( (\langle f_r,v_s \rangle)_{\substack{i \neq r > p \\ j \neq s > p}} \Biggr)
=_\eqref{bilinearFormsCorrespondence} \\
&= \omega_{n-p} \sum_{i,j \leq p} (-1)^{i+j} \widehat{\cM}_{ij}(B) b\big(x, [f_i, v_j]\big)
+ \omega_{n-p} \sum_{i,j > p} \widehat{\cM}_{ij}(B) b\big(x, [f_i, v_j]\big) \\
&= b\Big(x, \omega_{n-p} \sum_{i,j} \varsigmahat_{p,q}(i,j)
\widehat{\cM}_{ij}(B) [f_i, v_j] \Big).
\end{align*}

\noindent $\bullet$ Second, consider the case where $x$ is odd and $q = p+1$. Then:
\begin{align*}
b\big(&x, [\wedge_i f_i, \wedge_j v_j]\big)
=_\eqref{bilinearFormsCorrespondence} \langle x \cdot (\wedge_i f_i), \wedge_j v_j \rangle
= \sum_i \Bigl( \prod_{t < i} \eta_{x, f_t} \Bigr)
\langle f_1 \wedge \dots \wedge (x \cdot f_i) \wedge \dots \wedge f_n, \wedge_j v_j \rangle \\
&= \sum_{i > p} (-1)^{i+p+1}
\langle f_1 \wedge\dots\wedge (x \cdot f_i) \wedge\dots\wedge f_n, \wedge_j v_j \rangle \\
&= \sum_{i > p}
\langle f_1 \wedge\dots\wedge f_p \wedge (x \cdot f_i) \wedge  f_{p+1}
\wedge\dots\wedge f_{i-1} \wedge f_{i+1} \wedge\dots\wedge f_n, \wedge_j v_j \rangle \\
&= \omega_{n-p-1} \sum_{i > p} \det 
\begin{pmatrix}  
\langle f_1, v_1 \rangle & \cdots & \langle f_1, v_{p+1} \rangle \\
\vdots & \cdots & \vdots \\
\langle f_p, v_1 \rangle & \cdots & \langle f_p, v_{p+1} \rangle \\
\langle x\cdot f_i, v_1 \rangle & \cdots & \langle x\cdot f_i, v_{p+1} \rangle
\end{pmatrix}
\per\Biggl( (\langle f_r,v_s \rangle)_{\substack{i \neq r > p \\ s > p+1}} \Biggr) 
=_\eqref{omegaProp} \\
&= \omega_{n-p}(-1)^{n-p-1} \sum_{\substack{i > p \\ j \leq p+1}} (-1)^{j+p+1}
\det\Biggl( (\langle f_r,v_s \rangle)_{\substack{r \leq p \\ j \neq s \leq p+1}} \Biggr)
\langle x \cdot f_i,v_j \rangle
\per\Biggl( (\langle f_r,v_s \rangle)_{\substack{i \neq r > p \\ s > p+1}} \Biggr) \\
&= \omega_{n-p} \sum_{\substack{i > p \\ j \leq p+1}} (-1)^{n+j}
\det\Biggl( (\langle f_r,v_s \rangle)_{\substack{r \leq p \\ j \neq s \leq p+1}} \Biggr)
\langle x \cdot f_i,v_j \rangle
\per\Biggl( (\langle f_r,v_s \rangle)_{\substack{i \neq r > p \\ s > p+1}} \Biggr)
=_\eqref{bilinearFormsCorrespondence} \\
&= \omega_{n-p} \sum_{\substack{i > p \\ j \leq p+1}} (-1)^{n+j}
\widehat{\cM}_{ij}(B)  b\big(x, [f_i, v_j]\big)
= b\Big(x, \omega_{n-p} \sum_{i,j} \varsigmahat_{p,q}(i,j)
\widehat{\cM}_{ij}(B) [f_i, v_j] \Big).
\end{align*}

\noindent $\bullet$ Consider the third case, where $x$ is odd and $q = p-1$. Then:
\begin{align*}
b\big(&x, [\wedge_i f_i, \wedge_j v_j]\big)
=_\eqref{bilinearFormsCorrespondence} \langle x \cdot (\wedge_i f_i), \wedge_j v_j \rangle
=  \sum_i \Bigl( \prod_{t < i} \eta_{x, f_t} \Bigr)
\langle f_1 \wedge\dots\wedge (x \cdot f_i) \wedge\dots\wedge f_n, \wedge_j v_j\rangle \\
&= \sum_{i \leq p}
\langle f_1 \wedge\dots\wedge (x \cdot f_i) \wedge\dots\wedge f_n, \wedge_j v_j\rangle \\
&= \sum_{i \leq p} (-1)^{i+p}
\langle f_1 \wedge\dots\wedge f_{i-1} \wedge f_{i+1} \wedge\dots\wedge f_p
\wedge (x \cdot f_i) \wedge f_{p+1} \wedge\dots\wedge f_n, \wedge_j v_j\rangle \\
&= \omega_{n-p+1} \sum_{i \leq p} (-1)^{i+p}
\det\Biggl( (\langle f_r,v_s \rangle)_{\substack{i\neq r \leq p \\ s < p}} \Biggr)
\per \begin{pmatrix}
\langle x\cdot f_i, v_p \rangle & \cdots & \langle x\cdot f_i, v_n \rangle \\
\langle f_{p+1}, v_p \rangle & \cdots & \langle f_{p+1}, v_n \rangle \\
\vdots & \vdots & \vdots \\
\langle f_n, v_p \rangle & \cdots & \langle f_n, v_n \rangle
\end{pmatrix} 
=_\eqref{omegaProp} \\
&= \omega_{n-p} (-1)^{n+p} \sum_{\substack{i \leq p \\ j \geq p}} (-1)^{i+p}
\langle x\cdot f_i, v_j \rangle
\det\Biggl( (\langle f_r,v_s \rangle)_{\substack{i\neq r \leq p \\ s < p}} \Biggr)
\per\Biggl( (\langle f_r,v_s \rangle)_{\substack{r > p \\ j \neq s \geq p}} \Biggr) \\
&= \omega_{n-p} \sum_{\substack{i \leq p \\ j \geq p}} (-1)^{n+i}
\langle x\cdot f_i, v_j \rangle
\det\Biggl( (\langle f_r,v_s \rangle)_{\substack{i\neq r \leq p \\ s < p}} \Biggr)
\per\Biggl( (\langle f_r,v_s \rangle)_{\substack{r > p \\ j \neq s \geq p}} \Biggr)
=_\eqref{bilinearFormsCorrespondence} \\
&= \omega_{n-p} \sum_{\substack{i \leq p \\ j \geq p}} (-1)^{n+i}
\widehat{\cM}_{ij}(B)  b\big(x, [f_i, v_j]\big)
= b\Big(x, \omega_{n-p} \sum_{i,j} \varsigmahat_{p,q}(i,j)
\widehat{\cM}_{ij}(B) [f_i, v_j] \Big).
\end{align*}
Finally, we conclude that the property follows by nondegeneracy of $b$, and then applying the epimorphism $\Upsilon$ in \eqref{epimorphismFaulkner}.

\smallskip

3) The property follows since:
\begin{align*}
\{ \wedge_i f_i, \wedge_j & v_j, \wedge_k g_k \} 
= \nu(\wedge_i f_i, \wedge_j v_j) \cdot (\wedge_k g_k)
=_\eqref{inderAlt} \\
&= \omega_{n-p} \sum_{i,j} \varsigmahat_{p,q}(i,j) \widehat{\cM}_{ij}(B) \nu(f_i, v_j) \cdot (\wedge_k g_k)
=_\eqref{eq.action.alternating} \\
&= \omega_{n-p} \sum_{i,j,k} \varsigmahat_{p,q}(i,j) \widehat{\cM}_{ij}(B)
\Bigl( \prod_{t<k} \eta_{g_t, D(f_i,v_j)} \Bigr)
g_1 \wedge\dots\wedge \big(\nu(f_i, v_j) \cdot g_k\big) \wedge\dots\wedge g_n \\
&= \omega_{n-p} \sum_{i,j,k} \varsigmahat_{p,q}(i,j) \widehat{\cM}_{ij}(B)
\Bigl( \prod_{t<k} \eta_{g_t, D(f_i,v_j)} \Bigr)
g_1 \wedge \dots \wedge \{ f_i, v_j, g_k \} \wedge \dots \wedge g_n.
\end{align*}

\smallskip

4) Fix parity-ordered elements $\wedge_i f_i \in\bigwedge^{(p,n-p)}\cV^-_R$, $\wedge_j v_j \in\bigwedge^{(p,n-p)}\cV^+_R$, and $\varphi\in\Aut_R(\cV_R,\langle\cdot,\cdot\rangle)$. Then
\begin{equation*} \begin{split}
&\langle (\varphi^-)^{\wedge n}(\wedge_i f_i), (\varphi^+)^{\wedge n}(\wedge_j v_j) \rangle
= \langle \wedge_i \varphi^-(f_i), \wedge_j \varphi^+(v_j) \rangle \\
& \qquad = \omega_{n-p} \detper_{p,n-p} \Big( \big(\langle \varphi^-(f_i), \varphi^+(v_j) \rangle\big)_{ij} \Big)
= \omega_{n-p} \detper_{p,n-p} \Big( \big(\langle f_i, v_j \rangle\big)_{ij} \Big) \\
& \qquad = \langle \wedge_i f_i, \wedge_j v_j \rangle,
\end{split} \end{equation*}
thus $\langle\cdot,\cdot\rangle$ is $\bAut(\cV,\langle\cdot,\cdot\rangle)$-invariant, and consequently so it is the matrix $B$ (and its minors) associated to the elements $\wedge_i f_i$ and $\wedge_j v_j$.
Then, for $\wedge_i f_i$, $\wedge_j v_j$, $\wedge_k g_k$ as above, we have
\begin{align*}
& (\varphi^-)^{\wedge n}\big( \{ \wedge_i f_i, \wedge_j v_j, \wedge_k g_k \} \big) = \\
&= (\varphi^-)^{\wedge n}\Big(
\omega_{n-p} \sum_{i,j,k} \varsigmahat_{p,q}(i,j) \widehat{\cM}_{ij}(B)
\Bigl( \prod_{t<k} \eta_{g_t, D(f_i,v_j)} \Bigr)
g_1 \wedge \dots \wedge \{ f_i, v_j, g_k \} \wedge \dots \wedge g_n
\Big) \\
&= \omega_{n-p} \sum_{i,j,k} \varsigmahat_{p,q}(i,j) \widehat{\cM}_{ij}(B)
\Bigl( \prod_{t<k} \eta_{\varphi^-(g_t), D(\varphi^-(f_i),\varphi^+(v_j))} \Bigr) \cdot \\
& \qquad\qquad\qquad \cdot \varphi^-(g_1) \wedge \dots \wedge
\{ \varphi^-(f_i), \varphi^+(v_j), \varphi^-(g_k) \}
\wedge \dots \wedge \varphi^-(g_n) \\
&= \{ \wedge_i \varphi^-(f_i), \wedge_j \varphi^+(v_j), \wedge_k \varphi^-(g_k) \} \\
&= \{ (\varphi^-)^{\wedge n}(\wedge_i f_i), (\varphi^+)^{\wedge n}(\wedge_j v_j),
(\varphi^-)^{\wedge n}(\wedge_k g_k) \},
\end{align*}
which also holds, analogously, for the other triple product. We have proven that
$\varphi^{\wedge n} \in \bAut(\cW, \langle\cdot,\cdot\rangle)$.

$i)$ Since $\cW$ is $1$-dimensional, we have $\bAut(\cW,\langle\cdot,\cdot\rangle) \simeq \bG_m$.
By Proposition~\ref{propKernelAlt}, it is clear that
$\bker \widehat{\Omega}_n = \bSL_n \cap \bAut(\cV, \langle\cdot,\cdot\rangle)$.
It is obvious that
$\bAut(\cV,\langle\cdot,\cdot\rangle) / \bker \widehat{\Omega}_n
\lesssim \bAut(\cW,\langle\cdot,\cdot\rangle)$.

$ii)$ By Proposition~\ref{propKernelAlt} we get $\bker \widehat{\Omega}_n = \bmu_n$, and the result follows.
\end{proof}

\begin{example} \label{example.alt}
Recall from \cite{L75} that the simple Jordan pairs of type I are given by $\cV^{\text{(I)}}_{n,m} := (\cM_{n,m}(\FF), \cM_{n,m}(\FF))$ (here $n,m \in \NN$ are arbitrary with $n < m$ and $\chr\FF \neq 2$), with generic trace
$$ t(x,y) = t^{\text{(I)}}(x,y) :=  \tr(xy^\Tr), $$
and triple products
$$ \{ x,y,z \} := x y^\Tr z + z y^\Tr x. $$
It was shown in \cite[Ex.4.7]{A22} that $(\cV^{\text{(I)}}_{n,m}, t) \in \MGJP$.
Simple Jordan pairs of type II are the Jordan subpairs of $\cV^{\text{(I)}}_{n,n}$ given by $\cV^{\text{(II)}}_n := (A_n(\FF), A_n(\FF))$, where $A_n(\FF)$ is the vector space of $n \times n$ antisymmetric matrices, and their generic trace is given by
$$ t(x,y) = t^{\text{(II)}}(x,y) := \sum_{i < j} x_{ij} y_{ij}. $$

Consider the basis $\{ \widehat{E}_{ij} \med i < j \}$ of $A_n(\FF)$ where $\widehat{E}_{ij} := E_{ij} - E_{ji}$, and note that $\widehat{E}_{ij} = -\widehat{E}_{ji}$. For $i<j$ and $k<l$, it is easy to see that
$$t^{\text{(I)}}(\widehat{E}_{ij},\widehat{E}_{kl}) = 2(\delta_{ik}\delta_{jl} - \delta_{il}\delta_{jk})
= 2 \delta_{ik}\delta_{jl} = 2 t^{\text{(II)}}(\widehat{E}_{ij},\widehat{E}_{kl}),$$
so that $t^{\text{(II)}} = \frac{1}{2} t^{\text{(I)}}$ on $\cV^{\text{(II)}}_n$. Consequently, $t^{\text{(II)}}$ (which is nondegenerate) inherits the good properties from $t^{\text{(I)}}$, so that $(\cV^{\text{(II)}}_n, t) \in \MGJP$.
Then we have that
\begin{align*}
(E_{i_1 i_2} & - E_{i_2 i_1}) (E_{j_1 j_2} - E_{j_2 j_1}) (E_{k_1 k_2} - E_{k_2 k_1}) = \\
&= ( \delta_{i_2 j_1} E_{i_1 j_2} + \delta_{i_1 j_2} E_{i_2 j_1}
- \delta_{i_2 j_2} E_{i_1 j_1} - \delta_{i_1 j_1} E_{i_2 j_2} ) (E_{k_1 k_2} - E_{k_2 k_1}) \\
&= (\delta_{i_2 j_2} \delta_{j_1 k_2} - \delta_{i_2 j_1} \delta_{j_2 k_2}) E_{i_1 k_1}
+ (\delta_{i_2 j_1} \delta_{j_2 k_1} - \delta_{i_2 j_2} \delta_{j_1 k_1}) E_{i_1 k_2} \\
& \quad + (\delta_{i_1 j_1} \delta_{j_2 k_2} - \delta_{i_1 j_2} \delta_{j_1 k_2}) E_{i_2 k_1}
+ (\delta_{i_1 j_2} \delta_{j_1 k_1} - \delta_{i_1 j_1} \delta_{j_2 k_1}) E_{i_2 k_2},
\end{align*}
and swapping the labels $i \leftrightarrow k$ we get
\begin{align*}
(E_{k_1 k_2} & - E_{k_2 k_1}) (E_{j_1 j_2} - E_{j_2 j_1}) (E_{i_1 i_2} - E_{i_2 i_1}) = \\
&= (\delta_{k_2 j_2} \delta_{j_1 i_2} - \delta_{k_2 j_1} \delta_{j_2 i_2}) E_{k_1 i_1}
+ (\delta_{k_2 j_1} \delta_{j_2 i_1} - \delta_{k_2 j_2} \delta_{j_1 i_1}) E_{k_1 i_2} \\
& \quad + (\delta_{k_1 j_1} \delta_{j_2 i_2} - \delta_{k_1 j_2} \delta_{j_1 i_2}) E_{k_2 i_1}
+ (\delta_{k_1 j_2} \delta_{j_1 i_1} - \delta_{k_1 j_1} \delta_{j_2 i_1}) E_{k_2 i_2}.
\end{align*}
Therefore, the triple products of $\cV^{\text{(II)}}_n$ are given by
\begin{align*}
\{ \widehat{E}_{i_1 i_2}, & \widehat{E}_{j_1 j_2}, \widehat{E}_{k_1 k_2} \} = \\
&= (E_{i_1 i_2} - E_{i_2 i_1}) (E_{j_1 j_2} - E_{j_2 j_1}) (E_{k_1 k_2} - E_{k_2 k_1}) \\
& \quad + (E_{k_1 k_2} - E_{k_2 k_1}) (E_{j_1 j_2} - E_{j_2 j_1}) (E_{i_1 i_2} - E_{i_2 i_1}) \\
&= (\delta_{i_2 j_2} \delta_{j_1 k_2} - \delta_{i_2 j_1} \delta_{j_2 k_2}) \widehat{E}_{i_1 k_1} 
+ (\delta_{i_2 j_1} \delta_{j_2 k_1} - \delta_{i_2 j_2} \delta_{j_1 k_1}) \widehat{E}_{i_1 k_2} \\
& \quad + (\delta_{i_1 j_1} \delta_{j_2 k_2} - \delta_{i_1 j_2} \delta_{j_1 k_2}) \widehat{E}_{i_2 k_1}
+ (\delta_{i_1 j_2} \delta_{j_1 k_1} - \delta_{i_1 j_1} \delta_{j_2 k_1}) \widehat{E}_{i_2 k_2},
\end{align*}
and the generic trace by
$$ t(\widehat{E}_{i_1 i_2}, \widehat{E}_{j_1 j_2}) = \delta_{i_1 j_1} \delta_{i_2 j_2}. $$

Now, consider two copies of the canonical basis $\{ e_i \}_{i=1}^n$ of $\cM_{1,n}(\FF)$, regarded as bases of the subspaces of $\cV^{\text{(I)}}_{1,n}$, and note that
$$ t(e_i, e_j) = \delta_{ij}, $$
and
$$ \{ e_i, e_j, e_k \} = \delta_{ij} e_k + \delta_{kj} e_i. $$
Then $\{ e_i \wedge e_j \med 1 \leq i < j \leq n \}$ is a basis for both vector spaces of the pair $\cV = \bigwedge^2 \cV^{\text{(I)}}_{1,n}$.
Assuming $1 \leq i_1 < i_2 \leq n$ and $1 \leq j_1 < j_2 \leq n$, the bilinear form of $\cV$ is given by
\begin{align*}
\langle e_{i_1} \wedge e_{i_2}, e_{j_1} \wedge e_{j_2} \rangle
&= \det\big( (t(e_{i_k}, e_{j_l}))_{kl} \big) = \det\big( (\delta_{i_k j_l})_{kl} \big) \\
&= \delta_{i_1 j_1} \delta_{i_2 j_2} - \delta_{i_2 j_1} \delta_{i_1 j_2} = \delta_{i_1 j_1} \delta_{i_2 j_2}.
\end{align*}
Let $M_{ij}$ denote the determinant $(i,j)$-minor of $B = \big( t(e_{i_k}, e_{j_l}) \big)_{kl} = \big( \delta_{i_k j_l} \big)_{kl}$.
Then $M_{11} = \delta_{i_2 j_2}$, $M_{12} = \delta_{i_2 j_1}$, $M_{21} = \delta_{i_1 j_2}$, $M_{22} = \delta_{i_1 j_1}$, and the triple products of $\cV$ are given by
\begin{align*}
\{ e_{i_1} \wedge & e_{i_2}, e_{j_1} \wedge e_{j_2}, e_{k_1} \wedge e_{k_2} \} = \\
&= M_{11} ( \{ e_{i_1}, e_{j_1}, e_{k_1} \} \wedge e_{k_2} + e_{k_1} \wedge \{ e_{i_1}, e_{j_1}, e_{k_2} \}) \\
& \quad -M_{12} ( \{ e_{i_1}, e_{j_2}, e_{k_1} \} \wedge e_{k_2} + e_{k_1} \wedge \{ e_{i_1}, e_{j_2}, e_{k_2} \}) \\
& \quad -M_{21} ( \{ e_{i_2}, e_{j_1}, e_{k_1} \} \wedge e_{k_2} + e_{k_1} \wedge \{ e_{i_2}, e_{j_1}, e_{k_2} \}) \\
& \quad +M_{22} ( \{ e_{i_2}, e_{j_2}, e_{k_1} \} \wedge e_{k_2} + e_{k_1} \wedge \{ e_{i_2}, e_{j_2}, e_{k_2} \}) \\
&= \delta_{i_2 j_2} \Big( \delta_{i_1 j_1} e_{k_1} \wedge e_{k_2} + \delta_{k_1 j_1} e_{i_1} \wedge e_{k_2}
                        + \delta_{i_1 j_1} e_{k_1} \wedge e_{k_2} + \delta_{k_2 j_1} e_{k_1} \wedge e_{i_1} \Big) \\
&\quad - 
   \delta_{i_2 j_1} \Big( \delta_{i_1 j_2} e_{k_1} \wedge e_{k_2} + \delta_{k_1 j_2} e_{i_1} \wedge e_{k_2}
                        + \delta_{i_1 j_2} e_{k_1} \wedge e_{k_2} + \delta_{k_2 j_2} e_{k_1} \wedge e_{i_1} \Big) \\
&\quad - 
   \delta_{i_1 j_2} \Big( \delta_{i_2 j_1} e_{k_1} \wedge e_{k_2} + \delta_{k_1 j_1} e_{i_2} \wedge e_{k_2}
                        + \delta_{i_2 j_1} e_{k_1} \wedge e_{k_2} + \delta_{k_2 j_1} e_{k_1} \wedge e_{i_2} \Big) \\
&\quad + 
   \delta_{i_1 j_1} \Big( \delta_{i_2 j_2} e_{k_1} \wedge e_{k_2} + \delta_{k_1 j_2} e_{i_2} \wedge e_{k_2}
                        + \delta_{i_2 j_2} e_{k_1} \wedge e_{k_2} + \delta_{k_2 j_2} e_{k_1} \wedge e_{i_2} \Big) \\
&=        \Big( \delta_{i_2 j_1} \delta_{k_2 j_2} - \delta_{i_2 j_2} \delta_{k_2 j_1} \Big) e_{i_1} \wedge e_{k_1}
        + \Big( \delta_{i_2 j_2} \delta_{k_1 j_1} - \delta_{i_2 j_1} \delta_{k_1 j_2} \Big) e_{i_1} \wedge e_{k_2} \\
& \quad + \Big( \delta_{i_1 j_2} \delta_{k_2 j_1} - \delta_{i_1 j_1} \delta_{k_2 j_2} \Big) e_{i_2} \wedge e_{k_1}
        + \Big( \delta_{i_1 j_1} \delta_{k_1 j_2} - \delta_{i_1 j_2} \delta_{k_1 j_1} \Big) e_{i_2} \wedge e_{k_2} \\
& \quad	+4\Big( \delta_{i_1 j_1} \delta_{i_2 j_2} - \delta_{i_2 j_1} \delta_{i_1 j_2} \Big) e_{k_1} \wedge e_{k_2}
\end{align*}
Finally, consider the tensor-shift $\cV^{[-4]}$, which has the same bilinear form as $\cV$, and triple products
\begin{align*}
\{ e_{i_1} \wedge & e_{i_2}, e_{j_1} \wedge e_{j_2}, e_{k_1} \wedge e_{k_2} \} = \\
&=        \Big( \delta_{i_2 j_1} \delta_{k_2 j_2} - \delta_{i_2 j_2} \delta_{k_2 j_1} \Big) e_{i_1} \wedge e_{k_1}
        + \Big( \delta_{i_2 j_2} \delta_{k_1 j_1} - \delta_{i_2 j_1} \delta_{k_1 j_2} \Big) e_{i_1} \wedge e_{k_2} \\
& \quad + \Big( \delta_{i_1 j_2} \delta_{k_2 j_1} - \delta_{i_1 j_1} \delta_{k_2 j_2} \Big) e_{i_2} \wedge e_{k_1}
        + \Big( \delta_{i_1 j_1} \delta_{k_1 j_2} - \delta_{i_1 j_2} \delta_{k_1 j_1} \Big) e_{i_2} \wedge e_{k_2}.
\end{align*}
Assume now that there is some element $\bi \in \FF$ such that $\bi^2 = -1$ (we can extend the scalars if necessary). By comparison of the triple products, it follows that the pair of maps $f = (f^-, f^+)$ defined by
\begin{equation*}
f^\sigma \colon A_n(\FF) \longrightarrow \bigwedge^2 \cM_{1,n}(\FF),
\quad \widehat{E}_{ij} \longmapsto \bi e_i \wedge e_j,
\end{equation*}
gives the following isomorphism of (generalized) Jordan pairs:
\begin{equation}
\cV^{\text{(II)}}_n \cong \Big( \bigwedge^2 \cV^{\text{(I)}}_{1,n} \Big)^{[-4]}
= \Big( \bigwedge^2 \cV^{\text{(I)}}_{1,n} \Big) \otimes \cV_{-4}.
\end{equation}
Unfortunately, $f$ is not an isometry of the bilinear forms. However, $f$ is a similarity with multiplier $-1$, that is, $\langle f(x), f(y) \rangle = - t( x, y)$. In other words, $\cV^{\text{(II)}}_n$ and $\bigwedge^2 \cV^{\text{(I)}}_{1,n}$ are isomorphic up to a tensor-shift and a similarity (simultaneously), namely $(\cV^{\text{(II)}}_n, -t) \cong ( \bigwedge^2 \cV^{\text{(I)}}_{1,n}, \langle\cdot,\cdot\rangle)^{[-4]}$.

Let $\widetilde{f} := c_{-\bi} \circ f$ where $c_\lambda^\sigma(x) := \lambda^{\sigma1} x$ for $\lambda\in\FF^\times$. Then
\begin{equation}
\widetilde{f}^\sigma(\widehat{E}_{ij}) = \sigma e_i \wedge e_j,
\end{equation}
and $\widetilde{f}$ defines another isomorphism $(\cV^{\text{(II)}}_n, -t) \cong ( \bigwedge^2 \cV^{\text{(I)}}_{1,n}, \langle\cdot,\cdot\rangle)^{[-4]}$ which does not require that $\bi \in \FF$.
\end{example}

\section{Symmetric superpowers of metric generalized Jordan superpairs} \label{section.superpairs.symmetric} \hfill 

\begin{df}
Let $(L,M,b) \in \MFLSM$ and $(\cV,\langle\cdot,\cdot\rangle) \in \MGJSP$ be nonzero corresponding objects through the Faulkner correspondence. The Lie supermodule $(L,\bigvee^n M, b)$ is not neccesarily faithful, but by the Faulkner construction, it defines an object $(\bigvee^n \cV, \langle\cdot,\cdot\rangle) \in \MGJSP$ that will be called the \emph{$n$-th symmetric superpower} of $(\cV, \langle\cdot,\cdot\rangle)$ in the class $\MGJSP$. The object in $\MFLSM$ that corresponds to $(\bigvee^n \cV, \langle\cdot,\cdot\rangle)$ will be called the \emph{$n$-th symmetric superpower} of $(L,M,b)$ in the class $\MFLSM$, which is given by $(\widetilde{L}, \bigvee^n M, \widetilde{b})$, where $\widetilde{L}$ is a quotient of $\instr(L,\bigvee^n M)$, and $\widetilde{b}$ is determined by $b$ (this follows from \cite[Prop.3.3]{A22}).
\end{df}

\begin{remark}
Throughout this section, and without further mention unless otherwise stated, we will only consider objects $(\cV, \langle\cdot,\cdot\rangle)\in\MGJSP$ and $n > 1$ such that both vector superspaces $\cV^+$ and $\cV^-$ satisfy the conditions from Remark~\ref{remark.restrictions.symmetric}. This will avoid considering the case where $\bigvee^n\cV = 0$,
and the restrictions of $\chr(\FF)$ are necessary for nondegeneracy of the bilinear form (which is used in the Faulkner construction).
\end{remark}

\bigskip

\begin{notation}
Again, we need more auxiliary notation. For $1\leq i,j,p,q \leq n$, define:
\begin{equation}
\varsigmacheck_{p,q}(i,j) = \varsigmacheck_{p,q,n}(i,j) :=
\begin{cases}
1, \quad & \text{for $i \leq p$, $j \leq q$}, \\
(-1)^{j+n}, \quad & \text{for $i \leq p$, $j > q$}, \\
(-1)^{i+n}, \quad & \text{for $i > p$, $j \leq q$}, \\
(-1)^{i+j}, \quad & \text{for $i > p$, $j > q$}.
\end{cases}
\end{equation}
\end{notation}

\smallskip

\begin{proposition} \label{symmetricPowerPairs}
Let $\cV$ be a nonzero object in $\MGJSP$, $1 < n\in\NN$, and $\cW = \bigvee^n\cV$. Then:
\begin{itemize}
\item[1)] The bilinear form $\langle\cdot,\cdot\rangle$ on $\cW$ is given by the $n$-th symmetric superpower of the bilinear form of $\cV$. That is, it is determined, for parity-ordered elements $\vee_i f_i \in \bigvee^{(p,n-p)}\cV^-$, $\vee_j v_j \in \bigvee^{(q,n-q)}\cV^+$, by
$$ \langle \vee_i f_i, \vee_j v_j \rangle = \omega_{n-p} \perdet_{p,n-p}\Bigl( \big(\langle f_i,v_j \rangle\big)_{ij}\Bigr), $$
which is zero if $p \neq q$.
\item[2)] Fix $\sigma \in\{+, - \}$. For parity-ordered elements $\vee_i f_i \in \bigvee^{(p,n-p)}\cV^{-\sigma}$ and $\vee_j v_j \in \bigvee^{(q,n-q)}\cV^\sigma$, the spanning elements of $\instr(\cW)$ are of the form
\begin{equation} \label{inderSym}
\nu( \vee_i f_i, \vee_j v_j )
= \omega_{n-p} \sum_{i,j = 1}^n \varsigmacheck_{p,q}(i,j) \widecheck{\cM}_{ij}(B) \nu(f_i, v_j),
\end{equation}
where $\widecheck{\cM}_{ij}(B)$ is the symmetric $(i,j)$-superminor of the even supermatrix
$$B = \diag(B_{\bar0},B_{\bar1}) := (\langle f_i, v_j \rangle)_{ij}
\in \cM_{(p|n-p) \times (q|n-q)}(\FF).$$
\item[3)] Fix parity-ordered elements $\vee_i f_i \in \bigvee^{(p,n-p)}\cV^{-\sigma}$, $\vee_j v_j \in \bigvee^{(q,n-q)}\cV^\sigma$, $\vee_k g_k \in \bigvee^{(r,n-r)}\cV^{-\sigma}$, for some $\sigma = \pm$. Then the triple products of $\cW$ are given by
\begin{equation}\label{triple.product.symmetric}
\{ \vee_i f_i, \vee_j v_j, \vee_k g_k \}
= \omega_{n-p} \sum_{i,j,k = 1}^n \varsigmacheck_{p,q}(i,j) \widecheck{\cM}_{ij}(B)
\Bigl( \prod_{t<k} \eta_{g_t, D(f_i,v_j)} \Bigr)
g_1 \vee \dots \vee \{ f_i, v_j, g_k \} \vee \dots \vee g_n,
\end{equation}
where $B := (\langle f_i, v_j \rangle)_{ij} \in \cM_{(p|n-p) \times (q|n-q)}(\FF)$.
\item[4)] There is a morphism of affine group schemes
$ \widecheck{\Omega}_n \colon \bAut(\cV, \langle\cdot,\cdot\rangle)
\to \bAut(\cW, \langle\cdot,\cdot\rangle) $
given by
\begin{equation} \begin{split}
(\widecheck{\Omega}_n)_R \colon \Aut_R(\cV_R, \langle\cdot,\cdot\rangle)
& \longrightarrow \Aut_R(\cW_R, \langle\cdot,\cdot\rangle), \\
\varphi = (\varphi^-, \varphi^+) & \longmapsto \varphi^{\vee n}
:= \big( (\varphi^-)^{\vee n}, (\varphi^+)^{\vee n}) \big).
\end{split}\end{equation}
Furthermore: \\
\noindent $i)$ If $\cV$ is odd and $\dim\cV = n$, then
$\bker \widecheck{\Omega}_n = \bSL_n \cap \bAut(\cV, \langle\cdot,\cdot\rangle)$
and 
\begin{equation}
\bAut(\cV, \langle\cdot,\cdot\rangle)/\bker \widecheck{\Omega}_n
\lesssim \bAut(\cW, \langle\cdot,\cdot\rangle) \simeq \bG_m.
\end{equation}
\noindent $ii)$ Otherwise, $\bker \widecheck{\Omega}_n = \bmu_n$ and
\begin{equation}
\bAut(\cV, \langle\cdot,\cdot\rangle)/\bmu_n \lesssim \bAut(\cW, \langle\cdot,\cdot\rangle).
\end{equation}
\end{itemize}
\end{proposition}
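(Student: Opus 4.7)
The plan is to mirror the proof of Proposition~\ref{alternatigPowerPairs} step by step, replacing alternating constructions by symmetric ones; the sign function $\varsigmacheck_{p,q}(i,j)$ has been defined precisely so that this transfer works. Part 1) is immediate: by the Faulkner construction, the bilinear form on $\bigvee^n\cV = \bigvee^n M$ is the dual pairing from Proposition~\ref{duality.symmetric.supermodules}, which is exactly $\perp_k \omega_k \widecheck{F}^{(n-k,k)}$, and the restriction to $\bigvee^{(p,n-p)}\cV^- \times \bigvee^{(p,n-p)}\cV^+$ is given by $\omega_{n-p}\perdet_{p,n-p}$; cross-terms with $p \neq q$ vanish by the orthogonal decomposition \eqref{decompositionSymmetric}.

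For part 2), I would proceed by nondegeneracy of $b$. Fix a homogeneous $x \in \instr(\cW)$ and parity-ordered elements $\vee_i f_i \in \bigvee^{(p,n-p)}\cV^{-\sigma}$, $\vee_j v_j \in \bigvee^{(q,n-q)}\cV^\sigma$, and apply \eqref{bilinearFormsCorrespondence} to get
\[
b\big(x,[\vee_i f_i, \vee_j v_j]\big) = \langle x\cdot(\vee_i f_i), \vee_j v_j\rangle.
\]
The right-hand side has already been expanded in the three-case calculation inside the proof of Proposition~\ref{duality.symmetric.supermodules} (even $x$ with $p=q$; odd $x$ with $q=p+1$; odd $x$ with $q=p-1$), yielding sums of $\perdet$-type first superminors $\widecheck{\cM}_{ij}(B)$ paired with terms $\langle x\cdot f_i,v_j\rangle = b(x,[f_i,v_j])$. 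Reading off the signs that appear in each case shows they coincide exactly with $\omega_{n-p}\varsigmacheck_{p,q}(i,j)$. Then one factors to obtain $b\big(x, \omega_{n-p}\sum_{i,j}\varsigmacheck_{p,q}(i,j)\widecheck{\cM}_{ij}(B)[f_i,v_j]\big)$, and nondegeneracy of $b$ combined with the epimorphism $\Upsilon$ from \eqref{epimorphismFaulkner} yields \eqref{inderSym}.

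For part 3), I would compute $\{\vee_i f_i, \vee_j v_j, \vee_k g_k\} = \nu(\vee_i f_i, \vee_j v_j)\cdot(\vee_k g_k)$ by substituting \eqref{inderSym} and then applying the action formula \eqref{eq.action.symmetric} to push $\nu(f_i,v_j)$ onto a single tensor factor, producing the sign $\prod_{t<k}\eta_{g_t,D(f_i,v_j)}$ and converting $\nu(f_i,v_j)\cdot g_k$ into $\{f_i,v_j,g_k\}$, giving \eqref{triple.product.symmetric} directly.

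For part 4), I would verify that $\varphi^{\vee n} = \big((\varphi^-)^{\vee n},(\varphi^+)^{\vee n}\big)$ preserves the bilinear form (by part 1), since $\varphi$ preserves $\langle\cdot,\cdot\rangle$ and hence preserves $\perdet_{p,n-p}$ of the matrix of pairings) and the triple products (by part 3), since $\varphi$ preserves $\widecheck{\cM}_{ij}(B)$ and commutes with the triple products of $\cV$). For case $i)$, the superpair $\cW$ is $1$-dimensional, so $\bAut(\cW,\langle\cdot,\cdot\rangle) \simeq \bG_m$; applying Proposition~\ref{propKernelSym}-1) identifies $\bker\widecheck{\Omega}_n = \bSL_n \cap \bAut(\cV,\langle\cdot,\cdot\rangle)$, and since $\bG_m \unlhd \bAut(\cV,\langle\cdot,\cdot\rangle)$ we get
\[
\bAut(\cV,\langle\cdot,\cdot\rangle)/\bker\widecheck{\Omega}_n \simeq \bSL_n\cdot\bAut(\cV,\langle\cdot,\cdot\rangle)/\bSL_n = \bGL_n/\bSL_n \simeq \bG_m,
\]
which forces $\widecheck{\Omega}_n$ to be an isomorphism onto $\bG_m$. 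Case $ii)$ follows directly from Proposition~\ref{propKernelSym}-2). The main obstacle is the sign bookkeeping in part 2), but since the detailed three-case expansion in Proposition~\ref{duality.symmetric.supermodules} was carried out precisely to make $\varsigmacheck_{p,q}$ and $\omega_{n-p}$ emerge naturally, the proof reduces to reading off signs rather than recomputing them.
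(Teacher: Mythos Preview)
Your proposal is correct and follows essentially the same approach as the paper's proof: part~1) via the Faulkner construction and Proposition~\ref{duality.symmetric.supermodules}, part~2) via nondegeneracy of $b$ and the three-case expansion already computed in that proposition (with the epimorphism $\Upsilon$ at the end), part~3) by substituting \eqref{inderSym} into the action formula \eqref{eq.action.symmetric}, and part~4) by direct verification together with Proposition~\ref{propKernelSym} and the second-isomorphism-theorem computation $\bSL_n\cdot\bAut(\cV,\langle\cdot,\cdot\rangle)=\bGL_n$. The only cosmetic difference is that the paper writes out the three sign cases in part~2) explicitly rather than just asserting that they match $\varsigmacheck_{p,q}$, but your reading is accurate.
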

\begin{proof}
1) The property follows from the Faulkner construction.

\smallskip

2) We will prove the property by nondegeneracy of $b$.
Fix a homogeneous element $x\in \instr(\cW)$ and parity-ordered elements $\vee_i f_i \in \bigvee^{(p,n-p)}\cV^{-\sigma}$, $\vee_j v_j \in \bigvee^{(q,n-q)}\cV^\sigma$. There are three nontrivial cases to check.

\noindent $\bullet$ First, consider the case where $x$ is even and $p = q$. Then:
\begin{align*}
b\big(&x, [\vee_i f_i, \vee_j v_j]\big)
=_\eqref{bilinearFormsCorrespondence} \langle x \cdot (\vee_i f_i), \vee_j v_j \rangle 
= \sum_{i} \langle f_1 \vee\dots\vee (x \cdot f_i) \vee\dots\vee f_n, \vee_{j} v_j  \rangle \\
&= \omega_{n-p} \sum_{i\leq p} \per \begin{pmatrix}
\langle f_1, v_1 \rangle & \cdots & \langle f_1, v_p \rangle \\
\vdots & \cdots & \vdots \\
\langle x \cdot f_i, v_1 \rangle & \cdots & \langle x\cdot f_i, v_p \rangle \\
\vdots & \cdots & \vdots \\
\langle f_p, v_1 \rangle & \cdots & \langle f_p, v_p \rangle
\end{pmatrix}
\det\Bigl( (\langle f_r,v_s \rangle)_{r,s > p} \Bigr) \\
& \quad + \omega_{n-p} \sum_{i > p}
\per\Bigl( (\langle f_r,v_s \rangle)_{r,s \leq p} \Bigr)
\det \begin{pmatrix}
\langle f_{p+1}, v_{p+1} \rangle & \cdots & \langle f_{p+1}, v_n \rangle \\
\vdots & \cdots & \vdots \\
\langle x \cdot f_i, v_{p+1} \rangle & \cdots & \langle x\cdot f_i, v_n \rangle \\
\vdots & \cdots & \vdots \\
\langle f_n, v_{p+1} \rangle & \cdots & \langle f_n, v_n \rangle
\end{pmatrix} \\
&= \omega_{n-p} \sum_{\substack{i,j \leq p}} \langle x\cdot f_i,  v_j \rangle
\per\Biggl( (\langle f_r,v_s \rangle)_{\substack{ i \neq r \leq p \\ j \neq s \leq p}} \Biggr)
\det\Bigl( (\langle f_r,v_s \rangle)_{r,s > p} \Bigr) \\
& \quad + \omega_{n-p} \sum_{\substack{i,j > p}} (-1)^{i+j} \langle x \cdot f_{i}, v_j \rangle \per\Bigl( (\langle f_r,v_s \rangle)_{r,s \leq p} \Bigr) \det \Biggl( (\langle f_r,v_s \rangle)_{\substack{i\neq r > p \\ j \neq s > p}} \Biggr) =_\eqref{bilinearFormsCorrespondence} \\
&= \omega_{n-p} \sum_{\substack{i,j \leq p}} \widecheck{\cM}_{ij}(B) b\big(x, [f_i, v_j]\big)
+ \omega_{n-p} \sum_{\substack{i,j > p}} (-1)^{i+j} \widecheck{\cM}_{ij}(B) b\big(x, [f_i, v_j]\big) \\
&= b\Big(x, \omega_{n-p} \sum_{i,j} \varsigmacheck_{p,q}(i,j)
\widecheck{\cM}_{ij}(B) [f_i, v_j] \Big).
\end{align*}

\noindent $\bullet$ Second, consider the case where $x$ is odd and $q = p+1$. Then:
\begin{align*}
b\big(&x, [\vee_i f_i, \vee_j v_j]\big)
=_\eqref{bilinearFormsCorrespondence}
\langle x \cdot (\vee_i f_i), \vee_j v_j \rangle
= \sum_i \Bigl( \prod_{t < i} \eta_{x, f_t} \Bigr)
\langle f_1 \vee\dots\vee (x \cdot f_i) \vee\dots\vee f_n, \vee_j v_j\rangle \\
&= \sum_{i > p} (-1)^{i+p+1}
\langle f_1 \vee\dots\vee (x \cdot f_i) \vee\dots\vee f_n, \vee_j v_j\rangle \\
&= \sum_{i > p}  (-1)^{i+p+1} 
\langle f_1 \vee\dots\vee f_p \vee (x\cdot f_i) \vee f_{p+1} \vee\dots\vee f_{i-1} \vee  f_{i+1} \vee\dots\vee f_n, \vee_j v_j\rangle \\
&= \omega_{n-p-1} \sum_{i > p} (-1)^{i+p+1}
\per \begin{pmatrix}
\langle f_1, v_1 \rangle & \cdots & \langle f_1, v_{p+1} \rangle \\
\vdots & \vdots & \vdots \\
\langle f_p, v_1 \rangle & \cdots & \langle f_p, v_{p+1} \rangle\\
\langle x\cdot f_i, v_1 \rangle & \cdots & \langle x\cdot f_i, v_{p+1} \rangle 
\end{pmatrix} 
\det \Biggl( (\langle f_r,v_s \rangle)_{\substack{i \neq r > p \\ s > p+1}} \Biggr)
=_\eqref{omegaProp} \\
&= \omega_{n-p} (-1)^{n-p-1} \sum_{\substack{i > p \\ j \leq p+1}} (-1)^{i+p+1}
\langle x\cdot f_i, v_j \rangle
\per\Biggl( (\langle f_r,v_s \rangle)_{\substack{r \leq p \\ j\neq s \leq p+1}} \Biggr)
\det\Biggl( (\langle f_r,v_s \rangle)_{\substack{i \neq r > p \\ s > p+1}} \Biggr) \\
&=\omega_{n-p} \sum_{\substack{i > p \\ j \leq p+1}} (-1)^{n+i}
\langle x\cdot f_i, v_j \rangle
\per\Biggl( (\langle f_r,v_s \rangle)_{\substack{r \leq p \\ j\neq s \leq p+1}} \Biggr)
\det\Biggl( (\langle f_r,v_s \rangle)_{\substack{i \neq r > p \\ s > p+1}} \Biggr)
=_\eqref{bilinearFormsCorrespondence}\\
&= \omega_{n-p} \sum_{\substack{i > p \\ j \leq p+1}} (-1)^{n+i}
\widecheck{\cM}_{ij}(B)  b\big(x, [f_i, v_j]\big)
= b\Big(x, \omega_{n-p} \sum_{i,j} \varsigmacheck_{p,q}(i,j)
\widecheck{\cM}_{ij}(B) [f_i, v_j] \Big).
\end{align*}

\noindent $\bullet$ Consider the third case, where $x$ is odd and $q = p-1$. Then:
\begin{align*}
b\big(&x, [\vee_i f_i, \vee_j v_j]\big)
=_\eqref{bilinearFormsCorrespondence}
\langle x \cdot (\vee_i f_i), \vee_j v_j \rangle
= \sum_i \Bigl( \prod_{t < i} \eta_{x, f_t} \Bigr)
\langle f_1 \vee\dots\vee (x \cdot f_i) \vee\dots\vee f_n, \vee_j v_j\rangle \\
&= \sum_{i \leq p}
\langle f_1 \vee\dots\vee (x \cdot f_i) \vee\dots\vee f_n, \vee_j v_j\rangle \\
&= \sum_{i \leq p} 
\langle f_1 \vee\dots\vee f_{i-1} \vee f_{i+1} \vee\dots\vee f_p
\vee (x \cdot f_i) \vee f_{p+1} \vee\dots\vee f_n, \vee_j v_j\rangle \\
&= \omega_{n-p+1} \sum_{i \leq p}
\per\Biggl( (\langle f_r,v_s \rangle)_{\substack{i\neq r \leq p \\ s < p}} \Biggr)
\det \begin{pmatrix}
\langle x\cdot f_i, v_p \rangle & \cdots & \langle x\cdot f_i, v_n \rangle \\
\langle f_{p+1}, v_p \rangle & \cdots & \langle f_{p+1}, v_n \rangle \\
\vdots & \vdots & \vdots \\
\langle f_n, v_p \rangle & \cdots & \langle f_n, v_n \rangle
\end{pmatrix} 
=_\eqref{omegaProp} \\
&= \omega_{n-p} (-1)^{n-p} \sum_{\substack{i \leq p \\ j \geq p}} (-1)^{j-p}
\langle x\cdot f_i, v_j \rangle
\per\Biggl( (\langle f_r,v_s \rangle)_{\substack{i\neq r \leq p \\ s < p}} \Biggr)
\det\Biggl( (\langle f_r,v_s \rangle)_{\substack{r > p \\ j \neq s \geq p}} \Biggr) \\
&= \omega_{n-p} \sum_{\substack{i \leq p \\ j \geq p}} (-1)^{n+j}
\langle x\cdot f_i, v_j \rangle
\per\Biggl( (\langle f_r,v_s \rangle)_{\substack{i\neq r \leq p \\ s < p}} \Biggr)
\det\Biggl( (\langle f_r,v_s \rangle)_{\substack{r > p \\ j \neq s \geq p}} \Biggr)
=_\eqref{bilinearFormsCorrespondence} \\
&= \omega_{n-p} \sum_{\substack{i \leq p \\ j \geq p}} (-1)^{n+j}
\widecheck{\cM}_{ij}(B)  b\big(x, [f_i, v_j]\big)
= b\Big(x, \omega_{n-p} \sum_{i,j} \varsigmacheck_{p,q}(i,j)
\widecheck{\cM}_{ij}(B) [f_i, v_j] \Big).
\end{align*}
Finally, we conclude that the property follows by nondegeneracy of $b$, and then applying the epimorphism $\Upsilon$ in \eqref{epimorphismFaulkner}.

\smallskip

3) The property follows since:
\begin{align*}
\{ \vee_i f_i, \vee_j & v_j, \vee_k g_k \} 
= \nu(\vee_i f_i, \vee_j v_j) \cdot (\vee_k g_k)=_\eqref{inderSym} \\
&= \omega_{n-p} \sum_{i,j} \varsigmacheck_{p,q}(i,j) \widecheck{\cM}_{ij}(B) \nu(f_i, v_j) \cdot (\vee_k g_k)=_\eqref{eq.action.symmetric} \\
&= \omega_{n-p} \sum_{i,j,k} \varsigmacheck_{p,q}(i,j) \widecheck{\cM}_{ij}(B)
\Bigl( \prod_{t<k} \eta_{g_t, D(f_i,v_j)} \Bigr)
g_1 \vee\dots\vee \big(\nu(f_i, v_j) \cdot g_k\big) \vee\dots\vee g_n \\
&= \omega_{n-p} \sum_{i,j,k} \varsigmacheck_{p,q}(i,j) \widecheck{\cM}_{ij}(B)
\Bigl( \prod_{t<k} \eta_{g_t, D(f_i,v_j)} \Bigr)
g_1 \vee \dots \vee \{ f_i, v_j, g_k \} \vee \dots \vee g_n.
\end{align*}

\smallskip

4) Fix parity-ordered elements $\vee_i f_i \in\bigvee^{(p,n-p)}\cV^-_R$, $\vee_j v_j \in\bigvee^{(p,n-p)}\cV^+_R$, and $\varphi\in\Aut_R(\cV_R,\langle\cdot,\cdot\rangle)$. Then
\begin{equation*} \begin{split}
&\langle (\varphi^-)^{\vee n}(\vee_i f_i), (\varphi^+)^{\vee n}(\vee_j v_j) \rangle
= \langle \vee_i \varphi^-(f_i), \vee_j \varphi^+(v_j) \rangle \\
& \qquad = \omega_{n-p} \perdet_{p,n-p} \Big( \big(\langle \varphi^-(f_i), \varphi^+(v_j) \rangle\big)_{ij} \Big)
= \omega_{n-p} \perdet_{p,n-p} \Big( \big(\langle f_i, v_j \rangle\big)_{ij} \Big) \\
& \qquad = \langle \vee_i f_i, \vee_j v_j \rangle,
\end{split} \end{equation*}
thus $\langle\cdot,\cdot\rangle$ is $\bAut(\cV,\langle\cdot,\cdot\rangle)$-invariant, and consequently so it is the matrix $B$ (and its minors) associated to the elements $\vee_i f_i$ and $\vee_j v_j$.
Then, for $\vee_i f_i$, $\vee_j v_j$, $\vee_k g_k$ as above, we have
\begin{align*}
& (\varphi^-)^{\vee n}\big( \{ \vee_i f_i, \vee_j v_j, \vee_k g_k \} \big) =_\eqref{triple.product.symmetric} \\
&= (\varphi^-)^{\vee n}\Big(
\omega_{n-p} \sum_{i,j,k} \varsigmacheck_{p,q}(i,j) \widecheck{\cM}_{ij}(B)
\Bigl( \prod_{t<k} \eta_{g_t, D(f_i,v_j)} \Bigr)
g_1 \vee \dots \vee \{ f_i, v_j, g_k \} \vee \dots \vee g_n
\Big) \\
&= \omega_{n-p} \sum_{i,j,k} \varsigmacheck_{p,q}(i,j) \widecheck{\cM}_{ij}(B)
\Bigl( \prod_{t<k} \eta_{\varphi^-(g_t), D(\varphi^-(f_i),\varphi^+(v_j))} \Bigr) \cdot \\
& \qquad\qquad\qquad \cdot \varphi^-(g_1) \vee \dots \vee
\{ \varphi^-(f_i), \varphi^+(v_j), \varphi^-(g_k) \}
\vee \dots \vee \varphi^-(g_n) \\
&= \{ \vee_i \varphi^-(f_i), \vee_j \varphi^+(v_j), \vee_k \varphi^-(g_k) \} \\
&= \{ (\varphi^-)^{\vee n}(\vee_i f_i), (\varphi^+)^{\vee n}(\vee_j v_j),
(\varphi^-)^{\vee n}(\vee_k g_k) \},
\end{align*}
which also holds, analogously, for the other triple product. We have proven that
$\varphi^{\vee n} \in \bAut(\cW, \langle\cdot,\cdot\rangle)$.

$i)$ Since $\cW$ is $1$-dimensional, we have $\bAut(\cW,\langle\cdot,\cdot\rangle) \simeq \bG_m$.
By Proposition~\ref{propKernelSym}, it is clear that
$\bker \widecheck{\Omega}_n = \bSL_n \cap \bAut(\cV, \langle\cdot,\cdot\rangle)$.
It is obvious that
$\bAut(\cV,\langle\cdot,\cdot\rangle) / \bker \widecheck{\Omega}_n
\lesssim \bAut(\cW,\langle\cdot,\cdot\rangle)$.

$ii)$ By Proposition~\ref{propKernelSym} we get $\bker \widecheck{\Omega}_n = \bmu_n$, and the result follows.
\end{proof}

\begin{example}
Recall from \cite{L75} that the simple Jordan pairs of type III are the Jordan subpairs of $\cV^{\text{(I)}}_{n,n}$ given by $\cV^{\text{(III)}}_{n} := (H_{n}(\FF), H_{n}(\FF))$ (here $n \in \NN$ is arbitrary and $\chr\FF \neq 2$) where $H_n(\FF)$ is the vector space of $n \times n$ symmetric matrices, and their generic trace is given by
$$ t(x,y) = t^{\text{(III)}}(x,y) := \sum_i x_{ii}y_{ii} + 2 \sum_{i < j} x_{ij} y_{ij}. $$

As in Example~\ref{example.alt}, we will use the fact that $(\cV^{\text{(I)}}_{n,m}, t) \in \MGJP$.
Consider the basis $\{ \widecheck{E}_{ij} \med i \leq j \}$ of $H(\FF)$ where $\widecheck{E}_{ij} := E_{ij} + E_{ji}$ and note that  $\widecheck{E}_{ij} = \widecheck{E}_{ji}$. 
For $i\leq j$ and $k\leq l$, it is easy to see that
$$t^{\text{(III)}}(\widecheck{E}_{ij},\widecheck{E}_{kl}) = 2( \delta_{il}\delta_{jk} + \delta_{ik}\delta_{jl} )= t^{\text{(I)}}(\widecheck{E}_{ij},\widecheck{E}_{kl})$$
so that $t^{\text{(III)}} = t^{\text{(I)}}$ on $\cV^{\text{(III)}}_n$.
 Consequently, $t^{\text{(III)}}$ (which is nondegenerate) inherits the good properties from $t^{\text{(I)}}$, so that $(\cV^{\text{(III)}}_n, t) \in \MGJP$.
  We have that
\begin{align*}
(E_{i_1 i_2} & + E_{i_2 i_1}) (E_{j_1 j_2} + E_{j_2 j_1}) (E_{k_1 k_2} + E_{k_2 k_1}) = \\
&= ( \delta_{i_2 j_1} E_{i_1 j_2} + \delta_{i_1 j_2} E_{i_2 j_1}
+ \delta_{i_2 j_2} E_{i_1 j_1} + \delta_{i_1 j_1} E_{i_2 j_2} ) (E_{k_1 k_2} + E_{k_2 k_1}) \\
&= (\delta_{i_2 j_2} \delta_{j_1 k_2} + \delta_{i_2 j_1} \delta_{j_2 k_2}) E_{i_1 k_1}
+ (\delta_{i_2 j_1} \delta_{j_2 k_1} + \delta_{i_2 j_2} \delta_{j_1 k_1}) E_{i_1 k_2} \\
& \quad + (\delta_{i_1 j_1} \delta_{j_2 k_2} + \delta_{i_1 j_2} \delta_{j_1 k_2}) E_{i_2 k_1}
+ (\delta_{i_1 j_2} \delta_{j_1 k_1} + \delta_{i_1 j_1} \delta_{j_2 k_1}) E_{i_2 k_2},
\end{align*}
and swapping the labels $i \leftrightarrow k$ we get
\begin{align*}
(E_{k_1 k_2} & + E_{k_2 k_1}) (E_{j_1 j_2} + E_{j_2 j_1}) (E_{i_1 i_2} + E_{i_2 i_1}) = \\
&= (\delta_{k_2 j_2} \delta_{j_1 i_2} + \delta_{k_2 j_1} \delta_{j_2 i_2}) E_{k_1 i_1}
+ (\delta_{k_2 j_1} \delta_{j_2 i_1} + \delta_{k_2 j_2} \delta_{j_1 i_1}) E_{k_1 i_2} \\
& \quad + (\delta_{k_1 j_1} \delta_{j_2 i_2} + \delta_{k_1 j_2} \delta_{j_1 i_2}) E_{k_2 i_1}
+ (\delta_{k_1 j_2} \delta_{j_1 i_1} + \delta_{k_1 j_1} \delta_{j_2 i_1}) E_{k_2 i_2}.
\end{align*}
Therefore, the triple products of $\cV^{\text{(III)}}_n$ are given by
\begin{align*}
\{ \widecheck{E}_{i_1 i_2}, & \widecheck{E}_{j_1 j_2}, \widecheck{E}_{k_1 k_2} \} = \\
&= (E_{i_1 i_2} + E_{i_2 i_1}) (E_{j_1 j_2} + E_{j_2 j_1}) (E_{k_1 k_2} + E_{k_2 k_1}) \\
& \quad + (E_{k_1 k_2} + E_{k_2 k_1}) (E_{j_1 j_2} + E_{j_2 j_1}) (E_{i_1 i_2} + E_{i_2 i_1}) \\
&= (\delta_{i_2 j_2} \delta_{j_1 k_2} + \delta_{i_2 j_1} \delta_{j_2 k_2}) \widecheck{E}_{i_1 k_1}
+ (\delta_{i_2 j_1} \delta_{j_2 k_1} + \delta_{i_2 j_2} \delta_{j_1 k_1}) \widecheck{E}_{i_1 k_2} \\
& \quad + (\delta_{i_1 j_1} \delta_{j_2 k_2} + \delta_{i_1 j_2} \delta_{j_1 k_2}) \widecheck{E}_{i_2 k_1}
+ (\delta_{i_1 j_2} \delta_{j_1 k_1} + \delta_{i_1 j_1} \delta_{j_2 k_1}) \widecheck{E}_{i_2 k_2},
\end{align*}
and the generic trace by
$$ t(\widecheck{E}_{i_1 i_2}, \widecheck{E}_{j_1 j_2}) = 2( \delta_{i_1 j_2}\delta_{i_2 j_1} + \delta_{i_1 j_1}\delta_{i_2 j_2}). $$
 
Now, like in Example \ref{example.alt}, consider two copies of the canonical basis $\{ e_i \}_{i=1}^n$ of $\cM_{1,n}(\FF)$, regarded as bases of the subspaces of $\cV^{\text{(I)}}_{1,n}$, and recall that
$$ t(e_i, e_j) = \delta_{ij}, $$
and
$$ \{ e_i, e_j, e_k \} = \delta_{ij} e_k + \delta_{kj} e_i. $$
 
Then $\{ e_i \vee e_j \med 1 \leq i \leq j \leq n \}$ is a basis for both vector spaces of the pair $\cV = \bigvee^2 \cV^{\text{(I)}}_{1,n}$.
Assuming $1 \leq i_1 \leq i_2 \leq n$ and $1 \leq j_1 \leq j_2 \leq n$, the bilinear form of $\cV$ is given by
\begin{align*}
\langle e_{i_1} \vee e_{i_2}, e_{j_1} \vee e_{j_2} \rangle
&= \per\big( (t(e_{i_k}, e_{j_l}))_{kl} \big) = \per \big( (\delta_{i_k j_l})_{kl} \big)= \delta_{i_1 j_1} \delta_{i_2 j_2} + \delta_{i_2 j_1} \delta_{i_1 j_2}.
\end{align*}
Let $M_{ij}$ denote the permanent $(i,j)$-minor of $B = \big( t(e_{i_k}, e_{j_l}) \big)_{kl} = \big( \delta_{i_k j_l} \big)_{kl}$.
Then $M_{11} = \delta_{i_2 j_2}$, $M_{12} = \delta_{i_2 j_1}$, $M_{21} = \delta_{i_1 j_2}$, $M_{22} = \delta_{i_1 j_1}$, and the triple products of $\cV$ are given by
\begin{align*}
\{ e_{i_1} \vee & e_{i_2}, e_{j_1} \vee e_{j_2}, e_{k_1} \vee e_{k_2} \} = \\
&=  M_{11} ( \{ e_{i_1}, e_{j_1}, e_{k_1} \} \vee e_{k_2} + e_{k_1} \vee \{ e_{i_1}, e_{j_1}, e_{k_2} \}) \\
& \quad +M_{12} ( \{ e_{i_1}, e_{j_2}, e_{k_1} \} \vee e_{k_2} + e_{k_1} \vee \{ e_{i_1}, e_{j_2}, e_{k_2} \}) \\
& \quad +M_{21} ( \{ e_{i_2}, e_{j_1}, e_{k_1} \} \vee e_{k_2} + e_{k_1} \vee \{ e_{i_2}, e_{j_1}, e_{k_2} \}) \\
& \quad +M_{22} ( \{ e_{i_2}, e_{j_2}, e_{k_1} \} \vee e_{k_2} + e_{k_1} \vee \{ e_{i_2}, e_{j_2}, e_{k_2} \}) \\
&=  \delta_{i_2 j_2} \Big( \delta_{i_1 j_1} e_{k_1} \vee e_{k_2} + \delta_{k_1 j_1} e_{i_1} \vee e_{k_2}
                        + \delta_{i_1 j_1} e_{k_1} \vee e_{k_2} + \delta_{k_2 j_1} e_{k_1} \vee e_{i_1} \Big) \\
&\quad + 
   \delta_{i_2 j_1} \Big( \delta_{i_1 j_2} e_{k_1} \vee e_{k_2} + \delta_{k_1 j_2} e_{i_1} \vee e_{k_2}
                        + \delta_{i_1 j_2} e_{k_1} \vee e_{k_2} + \delta_{k_2 j_2} e_{k_1} \vee e_{i_1} \Big) \\
&\quad + 
   \delta_{i_1 j_2} \Big( \delta_{i_2 j_1} e_{k_1} \vee e_{k_2} + \delta_{k_1 j_1} e_{i_2} \vee e_{k_2}
                        + \delta_{i_2 j_1} e_{k_1} \vee e_{k_2} + \delta_{k_2 j_1} e_{k_1} \vee e_{i_2} \Big) \\
&\quad + 
   \delta_{i_1 j_1} \Big( \delta_{i_2 j_2} e_{k_1} \vee e_{k_2} + \delta_{k_1 j_2} e_{i_2} \vee e_{k_2}
                        + \delta_{i_2 j_2} e_{k_1} \vee e_{k_2} + \delta_{k_2 j_2} e_{k_1} \vee e_{i_2} \Big) \\
&=        \Big( \delta_{i_2 j_1} \delta_{k_2 j_2} + \delta_{i_2 j_2} \delta_{k_2 j_1} \Big) e_{i_1} \vee e_{k_1}
        + \Big( \delta_{i_2 j_2} \delta_{k_1 j_1} + \delta_{i_2 j_1} \delta_{k_1 j_2} \Big) e_{i_1} \vee e_{k_2} \\
& \quad + \Big( \delta_{i_1 j_2} \delta_{k_2 j_1} + \delta_{i_1 j_1} \delta_{k_2 j_2} \Big) e_{i_2} \vee e_{k_1}
        + \Big( \delta_{i_1 j_1} \delta_{k_1 j_2} + \delta_{i_1 j_2} \delta_{k_1 j_1} \Big) e_{i_2} \vee e_{k_2} \\
& \quad	+4\Big( \delta_{i_1 j_1} \delta_{i_2 j_2} + \delta_{i_2 j_1} \delta_{i_1 j_2} \Big) e_{k_1} \vee e_{k_2}
\end{align*}
Finally, consider the tensor-shift $\cV^{[-4]}$, which has the same bilinear form as $\cV$, and triple products
\begin{align*}
\{ e_{i_1} \vee & e_{i_2}, e_{j_1} \vee e_{j_2}, e_{k_1} \vee e_{k_2} \} = \\
&=        \Big( \delta_{i_2 j_1} \delta_{k_2 j_2} + \delta_{i_2 j_2} \delta_{k_2 j_1} \Big) e_{i_1} \vee e_{k_1}
        + \Big( \delta_{i_2 j_2} \delta_{k_1 j_1} + \delta_{i_2 j_1} \delta_{k_1 j_2} \Big) e_{i_1} \vee e_{k_2} \\
& \quad + \Big( \delta_{i_1 j_2} \delta_{k_2 j_1} + \delta_{i_1 j_1} \delta_{k_2 j_2} \Big) e_{i_2} \vee e_{k_1}
        + \Big( \delta_{i_1 j_1} \delta_{k_1 j_2} + \delta_{i_1 j_2} \delta_{k_1 j_1} \Big) e_{i_2} \vee e_{k_2}.
\end{align*}
By comparison of the triple products, it follows that the pair of maps $f = (f^-, f^+)$ defined by
\begin{equation*}
f^\sigma \colon H_n(\FF) \longrightarrow \bigvee^2 \cM_{1,n}(\FF),
\quad \widecheck{E}_{ij} \longmapsto  e_i \vee e_j,
\end{equation*}
gives the following isomorphism of (generalized) Jordan pairs:
\begin{equation}
\cV^{\text{(III)}}_n \cong \Big( \bigvee^2 \cV^{\text{(I)}}_{1,n} \Big)^{[-4]}
= \Big( \bigvee^2 \cV^{\text{(I)}}_{1,n} \Big) \otimes \cV_{-4}.
\end{equation}
Unfortunately, $f$ is not an isometry of the bilinear forms. However, $f$ is a similarity with multiplier $\frac{1}{2}$, that is, $\langle f(x), f(y) \rangle = \frac{1}{2} t( x, y)$. In other words, $\cV^{\text{(III)}}_n$ and $\bigvee^2 \cV^{\text{(I)}}_{1,n}$ are isomorphic up to a tensor-shift and a similarity (simultaneously), namely $(\cV^{\text{(III)}}_n, \frac{1}{2} t) \cong ( \bigvee^2 \cV^{\text{(I)}}_{1,n}, \langle\cdot,\cdot\rangle)^{[-4]}$.
\end{example}

\textbf{Acknowledgements}
The authors are extremely thankful to the anonymous referee for finding important errors, and giving very useful comments and suggestions that have considerably improved this manuscript. We are also grateful to Ian M. Musson, for providing useful references.


\end{document}